\newtheorem{thrm}{Theorem}[section]
\newtheorem{lem}[thrm]{Lemma}
\newtheorem{cor}[thrm]{Corollary}
\newtheorem{prop}[thrm]{Proposition}
\newtheorem{conj}[thrm]{Conjecture}
\newtheorem{thrmconj}[thrm]{Theorem-Conjecture}
\theoremstyle{definition}
\newtheorem{defn}[thrm]{Definition}
\newtheorem{exmple}[thrm]{Example}
\newtheorem{rmk}[thrm]{Remark}
\newtheorem{ques}[thrm]{Question}
\DeclareMathOperator{\Eff}{\overline{Eff}}
\DeclareMathOperator{\Nef}{Nef}
\DeclareMathOperator{\Mov}{Mov}
\DeclareMathOperator{\Chow}{Chow}
\DeclareMathOperator{\Supp}{Supp}
\DeclareMathOperator{\mc}{mc}
\DeclareMathOperator{\chdim}{chdim}
\newcommand{\kapclas}{\kappa_{\mathrm{classical}}}
\begin{document}

\title{Iitaka dimension for cycles}
\author{Brian Lehmann}
\thanks{The author was partially supported by an NSA Young Investigator Grant and by NSF grant 1600875.}
\address{Department of Mathematics, Boston College  \\
Chestnut Hill, MA \, \, 02467}
\email{lehmannb@bc.edu}

\begin{abstract}
We define the Iitaka dimension of a numerical cycle class and develop its theory.  We conjecture that the Iitaka dimension is integer-valued, and give some evidence in this direction.  We focus on two cases of geometric interest: Schubert cycles on Grassmannians and cycles contracted by morphisms.
\end{abstract}

\maketitle

\section{Introduction}

Let $X$ be a projective variety over $\mathbb{C}$ and let $N_{k}(X)_{\mathbb{Z}}$ denote the group of $k$-cycle classes up to numerical equivalence.  Given a class $\alpha \in N_{k}(X)_{\mathbb{Z}}$, we define the mobility count of $\alpha$ to be
\begin{equation*}
\mc(\alpha) = \max \left\{ b \in \mathbb{Z}_{\geq 0} \, \left| \, \begin{array}{c} \textrm{any }b\textrm{ general points of } X \textrm{ are contained}  \\ \textrm{in an effective cycle of class } \alpha  \end{array} \right. \right\}.
\end{equation*}
The mobility count is analogous to the dimension of the space of sections of a divisor: for a divisor $L$ the number of general points that can be imposed on members of $|L|$ is $h^{0}(X,L)-1$.
This analogy is richer than might be expected at first sight.   \cite{lehmann16} and \cite{fl13} show that one can understand the ``positivity'' of a cycle class $\alpha$ by studying the asymptotic behavior of $\mc(m\alpha)$ as $m$ increases.

In this paper, we study the asymptotic behavior of classes on the boundary of the pseudo-effective cone.  Continuing the analogy, we define:

\begin{defn} \label{iitakadimdef}
Let $X$ be a projective variety of dimension $n$ and let $\alpha \in N_{k}(X)_{\mathbb{Z}}$.  If some positive multiple of $\alpha$ is represented by an effective cycle, we define the Iitaka dimension of $\alpha$ to be
\begin{equation*}
\kappa(\alpha) := (n-k) \sup \left\{ r \in \mathbb{R}_{\geq 0} \left| \, \limsup_{m \to \infty} \frac{\mc(m\alpha)}{m^{r}} > 0 \right. \right\}.
\end{equation*}
Otherwise, we set $\kappa(\alpha) = -\infty$.
\end{defn}

Here the term $(n-k)$ is simply a convenient rescaling factor.    
It is not hard to show that the Iitaka dimension takes values in the set
\begin{equation*}
\kappa(\alpha) \in \{ -\infty \} \cup \{ 0 \} \cup [n-k,n].
\end{equation*}
Our main goal is to analyze the possible values of the Iitaka dimension and to understand their relationship with geometry.  
Our results are motivated by the following conjecture:

\begin{conj} \label{mainconj}
Let $X$ be a projective variety and let $\alpha \in N_{k}(X)_{\mathbb{Z}}$.  Then  $\kappa(\alpha) \in \mathbb{Z}_{\geq 0} \cup \{ - \infty \}$.
\end{conj}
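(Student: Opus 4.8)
Although Conjecture~\ref{mainconj} appears out of reach in general, the following is a plausible line of attack, and one which should succeed in the geometric cases of the title. Since $\kappa(\alpha)$ depends only on the growth rate of $m \mapsto \mc(m\alpha)$, the first step is to record elementary properties of this sequence. The mobility count is superadditive along multiples of $\alpha$: if $Z_1,Z_2$ are effective cycles of classes $m_1\alpha$ and $m_2\alpha$ passing through $b_1$ and $b_2$ general points respectively (the two batches chosen general with respect to one another), then $Z_1+Z_2$ has class $(m_1+m_2)\alpha$ and passes through all $b_1+b_2$ points. Hence $\mc(m\alpha)/m$ converges, and we are in one of three cases: no multiple of $\alpha$ is effective, so $\kappa(\alpha)=-\infty$; effective cycles of class $m\alpha$ fail to cover $X$ for every $m$, so $\kappa(\alpha)=0$; or $\mc(m\alpha)$ grows at least linearly and $\kappa(\alpha)\ge n-k$. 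In the last case the content of the conjecture is that the exponent governing the growth of $\mc(m\alpha)$ is an integer in $\{n-k,\dots,n\}$.

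The second step is to recast this as a statement about dimensions of families. One would like to show that $\mc(m\alpha)$ agrees, up to an additive constant independent of $m$, with $\lfloor \dim W_m/(n-k)\rfloor$, where $W_m$ is a maximal-dimensional family of effective cycles of class $m\alpha$ inside $\Chow(X)$ whose members sweep out $X$: the inequality $\mc(m\alpha)\gtrsim \dim W_m/(n-k)$ follows by imposing general points one at a time, each of which drops the dimension of the relevant subfamily by at most $n-k$ (the universal cycle over $W_m$ has $k$-dimensional fibers and dominates $X$, so the locus of cycles through a fixed general point has codimension at most $n-k$), and the reverse inequality from the fact that a cycle through $\mc(m\alpha)+1$ general points admits no deformation sweeping out $X$ that keeps those points fixed. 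Granting such a comparison, the conjecture becomes the assertion that $\dim W_m$ grows like $c\,m^{d}$ for an integer $d$. This is genuinely subtle, since an arbitrary sequence of nonnegative integers can grow like $m^{3/2}$; one must exploit geometry.

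In the two settings of the paper this geometry is available, via a substitute for the Iitaka fibration. If $\alpha$ is contracted by a morphism $f:X\to Y$ --- equivalently, if $\alpha$ lies on the face of $\Eff_k(X)$ determined by $f$ --- one decomposes a general member of $W_m$ into the part dominating its image in $Y$ and the part contracted by $f$, and a rigidity argument (moving the cycle while controlling the dimension of its $f$-image) reduces the growth of $\dim W_m$ to a mobility count on a general fiber $F$ of $f$; one then inducts on $\dim X$, the base case ($F$ a curve, or $\alpha$ a divisor) being the classical Iitaka theorem. If $X$ is a Grassmannian and $\alpha$ a multiple of a Schubert class, transitivity of the $\mathrm{GL}$-action implies that effective cycles of class $m\alpha$ are degenerations of unions of $m$ translates of Schubert varieties, and the dimension of the subfamily passing through a maximal collection of general points can be computed combinatorially in terms of partitions, producing a piecewise-polynomial --- hence integer-degree --- formula for $\dim W_m$.

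The principal obstacle is the general case. There is no known fibration, or other geometric structure, attached to an arbitrary numerical class $\alpha$ that would force the growth exponent of $\mc(m\alpha)$ to be an integer, and the elementary bounds above only constrain $\kappa(\alpha)$ to lie in $\{-\infty,0\}\cup[n-k,n]$. Proving the conjecture in full seems to require either constructing an Iitaka-type fibration attached to a numerical cycle class, or a new argument --- perhaps a semicontinuity property of the mobility count under specialization, bounding $\dim W_m$ between $c_1 m^{d}$ and $c_2 m^{d}$ for a common integer $d$ --- that rules out fractional growth directly. I do not see how to carry either out.
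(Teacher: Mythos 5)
The statement you were asked to prove is Conjecture~\ref{mainconj}; the paper does not prove it, and neither do you, so there is no proof to compare against. Your opening reductions are consistent with what the paper actually establishes: superadditivity of $\mc(m\alpha)$ under family sums gives the trichotomy $\kappa(\alpha)\in\{-\infty\}\cup\{0\}\cup[n-k,n]$, which is exactly Proposition 3.4, and your honest conclusion that the general case is out of reach matches the paper's framing. The useful question is whether your sketched strategies for the two geometric settings would actually work, and there I see concrete problems.

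First, the Grassmannian claim is false. Transitivity of the $\mathrm{GL}$-action does \emph{not} imply that effective cycles of class $m\sigma_{\lambda}$ are degenerations of unions of $m$ Schubert translates; whether this holds is precisely the (mostly negative) content of the Schubert rigidity literature the paper cites. For instance, on $G(2,n)$ the irreducible cycles of class $m\sigma_{r}$ are the loci of lines meeting a degree-$m$ codimension-$(r+1)$ subscheme of $\mathbb{P}^{n-1}$ (Lemma~\ref{classificationofsigmar}), and these move in families of superlinear dimension: if your degeneration claim were true, every Schubert class would have the minimal Iitaka dimension $\mathrm{codim}(\sigma_{\lambda})$, contradicting Theorem~\ref{iitakadimg2n} (e.g.\ $\kappa(\sigma_{r})=n-2>r$ for $1<r<n-2$). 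The paper's computation instead rests on hard classification theorems of Bryant and Segre describing \emph{all} irreducible representatives, combined with the incidence-family bound of Lemma~\ref{familymcbound}; no combinatorial degeneration argument is available. Second, for contracted classes the paper proves the conjectural bound only when the contractibility index $c$ satisfies $c\geq k-1$ (Theorem~\ref{contractedthrm} and the adjacent lemmas), and even that case requires passing to $\mathbb{P}^{d}\times\mathbb{P}^{e}$ and running the Chow-variety argument of Lemma~\ref{familymcbound}; the case $k-c\geq 2$ remains open, so ``should succeed in the geometric cases of the title'' overstates what either you or the paper can deliver. Finally, the proposed identity $\mc(m\alpha)\approx\dim W_{m}/(n-k)$ up to an additive constant is only a heuristic: a general point need not impose exactly $n-k$ conditions on a family of $k$-cycles, and the paper never establishes such a comparison (its actual definition of $\mc$ via dominance of fiber products is designed to sidestep this).
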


This conjecture is perhaps surprising: while higher codimension cycles exhibit many pathologies not present for divisors, the conjecture predicts a cleaner picture from the viewpoint of positivity.    It is also worthwhile to study weaker variants.  For example, if we fix the dimensions $n,k$, are there only finitely many possible values of the Iitaka dimension?

If the Iitaka dimension is integer-valued, then it captures fundamental geometric information about $\alpha$, and it would be interesting to clarify this geometric input.  The following question is related to a conjecture of \cite{voisin10}.

\begin{ques}
Let $X$ be a smooth projective variety and let $\alpha \in N_{k}(X)_{\mathbb{Z}}$.  As we let $W$ vary over all effective cycles with class proportional to $\alpha$ which contain and are smooth at a fixed very general point $p$, is $\kappa(\alpha)$ determined by the set of tangent planes $T_{p}W \subset T_{p}X$?
\end{ques}

\begin{exmple}
Let $X$ be a smooth projective variety and $D$ be a Cartier divisor on $X$.  The usual Iitaka dimension of $D$ can only take integer values (see \cite{iitaka70}, \cite{iitaka71}).  Section \ref{divisorsec} shows that the result remains true in the numerical setting: for a Weil divisor class $\alpha$ on any projective variety $X$, $\kappa(\alpha) \in \{ - \infty, 0, 1, \ldots, \dim X \}$.
\end{exmple}

\begin{exmple}
Let $X$ be a projective variety of dimension $n$ and let $\alpha \in N_{k}(X)_{\mathbb{Q}}$ be any class.  \cite{lehmann16} shows that $\kappa(\alpha)$ attains its maximum value $n$ if and only if $\alpha$ lies in the interior of the pseudo-effective cone (in which case we say that $\alpha$ is big).  In fact more is true: there is a constant $\epsilon_{n,k} > 0$ such that $\kappa(\alpha)$ can not take values in the set $(n-\epsilon_{n,k},n)$, showing ``discreteness'' of the Iitaka dimension in a small neighborhood of $n$.
\end{exmple}

\begin{exmple} \label{mobilityofcurves}
Let $\alpha$ be a curve class on a projective variety $X$ of dimension $n$.  \cite[Theorem 2.4]{8authors} shows that if two general points of $X$ can be connected by an effective cycle with class proportional to $\alpha$, then $\alpha$ is big.  Thus there are four distinct behaviors for the Iitaka dimension of $\alpha$:
\begin{enumerate}
\item $\kappa(\alpha) = -\infty$.  By definition this happens when no positive multiple of $\alpha$ is represented by an effective cycle.
\item $\kappa(\alpha) = 0$.  This occurs when no positive multiple of $\alpha$ is represented by a curve through a very general point of $X$.  In particular $\mc(m\alpha) = 0$ for every $m>0$.
\item $\kappa(\alpha) = n-1$.  This occurs when there is an effective cycle of class proportional to $\alpha$ through one general point of $X$, but two general points can not be connected by a chain of such cycles.  Then the quotient theory of \cite{campana81} and \cite{kmm92} yields a rational map $g: X \dashrightarrow Z$ with $\dim Z > 0$ contracting all such curves through very general points.  In particular, there must be a positive constant $C$ such that $\mc(m\alpha) = Cm$ for every sufficiently divisible $m$.
\item $\kappa(\alpha) = n$.  By \cite{8authors}, the only other possibility is that $\alpha$ is big and that $\mc(m\alpha)$ has the maximal possible growth rate.
\end{enumerate}
\end{exmple}

We first prove some general results in support of Conjecture \ref{mainconj}, for example:

\begin{prop}
Let $X$ be a smooth projective variety of dimension $n \geq 3$.  Suppose that $\alpha \in \Eff_{n-2}(X)$ is an extremal ray and that there is an ample divisor $i: A \hookrightarrow X$ such that $\alpha \not \in A \cdot N^{1}(X)$.  Then $\kappa(\alpha) \leq n-1$.
\end{prop}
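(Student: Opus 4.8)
The value $\kappa(\alpha)=n$ is excluded at once. By the theorem of \cite{lehmann16} recalled above, $\kappa(\beta)=n$ holds precisely when $\beta$ lies in the interior of $\Eff_{n-2}(X)$; and the hypothesis $\alpha\notin A\cdot N^{1}(X)$ rules out $\dim N_{n-2}(X)=1$ (already $A^{2}\neq 0$ lies in $A\cdot N^{1}(X)$), so $\Eff_{n-2}(X)$ is a full-dimensional cone that is not a ray, and its extremal ray $\mathbb{R}_{\geq 0}\alpha$ lies on its boundary; hence $\alpha$ is not big. Since $\kappa$ takes values in $\{-\infty\}\cup\{0\}\cup[2,n]$ here (as $n-k=2$), it remains only to rule out $\kappa(\alpha)\in(n-1,n)$. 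So assume, for contradiction, that $\kappa(\alpha)>n-1$: then there is a real number $r>\tfrac{n-1}{2}$ with $\limsup_{m}\mc(m\alpha)/m^{r}>0$, so along some sequence $m_{j}\to\infty$ an effective cycle of class $m_{j}\alpha$ passes through at least $c\,m_{j}^{\,r}$ general points of $X$, for a fixed $c>0$.

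First I would normalize. Replacing $A$ by a multiple changes neither the subspace $A\cdot N^{1}(X)\subseteq N_{n-2}(X)$ (because $(dA)\cdot D=A\cdot(dD)$) nor the hypothesis $\alpha\notin A\cdot N^{1}(X)$, so we may assume $A$ is very ample; and since $n\geq 3$, Hard Lefschetz makes $\cdot A\colon N^{1}(X)\to N_{n-2}(X)$ injective. Let $Y=H_{1}\cap\dots\cap H_{n-3}$ be a general complete intersection of members of $|A|$, a smooth projective threefold, with inclusion $i\colon Y\hookrightarrow X$ (take $Y=X$ when $n=3$). All the intermediate sections have dimension $\geq 3$, so the Grothendieck--Lefschetz theorem gives an isomorphism $i^{*}\colon N^{1}(X)\xrightarrow{\ \sim\ }N^{1}(Y)$, and $i^{*}$ preserves nefness. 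Intersecting a general effective cycle of class $m\alpha$ with $Y$ yields an effective curve of class $m\,i^{*}\alpha$ on $Y$, so $i^{*}\alpha$ is pseudo-effective on $Y$.

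The idea is to transfer the growth of $\mc(m\alpha)$ down to $Y$ and then apply the curve dichotomy. For the transfer I would prove an easy-addition-type restriction inequality for the cycle Iitaka dimension,
\begin{equation*}
\kappa_{X}(\alpha)\ \leq\ \kappa_{Y}(i^{*}\alpha)+(n-3),
\end{equation*}
the analogue of $\kappa_{X}(D)\leq\kappa_{H}(D|_{H})+1$ for divisors, proved by filtering effective cycles of class $m\alpha$ according to their order of contact with the successive hyperplanes $H_{1},\dots,H_{n-3}$. Granting this, $\kappa_{Y}(i^{*}\alpha)>n-1-(n-3)=2$, so by Example \ref{mobilityofcurves}, i.e.\ by \cite{8authors}, the curve class $i^{*}\alpha$ is big on the threefold $Y$. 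It then remains to see that bigness of a general threefold restriction of $\alpha$ is incompatible with $\alpha$ being an extremal ray of $\Eff_{n-2}(X)$ lying outside $A\cdot N^{1}(X)$; this is where the two hypotheses must be combined.

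This last incompatibility is the main obstacle. The extremality of $\alpha$ furnishes a nonzero $\phi\in N^{n-2}(X)$ that is nonnegative on $\Eff_{n-2}(X)$, satisfies $\phi\cdot\alpha=0$, and supports the ray at $\alpha$; since $N^{1}(Y)=i^{*}N^{1}(X)$ by Grothendieck--Lefschetz, one would try to pair $\phi$ against the classes $A^{n-3}\cdot D$ ($D\in N^{1}(X)$) and use that a big curve class on $Y$ meets every nonzero nef divisor positively, in order to force $\phi$ to be ``transverse to $A$'' --- lying in $A^{n-3}\cdot N^{1}(X)$ --- and hence, by nondegeneracy of the numerical pairing, that $\alpha\in A\cdot N^{1}(X)$, a contradiction. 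A more hands-on alternative, likely more robust, would argue with the cycles themselves: bigness of $i^{*}\alpha$ on $Y$, applied to the families of cycles of class $m\alpha$ realizing the mobility growth, should force these cycles to be swept out by a positive-dimensional family of divisors of a single numerical class $D$, with $\alpha$ numerically a multiple of $D$ times an effective class; extremality of $\alpha$ then determines $D$ up to scaling, and the resulting shape of $\alpha$ contradicts $\alpha\notin A\cdot N^{1}(X)$. Finally, the restriction inequality used above is itself a nontrivial point --- effective cycles through general points of $X$ need not restrict to cycles through general points of $Y$, so the argument must filter carefully by contact order with the $H_{\bullet}$ --- and it should be isolated as a lemma.
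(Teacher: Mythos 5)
Your proposal has two genuine gaps, and each sits at a load-bearing point. First, the restriction inequality $\kappa_{X}(\alpha)\leq\kappa_{Y}(i^{*}\alpha)+(n-3)$ is asserted, not proved, and it is not a routine analogue of the divisor case: the divisor inequality comes from the restriction exact sequence $0\to\mathcal{O}(mD-H)\to\mathcal{O}(mD)\to\mathcal{O}(mD|_{H})$, and there is no cohomological mechanism of this kind for the mobility count. Worse, the quantity $\kappa_{Y}(i^{*}\alpha)$ is not even well-defined as stated: a numerical class in $N_{n-2}(X)$ does not determine a class in $N_{1}(Y)$ (the map $N_{1}(Y)\to N_{1}(X)$ need not be injective, and different effective representatives of $m\alpha$ could cut out non-proportional curve classes on $Y$), and one must also rule out components of the cycles lying inside the hyperplanes before restricting at all --- which is precisely where the hypotheses on $\alpha$ have to enter, whereas you state the lemma as a general fact independent of them. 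Second, the concluding step --- that bigness of the restricted curve class on $Y$ contradicts extremality of $\alpha$ together with $\alpha\notin A\cdot N^{1}(X)$ --- is explicitly left open; you offer two speculative strategies and carry out neither, so the proof has no ending.

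For comparison, the paper's argument is a single, much shorter cut. Take $H$ a very general member of $|qA|$. By the Lefschetz hyperplane theorem any effective divisor $Z\subset H$ has class in $A\cdot N^{1}(X)$, so by extremality of $\alpha$ and the hypothesis $\alpha\notin A\cdot N^{1}(X)$, the class $m\alpha-[Z]$ is never pseudo-effective; hence no member of a family representing $m\alpha$ has a component inside $H$. One may therefore intersect the whole family with $H$ to get a family $p_{H}$ of $(n-3)$-cycles on $H$ of class $m\alpha\cdot H$ with $\mc_{X}(p)\leq\mc_{H}(p_{H})$, and then the trivial upper bound for mobility counts on the $(n-1)$-dimensional variety $H$ (the class of $p_{H}$ grows linearly in $m$) gives $\mc(m\alpha)\leq Cm^{(n-1)/2}$, i.e.\ $\kappa(\alpha)\leq n-1$. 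No curve dichotomy, no iterated restriction, and the hypotheses are used exactly once, to keep the cycles out of $H$. If you want to salvage your approach, the honest route is to prove that one-hyperplane restriction statement; iterating down to a threefold buys you nothing that the dimension bound on $H$ does not already give.
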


We then focus on two specific situations: classes contracted by morphisms, and Schubert classes on Grassmannians.  These examples can be seen as prototypes of arbitrary boundary classes, and so are particularly interesting as indicators of what to expect in general.

\subsection{Grassmannians}

Suppose that $X = G(m,n)$ is a Grassmannian of $m$ planes in an $n$-dimensional vector space and $\alpha$ is a Schubert class on $X$.  Given a non-increasing tuple of integers $\lambda = (\lambda_{1},\ldots,\lambda_{m})$ whose components $\lambda_{i}$ satisfy $0 \leq \lambda_{i} \leq n-m$, we let $\sigma_{\lambda}$ denote the class of the Schubert variety parametrizing linear subspaces $W$ whose dimension of intersection with the members of a fixed full flag $V_{\bullet}$ are determined by
\begin{equation*}
\dim(W \cap V_{n-m+i-\lambda_{i}}) \geq i.
\end{equation*}

We focus on the easiest case $G(2,n)$, where we can give a complete description of the Iitaka dimension for Schubert classes.

\begin{thrm} \label{iitakadimg2nintro}
The Iitaka dimension of a Schubert cycle on $G(2,n)$ is determined by the following list:
\begin{itemize}
\item $\kappa(\sigma_{1}) = \kappa(\sigma_{n-2,n-3}) = 2(n-2)$.
\item $\kappa(\sigma_{r}) = n-2$ for $1 < r \leq n-2$.
\item $\kappa(\sigma_{r,r-1}) = 2r$ for $1 < r < n-2$.
\item $\kappa(\sigma_{r,s}) = r+s$ otherwise.
\end{itemize}
\end{thrm}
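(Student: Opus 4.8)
The statement is a computation of the growth rate of $\mc(m\sigma_\lambda)$ for each Schubert class on $G(2,n)$, so the plan is to treat the four regimes separately and, in each, to establish matching upper and lower bounds on this growth. The two big classes need nothing new: $\sigma_1$ is the ample Plücker class, and $\sigma_{n-2,n-3}$ is the generator of $N_1(G(2,n))_{\mathbb{Z}}$ and hence lies in the interior of the one-dimensional effective cone of curves, so both are big and $\kappa = \dim G(2,n) = 2(n-2)$ by the cited result of \cite{lehmann16}. For the remaining classes I would bound $\mc(m\sigma_\lambda)$ from below and from above by separate arguments.

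For the lower bounds, the elementary input is that the $\mathrm{GL}_n$-translates of a Schubert variety of class $\sigma_\lambda$ form an irreducible family sweeping out $G(2,n)$, so a sum of $m$ general translates passes through $\gtrsim m$ general points; this gives $\kappa(\sigma_\lambda)\geq\codim\sigma_\lambda$ and is already sharp for $\sigma_r$ with $r=n-2$ and in the ``otherwise'' case. The superlinear cases need non-Schubert movable representatives. For $\sigma_r$ with $1<r<n-2$ I would use the incidence cycles $I_Y=\{W:\mathbb{P}(W)\cap Y\neq\emptyset\}$ attached to closed subvarieties $Y\subset\mathbb{P}(V)$ of dimension $n-2-r$: a Schubert-calculus computation shows $[I_Y]=(\deg Y)\,\sigma_r$ (the coefficient of each $\sigma_{a,b}$ with $b\geq 1$ vanishes on dimension grounds, while the coefficient of $\sigma_{r,0}$ is $\deg Y$). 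Taking $Y$ inside a fixed hyperplane $\mathbb{P}^{n-2}\subset\mathbb{P}(V)$ — so that the line through a general point of $G(2,n)$ meets the hyperplane and the incidence condition on $I_Y$ becomes ``$Y$ passes through a prescribed point'' — and letting $Y$ range over a maximal-dimensional family of degree-$\delta$, codimension-$r$ subvarieties of $\mathbb{P}^{n-2}$ (of dimension of order $\delta^{(n-2)/r}$, realized by subvarieties lying on a hypersurface of degree about $\delta^{1/r}$) yields $\mc(\delta\sigma_r)\gtrsim\delta^{(n-2)/r}$, i.e.\ $\kappa(\sigma_r)\geq n-2$. For $\sigma_{r,r-1}$ with $1<r<n-2$ I would run a parallel construction with ``Fano-of-hypersurface'' cycles $\{W:\mathbb{P}(W)\subset Z\}$, using the factorization $\sigma_{r,r-1}=\sigma_1\cdot\sigma_{1,1}^{r-1}$ and Chern-class identities such as $c_3(\Sym^2 S^\vee)=4\sigma_{2,1}$ ($S$ the tautological subbundle) to control the classes; after a similar optimization of $\deg Z$ against the dimension of the family of such $Z$ this should give the exponent $2r/(2r-1)$, though identifying the optimal family of cycles here is the delicate point of the lower bound.

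The matching upper bound is the crux and the main obstacle: I would need that every effective cycle of class $m\sigma_\lambda$ passes through at most $O\!\big(m^{\kappa(\sigma_\lambda)/\codim\sigma_\lambda}\big)$ general points. First reduce to irreducible cycles — one checks that for these $\lambda$ the class $\sigma_\lambda$ spans an extremal ray of $\Eff_\bullet(G(2,n))$, so every component of such a cycle has class a positive multiple of $\sigma_\lambda$, and since the target exponent is $\geq 1$ the per-component bounds sum correctly. For an irreducible cycle $Z$ of class $c\sigma_\lambda$, the number of general points on $Z$ is at most $\tfrac{1}{\codim\sigma_\lambda}$ times the dimension of the component of $\Chow(G(2,n))$ through $[Z]$, so everything reduces to bounding the dimension of the Chow component parametrizing cycles of class $c\sigma_\lambda$. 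For $\sigma_{d,d}$ this is the rigidity of the sub-Grassmannian classes (every such cycle is a union of translates of a sub-Grassmannian $G(2,B)$, a family of dimension linear in $c$). For $\sigma_r$ and $\sigma_{r,r-1}$ the heart of the matter is a structure theorem showing that, up to a bounded-dimensional choice, such cycles are of the incidence/Fano form used in the lower bound, so that the dimension estimate matches the construction. Establishing this optimality — understanding exactly which subvarieties of $G(2,n)$ can carry a given Schubert class — is where the real content of the theorem lies; the divisor case and the quoted result on classes near the top of $\Eff$ make the clean answer plausible, but the argument requires genuine control of these Chow varieties.
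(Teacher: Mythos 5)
Your skeleton agrees with the paper's in outline (matching upper and lower bounds in each regime, bigness for $\sigma_{1}$ and $\sigma_{n-2,n-3}$, and the incidence cycles $I_{Y}$ with $Y$ placed inside a hyperplane for the lower bound on $\sigma_{r}$), but there are two genuine gaps. The serious one is that the upper bounds are asserted rather than proved: you correctly identify that one needs a structure theorem classifying the irreducible subvarieties whose class is a multiple of $\sigma_{\lambda}$, and then stop, noting that this is where the real content lies. That content is exactly what the paper supplies. For $\sigma_{r}$ it proves (Lemma \ref{classificationofsigmar}) that every irreducible cycle of class $m\sigma_{r}$ is the locus of lines meeting a fixed codimension-$(r+1)$, degree-$m$ subscheme of $\mathbb{P}^{n-1}$, by intersecting with the cone of lines through a general point, invoking Bryant's classification for $\sigma_{n-2,r}$, and showing the resulting subschemes $Q_{p}$ do not depend on $p$. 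For $\sigma_{r,r-1}$ it quotes Segre's classification (such cycles parametrize the lines in the fibers of a one-dimensional family of $\mathbb{P}^{n-r-1}$'s, or the lines on a quadric in a subplane). For $\sigma_{r,s}$ with $1\le s<r-1$ --- a case your upper-bound discussion omits entirely --- it proves by induction on $r$ (the lemma following Lemma \ref{grassmanniancycledim}) that the swept locus is either degenerate or every parametrized line meets a fixed $(n-2-r)$-plane, which caps the number of general points at a linear function of $m$. Once the classification is in hand, the quantitative bound comes from Lemma \ref{familymcbound} (how many general members of a fixed flat family, e.g.\ lines in $\mathbb{P}^{n-1}$, a bounded-degree family can meet); your alternative route via $b\le \dim W/(n-k)$ and Chow dimensions is plausible in principle but would still require degree-versus-dimension bounds for the relevant Chow varieties.

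Second, your lower-bound construction for $\sigma_{r,r-1}$ fails beyond $r=2$. The family of lines on a degree-$d$ hypersurface has class $c_{d+1}(\Sym^{d}S^{\vee})$; matching codimensions forces $d=2r-2$, and already for $r=3$ one computes $c_{5}(\Sym^{4}S^{\vee})=96\,\sigma_{4,1}+320\,\sigma_{3,2}$, which is not proportional to $\sigma_{3,2}$. The correct family, dictated by Segre's classification, is the locus of lines contained in the members of a one-parameter family of $(n-r-1)$-planes: if the corresponding curve in $G(n-r,n)$ has degree $m$, the cycle has class $m\sigma_{r,r-1}$, and passing through a general point of $G(2,n)$ translates into meeting a general Schubert variety of class $\sigma_{r,r}$ in $G(n-r,n)$. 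Taking the curve of degree $m$ inside a $2r$-dimensional complete intersection of $G(n-r,n)$ through roughly $m^{2r/(2r-1)}$ general points then produces the exponent $2r/(2r-1)$, i.e.\ $\kappa(\sigma_{r,r-1})\ge 2r$.
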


In particular, the Iitaka dimension is usually the smallest possible value.


\begin{rmk}
The study of the Iitaka dimension for Schubert classes is closely related to the differential-geometric notion of Schubert rigidity developed in the series of papers \cite{walters97}, \cite{bryant05}, \cite{hong05}, \cite{hong07}, \cite{coskun11}, \cite{rt12}, \cite{robles13}, \cite{cr13}, \cite{coskun14}.

A Schubert class $\sigma$ is called multi rigid if the only effective cycles with class proportional to $\sigma$ are sums of Schubert varieties.  Note that any multi rigid class automatically has the minimal possible Iitaka dimension $\kappa(\sigma) = \mathrm{codim}(\sigma)$.  By comparison, the calculation of the Iitaka dimension yields a weaker conclusion but provides interesting geometric information for every class.
\end{rmk}

\subsection{Contracted classes}

We next turn to classes contracted by morphisms.  We say that $\alpha \in \Eff_{k}(X)$ is a $\pi$-contracted class if $\pi: X \to Z$ is a morphism such that $\pi_{*}\alpha = 0$.  If $\dim(Z) \leq k$ then any contracted class must lie on the boundary of the pseudo-effective cone.  Using the geometry of $\pi$ we can expect to obtain bounds on the Iitaka dimension of $\alpha$.  

\begin{exmple} \label{p2timesp2one}
Let $X = \mathbb{P}^{2} \times \mathbb{P}^{2}$.  Let $A$ and $H$ denote the pullback of the hyperplane class from the first and second factors respectively.  Then $\Eff_{2}(X)$ is a simplicial cone with generators $H^{2}, H \cdot A, A^{2}$.  The Iitaka dimensions of the non-zero boundary classes are determined by the extremal face:
\begin{enumerate}

\item $\alpha \in \mathbb{R}_{\geq 0} H^{2} + \mathbb{R}_{\geq 0} A^{2}$.  Each component of a cycle representing $m\alpha$ is a fiber of one of the projection maps and can contain at most one general point.  Thus $\kappa(\alpha) = 2$.  

\item $\alpha \in \mathbb{R}_{>0} (H \cdot A)$.  An irreducible cycle representing $m(A \cdot H)$ maps to a curve under each projection.  If the degrees of these curves are $d_{1}$ and $d_{2}$ the cycle can go through at most 
\begin{equation*}
\min \left\{ \left( \begin{array}{c} d_{1} + 2 \\ 2 \end{array} \right) - 1, \left( \begin{array}{c} d_{1} + 2 \\ 2 \end{array} \right) - 1 \right\} \approx \frac{1}{2} \min \{ d_{1}^{2} ,d_{2}^{2} \}
\end{equation*}
general points.  Since $d_{1}d_{2} = m$, the maximum possible bound occurs when $d_{1} \approx d_{2} \approx m^{1/2}$.  It is then not hard to show that $\kappa(\alpha) = 2$. 

\item $\alpha \in \mathbb{R}_{>0} H^{2} + \mathbb{R}_{>0} (H \cdot A)$ or its involutive face.  Theorem \ref{contractedthrm} shows that $\kappa(\alpha) = 3$.  We describe how to construct cycles achieving this growth rate; showing that this rate is the optimal one is somewhat harder.

Since any two classes in this face have comparable mobility count growth rates, it suffices to construct a single example $\alpha$ with $\kappa(\alpha) \geq 3$.  Let $\phi: \tilde{X} \to X$ be the blow-up along a fiber of the first projection map.  This variety admits a map $g: \tilde{X} \to \mathbb{P}^{2} \times \mathbb{P}^{1}$.  Let $\beta$ be a complete intersection curve on $\mathbb{P}^{2} \times \mathbb{P}^{1}$, so that $\mc(m\beta) \sim Cm^{3/2}$.  Then $g^{*}\beta$ is a surface class whose mobility count achieves the same growth rate, and its pushforward $\alpha = \phi_{*}g^{*}\beta$ has $\kappa(\alpha) \geq 3$.  Furthermore $\alpha$ lies in the interior of the desired face.
\end{enumerate}
\end{exmple}

The most useful framework for discussing contracted classes in general was set up by \cite{fl15} (see also \cite{cc15}).

\begin{defn}
Let $\pi: X \to Z$ be a surjective morphism of projective varieties and let $\alpha \in \Eff_{k}(X)$.  Fix an ample divisor $A$ on $Z$.  The $\pi$-contractibility index of $\alpha$ is defined to be the largest non-negative integer $c \leq k$ such that $\alpha \cdot \pi^{*}A^{k-c+1}=0$.  This definition is independent of the choice of $A$.
\end{defn}

The expected behavior of the Iitaka dimension for a contracted cycle $\alpha$ depends on the contractibility index.

\begin{thrmconj} \label{contractedconj}
Let $X$ be a projective variety of dimension $n$.  Suppose that $\pi: X \to Z$ is a surjective morphism of projective varieties of relative dimension $e$ and suppose $\alpha \in \Eff_{k}(X)_{\mathbb{Z}}$ has $\pi$-contractibility index $c$.   Then:
\begin{description}
\item[Theorem] If $c > e$, then $\kappa(\alpha) \leq 0$.
\item[Theorem] If $c = e$, then $\kappa(\alpha) \leq \dim $.
\item[Conjecture] If $k-\dim Z < c < e$, then $\kappa(\alpha) \leq n-c$.
\end{description}
\end{thrmconj}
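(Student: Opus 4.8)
The plan is to establish the two ``Theorem'' assertions; the ``Conjecture'' assertion appears to require a substantially more refined argument, discussed at the end. We may assume that some positive multiple of $\alpha$ is represented by an effective cycle, as otherwise $\kappa(\alpha)=-\infty$. Write $\dim Z=n-e$, fix a very ample divisor $H$ on $X$, an ample divisor $A$ on $Z$, and an integer $C_0$ with $C_0H-\pi^*A$ ample, and let $\deg_A,\deg_H$ denote the corresponding degrees. In the second assertion we may assume $e\ge 1$, since for $e=0$ the bound $\kappa(\alpha)\le n$ is automatic; in both assertions $c\ge 1$.

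First I would analyze the components of an arbitrary effective representative $W=\sum_i a_iW_i$ ($W_i$ irreducible, $a_i\in\mathbb{Z}_{>0}$) of a multiple $m\alpha$. From $\alpha\cdot\pi^*A^{k-c+1}=0$ we get $\sum_i a_i\,[W_i]\cdot\pi^*A^{k-c+1}=0$. Each summand $[W_i]\cdot\pi^*A^{k-c+1}$ is pseudo-effective (an effective cycle intersected with powers of the nef divisor $\pi^*A$ stays pseudo-effective) and is represented by a pure $(c-1)$-dimensional effective cycle, so it pairs non-negatively with $H^{c-1}$; since $\sum_i a_i\bigl([W_i]\cdot\pi^*A^{k-c+1}\cdot H^{c-1}\bigr)=0$, each such pairing is $0$, and hence each $[W_i]\cdot\pi^*A^{k-c+1}=0$. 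If $\dim\pi(W_i)\ge k-c+1$, then intersecting $W_i$ with $k-c+1$ general members of $|\pi^*A|$ produces a non-empty effective cycle of pure dimension $c-1$, contradicting this vanishing; hence $\dim\pi(W_i)\le k-c$ for all $i$, so the general fibre of $\pi|_{W_i}$ has dimension at least $c$. When $c>e$, this dimension exceeds $e$, which forces the general point of $\overline{\pi(W_i)}$ into the locus $Z_{>e}:=\{z\in Z:\dim\pi^{-1}(z)>e\}$, a proper closed subset of $Z$ by upper semicontinuity of fibre dimension (the general fibre of $\pi$ has dimension $e$). Thus every component of every effective representative of every multiple of $\alpha$ lies in $\pi^{-1}(Z_{>e})$, which contains no general point of $X$; hence $\mc(m\alpha)=0$ for all $m$, so $\kappa(\alpha)\le 0$.

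For the case $c=e$, every component now has $d_i:=\dim\pi(W_i)\le k-e<\dim Z$; put $Y_i:=\overline{\pi(W_i)}$. Suppose $b$ general points $p_1,\dots,p_b$ of $X$ lie on an effective cycle $W=\sum_i a_iW_i$ of class $m\alpha$. Their images $q_1,\dots,q_b$ form a general point of $Z^b$, and each $q_j$ lies on some $Y_{i(j)}$. A routine degree estimate --- using the projection formula, the ampleness of $C_0H-\pi^*A$, and the fact that $\pi_*\bigl([W_i]\cdot H^{k-d_i}\bigr)$ is a positive multiple of $[Y_i]$ --- gives $\sum_i a_i\deg_A Y_i\le C_0^{\,k}(\alpha\cdot H^k)\,m=:C_1m$. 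Partition $\{1,\dots,b\}$ by sending each $j$ to an index $i$ with $q_j\in Y_i$, and let $b_i$ be the number sent to $i$; the $b_i$ assigned points form a general point of $Z^{b_i}$ and lie on the effective cycle $a_iY_i$, so by properness of the relevant Chow variety $b_i\le\mc_Z(a_i[Y_i])$. The foundational estimates on mobility counts (see \cite{lehmann16}) supply a constant $C_2$, independent of $i$, with
\begin{equation*}
\mc_Z(a_i[Y_i])\;\le\;C_2\bigl(a_i\deg_A Y_i\bigr)^{\frac{n-e}{(n-e)-d_i}}\;\le\;C_2\bigl(a_i\deg_A Y_i\bigr)^{\frac{n-e}{n-k}},
\end{equation*}
using $d_i\le k-e$. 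Summing over $i$ and invoking $\sum_i x_i^{\rho}\le\bigl(\sum_i x_i\bigr)^{\rho}$ for $x_i\ge 0$ and $\rho:=\tfrac{n-e}{n-k}\ge 1$, we obtain $b\le C_2(C_1m)^{\rho}$. Since $b$ may be taken equal to $\mc(m\alpha)$, this gives $\mc(m\alpha)=O\bigl(m^{(n-e)/(n-k)}\bigr)$, hence $\kappa(\alpha)\le(n-k)\cdot\tfrac{n-e}{n-k}=n-e$.

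I expect the main obstacle to be the uniform upper bound for mobility counts on $Z$ used above: one needs a constant $C=C(Z,A,d)$ with $\mc_Z(\gamma)\le C(\gamma\cdot A^d)^{\dim Z/(\dim Z-d)}$ for every effective class $\gamma$ of dimension $d$ on $Z$ (only the values $d\in\{0,\dots,k-e\}$ occur). If this is not already available in the form needed, one would prove it by embedding $Z$ into projective space via $A$, using the fact that a $d$-dimensional subvariety of $\mathbb{P}^N$ through $b$ general points has degree $\gtrsim b^{(N-d)/N}$, and exploiting the convexity of the mobility count to pass from irreducible cycles to arbitrary effective ones. As for the ``Conjecture'' case $k-\dim Z<c<e$: the first step still yields $\dim\pi(W_i)\le k-c$ and shows $W_i\cap\pi^{-1}(y)$ is a $(\ge c)$-dimensional cycle inside the $e$-dimensional general fibre, but running the estimate above crudely only gives $\kappa(\alpha)\le(n-k)(n-e)/(n-k+c-e)$, which is strictly larger than $n-c$ when $c<e$. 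Closing this gap should require a genuinely fibred argument bounding $\mc(m\alpha)$ by a product of a mobility count on $Z$ (for the $(k-c)$-dimensional images $Y_i$) and a mobility count inside the general fibre (for the $(\ge c)$-dimensional slices $W_i\cap\pi^{-1}(y)$); making this precise is the heart of the conjecture.
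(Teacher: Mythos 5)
Your argument is correct and follows essentially the same route as the paper: for $c>e$ you re-derive directly the rigidity statement that the paper imports from \cite{fl15} (every component of an effective representative has image of dimension $\le k-c$, hence lies over the proper closed locus of jumping fibre dimension), and for $c=e$ you reproduce the paper's naive bound of Lemma~\ref{naivebound} by passing to the image cycles on $Z$, bounding their degrees linearly in $m$, and combining the mobility-count estimates of \cite{lehmann16} with convexity over components. The only real difference is that the paper obtains the $c=e$ case as the endpoint of the general estimate $\kappa(\alpha)\le (n-k)\,d/(d-k+c)$ for all $k-\dim Z<c\le e$, which is precisely the ``crude'' bound you observe is too weak for the conjectural range $c<e$.
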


The transition in behavior from $c > e$ to $c \leq e$ has the following geometric explanation.  \cite{fl15} defines a pseudo-effective class $\alpha \in N_{k}(X)$ to be ``$\pi$-exceptional'' if the $\pi$-contractibility index for $\alpha$ is larger than the relative dimension of $\pi$.  
It then shows that $\pi$-exceptional classes are ``rigid'' in a strong sense, and in particular can not contain a general point of $X$.   

We prove two statements in the direction of Conjecture \ref{contractedconj} for $c<e$.  First, we show that the conjectural upper bound cannot be improved: for any morphism $\pi: X \to $, there exists a class $\alpha \in \Eff_{k}(X)$ of contractibility index $c$ achieving the stated bound on the Iitaka dimension.  Second, we prove Conjecture \ref{contractedconj} when $k-c$ is at most $1$.  In particular:

\begin{thrm}
Conjecture \ref{contractedconj} holds if either
\begin{itemize}
\item $X$ has dimension $\leq 4$, or
\item $k \leq 2$.
\end{itemize}
\end{thrm}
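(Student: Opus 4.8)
The strategy is to reduce the theorem to the already-proven cases of Theorem-Conjecture~\ref{contractedconj}, namely $c \geq e$, together with the case $k - c \leq 1$ that we establish separately. So I first dispense with $c > e$ (Iitaka dimension $\leq 0$) and $c = e$ (Iitaka dimension $\leq \dim$, here meaning $\dim Z$) by direct appeal. It remains to handle $k - \dim Z < c < e$. The point is that in the two listed regimes --- $n \leq 4$ or $k \leq 2$ --- the inequalities $k - \dim Z < c < e \leq k \leq n$ force $k - c$ to be small, so that the separate result ``Conjecture~\ref{contractedconj} holds when $k - c \leq 1$'' applies.

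For the case $k \leq 2$: if $c < e$ then $c \leq k - 1 \leq 1$, and the constraint $c < e$ with $e \leq k \leq 2$ leaves only the possibilities $(k,c,e) = (2,0,1), (2,0,2), (2,1,2), (1,0,1)$. In each of these $k - c$ is $1$ or $2$; I need to check that $k - c \leq 1$ except possibly for $(2,0,2)$ and $(2,0,1)$ where $k - c = 2$. But $(2,0,2)$ has $c = 0$ and the hypothesis $k - \dim Z < c$ reads $2 - \dim Z < 0$, i.e. $\dim Z > 2$, which is compatible, so I cannot simply discard it via the $k-c\le 1$ result. The resolution is that when $c = 0$ the contractibility index condition $\alpha \cdot \pi^* A^{k+1} = 0$ is automatic (as $A^{k+1}$ has the wrong dimension), so $c = 0$ carries no information and the bound $\kappa(\alpha) \leq n - c = n$ is vacuous. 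Thus $c = 0$ cases are trivially true, and all remaining cases have $c \geq 1$, hence $k - c \leq 1$, so the separate result finishes them.

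For the case $n \leq 4$: here $k \leq n \leq 4$ and $e \leq k$. If $c = 0$ the bound is vacuous as above. If $c \geq 1$, combined with $c < e \leq k \leq 4$, the only way to get $k - c \geq 2$ is $(k,c) \in \{(3,1),(4,1),(4,2)\}$ with $e$ strictly between $c$ and forced by $e \leq k$. The cases $(3,1)$ needs $e = 2$ or $3$; $(4,1)$ needs $e \in \{2,3,4\}$; $(4,2)$ needs $e \in \{3,4\}$. These genuinely fall outside $k - c \leq 1$, so the plan must handle them directly. I expect this to be the main obstacle: one must prove $\kappa(\alpha) \leq n - c$ when $k - c = 2$ or $3$ in low dimension. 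The approach here is to bound the mobility count of $m\alpha$ by restricting to a general complete intersection of $c$ divisors pulled back from $Z$ --- on such a slice the class becomes $\pi$-exceptional or drops to the $c = e$ regime --- and then to control how many general points survive. Concretely, a cycle representing $m\alpha$ has each component mapping to a subvariety of $Z$ of dimension $\leq k - c$, and the fibers contribute at most the ``relative'' positivity; a careful count along the lines of the proof of the $c = e$ case, combined with the geometry of the projection to $Z$, should yield $\mc(m\alpha) = O(m^{(n-c)/(n-k)})$, hence $\kappa(\alpha) \leq n - c$. The delicate part is ensuring the estimate is uniform in $m$ and that the slicing does not lose essential contributions, which is exactly where the dimension restriction $n \leq 4$ keeps the combinatorics manageable.
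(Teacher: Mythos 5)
Your overall strategy is the paper's: dispose of $c \geq e$ by the two proven bullets of Theorem-Conjecture \ref{contractedconj}, observe that $c = 0$ makes the claimed bound $\kappa(\alpha) \leq n - c = n$ vacuous, and reduce everything else to the case $k - c \leq 1$, i.e.\ $c \in \{k-1,k\}$, which is Lemma \ref{reldimequal} together with Theorem \ref{contractedthrm}. The $k \leq 2$ branch goes through: once $c \geq 1$ you have $k - c \leq k - 1 \leq 1$. (Your enumeration there assumes $e \leq k$, which is false in general --- the relative dimension of $\pi$ has nothing to do with $k$ --- but that assumption is never actually needed for the conclusion.)

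The gap is in the $n \leq 4$ branch. You assert that $(k,c) \in \{(3,1),(4,1),(4,2)\}$ are genuine residual cases falling outside $k - c \leq 1$, and then only sketch a direct argument for them (``a careful count \ldots should yield \ldots'', ``the delicate part is \ldots''); as written this is not a proof. In fact these cases do not occur, and the reason is the half of the conjecture's hypothesis that your analysis ignores: the lower bound $c > k - \dim Z$. Concretely, $c \geq 1$ and $k - c \geq 2$ force $k \geq 3$, and the range $k - \dim Z < c < e = n - \dim Z$ is nonempty only if $k < n$, so $n \leq 4$ forces $(n,k) = (4,3)$ and hence $c \leq k-2 = 1$, i.e.\ $c = 1$. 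Then $c < 4 - \dim Z$ gives $\dim Z \leq 2$ while $c > 3 - \dim Z$ gives $\dim Z \geq 3$, a contradiction. So for $n \leq 4$ every class in the conjectural range has either $c = 0$ (vacuous) or $k - c \leq 1$, and the theorem follows from the quoted lemmas with no further work --- exactly the ``in particular'' the paper intends. Replace your sketched direct argument with this elimination and the proof is complete.
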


\begin{exmple} \label{p2timesp2two}
Consider again $X = \mathbb{P}^{2} \times \mathbb{P}^{2}$ equipped with the first projection map $\pi: X \to \mathbb{P}^{2}$.  The contractibility index of a non-zero pseudo-effective class $a H^{2} + b (H \cdot A) + c A^{2}$ is simply the smallest exponent of $A$ appearing in a term with non-zero coefficient.  Then Theorem \ref{contractedconj} is verified explicitly by Example \ref{p2timesp2one}.
\end{exmple}

\subsection{Acknowledgements}

I would like to thank I.~Coskun for a helpful conversation about Grassmannians.

\section{Background}

Throughout we work over $\mathbb{C}$.  Varieties are irreducible and reduced.  A cycle will always mean a $\mathbb{Z}$-cycle unless otherwise qualified, and a numerical class will always mean an $\mathbb{R}$-class unless otherwise qualified.

\subsection{Numerical spaces and cones}

For a projective variety $X$, we let $N_{k}(X)_{\mathbb{Z}}$ denote the abelian group of $k$-cycles up to numerical equivalence as in \cite{fulton84}.  We then set
\begin{align*}
N_{k}(X)_{\mathbb{Q}} & := N_{k}(X)_{\mathbb{Z}} \otimes_{\mathbb{Z}} \mathbb{Q} \\
N_{k}(X) & := N_{k}(X)_{\mathbb{Z}} \otimes_{\mathbb{Z}} \mathbb{R} 
\end{align*}
We let $N^{k}(X)$ denote the dual space of $N_{k}(X)$ consisting of $\mathbb{R}$-polynomials in Chern classes of vector bundles on $X$ up to numerical equivalence (and similarly define the dual groups $N^{k}(X)_{\mathbb{Q}}$ and $N^{k}(X)_{\mathbb{Z}}$).  There is an intersection product $N^{\ell}(X) \times N_{k}(X) \to N_{k-\ell}(X)$.  We refer to \cite{fl14} for a discussion of these spaces and their behavior under morphisms.

The pseudo-effective cone $\Eff_{k}(X) \subset N_{k}(X)$ is the closure of the cone generated by classes of effective cycles on $X$.  It is a full-dimensional proper convex closed cone.  A class in the interior of $\Eff_{k}(X)$ is called big.  We will use the notation $\alpha \preceq \beta$ to denote that $\beta - \alpha \in \Eff_{k}(X)$.  The dual cone to $\Eff_{k}(X)$ is the nef cone and is denoted $\Nef^{k}(X)$.

\begin{lem}  \label{intprop}
Let $X$ be a projective variety and let $H$ be an ample divisor.
\begin{enumerate}
\item Suppose that some multiple of $\alpha \in \Eff_{k}(X)$ is represented by an effective class.  Then some multiple of $H \cdot \alpha$ is represented by an effective class.  In particular, $H \cdot$ preserves pseudo-effectiveness.
\item If $\alpha \in \Eff_{k}(X)$ is big, then $H \cdot \alpha \in \Eff_{k-1}(X)$ is big.
\end{enumerate}
\end{lem}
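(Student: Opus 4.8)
My plan is to establish (1) by a direct geometric argument and then deduce (2) from (1) together with the fact that complete intersections of ample divisors are big. For (1) it suffices to treat the class of a single irreducible subvariety. If $k=0$ the claims are vacuous since $N_{-1}(X)=0$, so assume $k\geq 1$ and replace $H$ by a multiple so that it is very ample. For $Z\subseteq X$ irreducible of dimension $k$, a general $D\in|H|$ does not contain $Z$, so $D\cap Z$ is an effective Cartier divisor on $Z$ and the cycle $[D\cap Z]$ represents $H\cdot[Z]$; in particular $H\cdot[Z]\in\Eff_{k-1}(X)$. If $N\alpha$ is represented by an effective cycle $\sum_i a_iZ_i$ with $a_i\in\mathbb{Z}_{>0}$, then choosing $D$ general enough to meet every $Z_i$ properly exhibits $\sum_i a_i[D\cap Z_i]$ as an effective cycle whose class is a positive multiple of $H\cdot\alpha$. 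Finally, since every class in $\Eff_k(X)$ is a limit of nonnegative combinations of classes of irreducible $k$-dimensional subvarieties, since $H\cdot(-)\colon N_k(X)\to N_{k-1}(X)$ is continuous and linear, and since $\Eff_{k-1}(X)$ is closed, this yields $H\cdot\Eff_k(X)\subseteq\Eff_{k-1}(X)$.

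For (2), write $H^{n-k}\in N_k(X)$ for the complete intersection class. Since $\alpha$ lies in the interior of $\Eff_k(X)$, there is an $\epsilon>0$ with $\alpha-\epsilon H^{n-k}\in\Eff_k(X)$. Intersecting with $H$ and applying (1) gives
\[
H\cdot\alpha=\epsilon\,H^{n-k+1}+H\cdot\bigl(\alpha-\epsilon H^{n-k}\bigr),\qquad H\cdot\bigl(\alpha-\epsilon H^{n-k}\bigr)\in\Eff_{k-1}(X).
\]
Because $\Eff_{k-1}(X)$ is a convex cone, the sum of an interior point and an arbitrary point of the cone is again an interior point, so it suffices to know that the complete intersection class $H^{n-k+1}\in N_{k-1}(X)$ is big. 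This is the one input I would invoke: complete intersections of ample divisors are big (\cite{fl13}; in the lowest cases, such as divisors or surfaces, it is elementary via the Hodge index theorem).

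The step I expect to be the real content is precisely this last input for (2). It cannot be obtained by a naive induction that feeds (2) back into itself, since ``$H^{j}$ big $\Rightarrow H^{j+1}$ big'' is already a special case of (2); one genuinely needs a positivity property of the intersection pairing (Hodge index, or the structure of $\Eff_\bullet$ from \cite{fl13}). There is an equivalent, dual packaging of (2), namely that $H\cdot(-)$ sends $\Nef^{k-1}(X)\setminus\{0\}$ into $\Nef^{k}(X)\setminus\{0\}$: that the image lies in $\Nef^{k}(X)$ is immediate from (1) by duality, but the nonvanishing is again equivalent to the bigness of complete intersection classes, so this is not a shortcut. Everything else in the argument is routine.
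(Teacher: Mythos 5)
The paper states Lemma~\ref{intprop} as background with no proof supplied, so there is nothing to compare against line by line; your argument is correct and is the standard one this kind of statement rests on. Part (1) --- passing to a very ample multiple of $H$, intersecting a general member of $|H|$ properly with each component of an effective representative, and then extending to all of $\Eff_k(X)$ by linearity, continuity, and closedness of $\Eff_{k-1}(X)$ --- is fine, and in (2) you correctly isolate the one genuinely nontrivial input, namely the bigness of the complete intersection class $H^{n-k+1}$, which is indeed a theorem of Fulger--Lehmann and, as you observe, cannot be extracted from (1) or from a self-referential induction.
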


\subsection{Families of cycles and mobility count} \label{familysec}

For us, the most convenient definition of a family of cycles is the following.

\begin{defn}  \label{familydef}
Let $X$ be a projective variety.  A family of $k$-cycles on $X$ consists of a variety $W$, a reduced closed subscheme $U \subset W \times X$, and an integer $a_{i}$ for each component $U_{i}$ of $U$, such that for each component $U_{i}$ of $U$ the first projection map $p: U_{i} \to W$ is flat dominant of relative dimension $k$.  If each $a_{i} \geq 0$ we say that we have a family of effective cycles.  We say that $\sum a_{i}U_{i}$ is the cycle underlying the family.  
\end{defn}

We will usually denote a family of $k$-cycles using the notation $p: U \to W$, with the rest of the data implicit.
Over any closed point of $W$, we obtain a $k$-cycle on $X$ by taking the cycle underlying the corresponding fiber of $p$; we call these cycles the members of the family.

\begin{defn} \label{mcdefn}
Let $X$ be a projective variety and let $W$ be a variety.  Suppose that $U\subset W \times X$ is a subscheme and let $p: U \to W$ and $s: U \to X$ denote the projection maps.  The mobility count $\mc(p)$ of the morphism $p$ is the maximum non-negative integer $b$ such that the map
\begin{equation*}
U \times_{W} U \times_{W} \ldots \times_{W} U \xrightarrow{s \times s \times \ldots \times s} X \times X \times \ldots \times X
\end{equation*}
is dominant, where we have $b$ terms in the product on each side.  (If the map is dominant for every positive integer $b$, we set $\mc(p) = \infty$.)

For $\alpha \in N_{k}(X)_{\mathbb{Z}}$, the mobility count of $\alpha$, denoted $\mc(\alpha)$, is defined to be the largest mobility count of any family of effective cycles representing $\alpha$.
\end{defn}

\subsection{Geometry of families}

It will often be helpful to replace a family $p: U \to W$ by a slightly modified version.  We list briefly several possible changes.  We do not describe the constructions formally; most are explained more carefully in \cite{lehmann16}.
\begin{itemize}
\item Families of cycles admit proper pushforwards and flat pullbacks.  To perform such an operation, one does the corresponding operation on the cycle $\sum a_{i} U_{i}$ underlying the family; after passing to a smaller open subset $W^{0} \subset W$ to ensure flatness, we obtain a new family of cycles.
\item Suppose given two families $p: U \to W$ and $q: S \to T$ of effective $k$-cycles.  We can define the family sum over an open subset of $W \times T$ which parametrizes a sum of a member of $p$ and a member of $q$.  By \cite[Lemma 4.9]{lehmann16}, the mobility count adds under this operation.
\item Suppose $p:U \to W$ is a family of effective $k$-cycles and $D$ is a divisor.  If the general member of $p$ has no component contained in $D$, then we can take generic intersections to define a family $p \cdot D$ of effective $(k-1)$-cycles.  We can also take generic intersections with a linear series of Cartier divisors $\mathcal{D}$.
\end{itemize}
There are also a couple constructions which ``improve'' the geometry of a family without changing it in a fundamental way.
\begin{itemize}
\item Let $p: U \to W$ be a family of effective cycles on $X$.  Using the closedness of the Chow scheme, one can show that there is a normal projective variety $W'$ that is birational to $W$ and a family of cycles $p': U' \to W'$ such that $p$ and $p'$ agree over an open subset of the base.  By \cite[Proposition 4.5]{lehmann16}, this operation does not change the mobility count of $p$.
\item Let $p: U \to W$ be a family of effective cycles on $X$ and suppose that $U$ is irreducible.  By base-changing the family via a suitable morphism $g: T \to W^{0}$ for $W^{0} \subset W$ open, we may ensure that the general fiber of the base-change family is irreducible (as geometric integrality of fibers is constructible on the base).  

It is a priori unclear whether this change can affect the mobility count.  However, by using a suitable family sum to ``glue'' the components back together one can construct a family whose cycle-theoretic fibers are the same as those for $p$ but whose irreducible components have generically irreducible fibers.  The mobility count of this modified family is at least as large as $p$.  With more care, one can perform an analogous change when $U$ consists of several components.
\end{itemize}

In sum, when working with families of maximal mobility count, there is no loss in assuming that our family lies over a projective normal base and that the general fibers of the restriction of $p$ to each component of $U$ are irreducible.

Suppose given a dominant generically finite map $\phi: X \dashrightarrow Y$.  Let $p: U \to W$ denote a family of effective $k$-cycles on $X$.  We define the strict transform family $f_{*}p$ by first removing all components of $U$ whose map to $X$ is not dominant, taking the strict transform of the remaining components to a resolution of $f$, and then pushing forward the members of the family (see \cite{lehmann16}).

\begin{thrm}[\cite{lehmann16} Lemma 4.8] \label{mobcountstricttransform}
Let $X$ be a projective variety and let $p: U \to W$ be a  family of effective $k$-cycles on $X$.  If $f: X \dashrightarrow Y$ is a dominant generically finite map, then $\mc(p) \leq \mc(f_{*}p)$.  If $f$ is furthermore birational, then $\mc(p) = \mc(f_{*}p)$.
\end{thrm}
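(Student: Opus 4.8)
This statement is recalled from \cite{lehmann16}; here is the argument I would give. Write $s \colon U \to X$ for the evaluation map and set $b := \mc(p)$ (the case $b = \infty$ is identical). The plan is first to reduce to the situation where every component of $U$ dominates $X$. The image of the evaluation map $s^{(b)} \colon U \times_W \cdots \times_W U \to X^b$ is a finite union of constructible sets, one for each $b$-tuple of components of $U$, and such a union is dense in the irreducible variety $X^b$ only if one of the pieces already is; but any piece arising from a tuple that uses a component $U_i$ with $\overline{s(U_i)} \subsetneq X$ is contained in a proper subvariety of $X^b$. Hence the maximal $b$ is realized by a subfamily consisting of components dominating $X$, and since $f_*p$ is formed only after deleting the non-dominant components, I may assume from now on that every component of $U$ dominates $X$.

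Next I would introduce a resolution $\rho \colon \tilde{X} \to X$ of $f$, so that $\rho$ is proper birational and $q := f \circ \rho \colon \tilde{X} \to Y$ is a morphism, necessarily dominant and (as $f$ is) generically finite. Let $E \subsetneq X$ be the closed locus over which $\rho$ fails to be an isomorphism. Since each component of $U$ dominates $X$, the subset $s^{-1}(E)$ is a proper closed subset of $U$, and over the dense open $U^{\circ} := U \setminus s^{-1}(E)$ the map $s$ lifts uniquely to a morphism $\tilde{s} \colon U^{\circ} \to \tilde{X}$ with $\rho \circ \tilde{s} = s$. By the definition of the strict transform family, its total space $\tilde{U}$ (after shrinking $W$ to make $\tilde{U}\to W$ flat, which does not change the mobility count by \cite[Proposition 4.5]{lehmann16}) receives a dominant $W$-morphism $\theta \colon U^{\circ} \to \tilde{U}$ with $\bar{s} \circ \theta = q \circ \tilde{s}$, where $\bar{s} \colon \tilde{U} \to Y$ denotes the composition of the structure map $\tilde{U} \to \tilde{X}$ with $q$; note that the evaluation map of $f_*p$ has the same image in $Y^b$ as $\bar{s}^{(b)}$, so it suffices to show that $\bar{s}^{(b)}$ is dominant.

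The main point is then to propagate dominance along these maps. Restricting $s^{(b)}$ to the dense open $(U^{\circ})^{\times_W b} \subseteq U^{\times_W b}$ keeps it dominant onto $X^b$. On that open set $s^{(b)}$ factors as $\rho^{\times b} \circ \tilde{s}^{(b)}$, and since the $b$-fold self-product $\rho^{\times b} \colon \tilde{X}^{\,b} \to X^{b}$ is again proper birational, dominance of the composite forces $\tilde{s}^{(b)} \colon (U^{\circ})^{\times_W b} \to \tilde{X}^{\,b}$ to be dominant. Composing with the dominant morphism $q^{\times b} \colon \tilde{X}^{\,b} \to Y^{b}$ shows that $(q \circ \tilde{s})^{(b)} = \bar{s}^{(b)} \circ \theta^{(b)}$ is dominant onto $Y^b$; and because $\theta^{(b)}$ is dominant (a $b$-fold fiber product over $W$ of a $W$-dominant morphism is $W$-dominant, hence dominant), $\bar{s}^{(b)}$ itself is dominant. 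This gives $\mc(f_*p) \ge b = \mc(p)$. When $f$ is in addition birational, I would apply this inequality to the birational map $f^{-1} \colon Y \dashrightarrow X$ and the family $f_*p$ to obtain $\mc(f_*p) \le \mc\!\big((f^{-1})_*(f_*p)\big)$; since for a general member $Z_w$ of $p$ no component of $Z_w$ lies in the relevant exceptional locus (all components of $U$ dominate $X$) and all intervening maps on cycles have degree one, $(f^{-1})_*(f_*p)$ agrees with $p$ over a dense open of $W$ and so has the same mobility count, yielding $\mc(f_*p) = \mc(p)$.

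The genuinely new content is the lifting step in the third paragraph. I expect the main obstacle to be the bookkeeping of the first two paragraphs rather than any conceptual difficulty: precisely identifying the constructed morphisms $\tilde{s}$ and $\theta$ with the strict transform family, confirming that passing to the flat locus in $W$ costs no mobility count, and checking in the birational case that strict transform is genuinely inverse to strict transform on general members.
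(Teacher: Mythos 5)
The paper does not actually prove this statement---it is imported verbatim as \cite[Lemma 4.8]{lehmann16}---so there is no internal proof to compare against. Your reconstruction is correct and follows the route one would expect that reference to take: reduce to the components of $U$ dominating $X$, lift the evaluation map over the isomorphism locus of a resolution $\rho$, and propagate dominance of the $b$-fold evaluation map through $\rho^{\times b}$ (proper and generically one-to-one, so dominance descends to $\tilde{X}^{b}$) and then through $q^{\times b}$ (dominant), with the birational case obtained by symmetry. Two small points are worth tightening: the density of $(U^{\circ})^{\times_W b}$ in $U^{\times_W b}$ is not automatic for fiber products of dense opens and should be justified either via the flatness of $U \to W$ built into Definition \ref{familydef} or, more cheaply, by observing that the complement maps into $\bigcup_j X^{j-1} \times E \times X^{b-j}$, a proper closed subset of $X^{b}$, so dominance survives the restriction in any case; and the identification of the evaluation image of $f_{*}p$ with that of $\bar{s}^{(b)}$ uses that no component of $\tilde{U}$ is contracted by $\mathrm{id} \times q$, which does follow from your reduction (each component dominates $\tilde{X}$, while the non-finite locus of $q$ is a proper closed subset) but deserves a sentence. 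Neither issue is a gap in the argument.
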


More generally, given any dominant map $f: X \dashrightarrow Z$ which does not contract any effective cycles $V$ through a general point satisfying $m\alpha \succeq [V]$, we obtain a pushforward family which has at least as large a mobility count as the original family.

\subsection{Variant of the mobility count}

We record for later use a variant of the mobility count.  By a family of (closed) subschemes of $X$, we mean a closed subscheme $R \subset S \times X$, where $S$ is a variety and the projection $q: R \to S$ is surjective.

\begin{defn} \label{familymcdef}
Let $X$ be a projective variety, and let $q: R \to S$ be a fixed family of subschemes of $X$ equipped with a flat morphism $t: R \to X$.  Suppose that $p: U \to W$ is a family of effective $k$-cycles on $X$, and let $p': U' \to W^{0}$ denote the flat pullback family to $R$.  We define the mobility count $\mc(p;q)$ of $p$ with respect to the family $q$ to be the largest non-negative integer $b$ such that the map
\begin{equation*}
U' \times_{W^{0}} U' \times_{W^{0}} \ldots \times_{W^{0}} U' \to S \times S \times \ldots \times S
\end{equation*}
is dominant, where we have $b$ terms in the product on each side.  (If the map is dominant for every positive integer $b$, we set $\mc(p;q) = \infty$.)
\end{defn}

Conceptually, $\mc(p;q)$ represents how many general elements of $q$ can be intersected by members of the family $p$.  One could of course define an analogous notion where $t$ is not assumed to be flat, but this situation is the only one we will need.

\begin{lem} \label{familymcbound}
Let $X$ be a projective variety of dimension $n$ with a fixed very ample divisor $A$.  Let  $q: R \to S$ be a family of equidimensional codimension $r$ subschemes of $X$ equipped with a flat morphism $t: R \to X$.  Consider a family $p: U \to W$ of effective $k$-cycles where $r > k$.  Then
\begin{equation*}
\mc(p;q) \leq 2^{kr+3r} ([p] \cdot A^{k} + 1)^{r/r-k}.
\end{equation*}
\end{lem}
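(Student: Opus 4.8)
The plan is to bound $\mc(p;q)$ by an iterated intersection argument: repeatedly cut the members of $p$ down by general hyperplanes from $|A|$ until we reach a family of effective $0$-cycles whose members have controlled degree, and then count how many general elements of $q$ such $0$-cycles can meet. The key observation is that a member $Z$ of $p$ and a general member $R_s$ of $q$ can only intersect in positive dimension on a measure-zero locus, so (up to shrinking the base of $q$) the incidence we care about is really between the $0$-cycle $Z \cap A_1 \cap \cdots \cap A_{k}$ — for general hyperplanes $A_i$ chosen adapted to $s$ — and $R_s$; but passing to general hyperplanes costs us nothing in the mobility count because each generic intersection $p \cdot A$ is again a family of effective cycles, and the relevant incidence $U' \to S^{b}$ factors appropriately.

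The steps I would carry out, in order, are: (1) Reduce to the case where $U$ is irreducible with generically irreducible fibers and $W$ is normal projective, using the standard modifications recalled in the excerpt; then reduce to a single irreducible general member $Z$ of degree $d := [p]\cdot A^{k}$ with respect to $A$. (2) For a general point $s \in S$, observe that $Z \cap R_s$ has dimension $\le k - (\mathrm{codim}\, R_s) < 0$ generically... no — rather, $Z \cap R_s$ is generically empty, so the condition ``$Z$ meets $R_s$'' is a codimension $\ge r-k$ condition on $s$; make this precise by a dimension count using the flat map $t: R \to X$ and $\dim Z = k$, $\dim R_s = n-r$, to see that the incidence variety $\{(z,s) : z \in Z \cap R_s\} \subset Z \times S$ maps to $S$ with image of codimension $\ge r-k$. (3) Slice: choose general $A_1,\dots,A_{k-?}$ from $|A|$; using Bertini and the flatness of $p$, form the family $p \cdot A_1 \cdots$ and track how $\mc(\,\cdot\,;q)$ changes. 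Each hyperplane section drops $k$ by one and, by the incidence count in step (2), can decrease the number of general members of $q$ we can hit by at most a bounded factor. (4) At the bottom, a single point can lie on at most the ``codimension-$r$ worth'' of members of $q$ through it; combined with $d$ points, this gives a bound polynomial in $d$ of degree governed by $r/(r-k)$ — the exponent arises because each of the $r-k$ slicing steps multiplies the count, and each step the count is bounded by (roughly) $d^{r/(r-k)}$-type quantities via Bézout. The powers of $2$ and the ``$+1$''s absorb the combinatorial overhead of Bertini genericity, reducedness, and the crude Bézout estimates at each stage.

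The main obstacle I expect is controlling how the mobility-count-with-respect-to-$q$ behaves under generic hyperplane section: we need that if members of $p$ hit $b$ general members of $q$, then members of $p \cdot A$ hit at least roughly $b / (\text{something bounded by } d)$ general members of $q$ — i.e., that slicing by $A$ does not destroy too many incidences. This requires knowing that for a general member $Z$ of $p$ and a general $A \in |A|$, a positive-codimension portion of the members $R_s$ meeting $Z$ still meet $Z \cap A$; equivalently, that $Z \cap R_s$, although generically empty, when nonempty is ``spread out'' enough in $Z$ that a general hyperplane still catches it. This is where the hypothesis that $q$ has equidimensional fibers and $t$ is flat is essential, and where the bulk of the genericity bookkeeping lives. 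The rest — the final Bézout count and assembling the exponent $r/(r-k)$ — is routine once the inductive slicing estimate is in hand.
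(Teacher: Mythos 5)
Your overall scheme --- slice members of $p$ down by general hyperplanes and then count incidences by B\'ezout --- has a genuine gap at exactly the step you flag as the main obstacle. Since $r > k$, a member $Z$ of $p$ meets a general $R_s$ in a locus of expected dimension $k-r<0$: for the finitely many general $s_i$ you are trying to hit, $Z \cap R_{s_i}$ is a finite (generically empty) set of points. A general hyperplane $A_1 \in |A|$ therefore misses \emph{all} of these intersection points, so $Z\cap A_1$ meets essentially none of the $R_{s_i}$ that $Z$ meets; the loss under slicing is not ``a bounded factor'' but total. To salvage the incidences you must choose the hyperplanes through the intersection points, i.e.\ adapted to the tuple $(s_1,\dots,s_b)$, but then the sliced objects no longer form a single family $p\cdot A_1\cdots$ over which dominance onto $S^{\times b}$ can be tested, and the induction in your step (3) has no uniform statement to run on. Separately, your step (4) asks B\'ezout to produce the exponent $r/(r-k)$ ``routinely''; that bound is precisely the content of the mobility-count estimate for families of $k$-cycles on $\mathbb{P}^{r}$ (Theorem 5.12 of \cite{lehmann16}), which rests on a nontrivial degeneration argument and is not a one-line B\'ezout count.

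The paper's proof goes the opposite way: instead of cutting the cycles down, it projects the ambient variety. Choose a general linear projection $f: X \dashrightarrow \mathbb{P}^{r}$ given by a general $(r+1)$-dimensional subspace of $H^{0}(X,A)$, so that the fibers of $f$ are general complete intersections $A^{n-r}$ of the same dimension $n-r$ as the members of $q$. Using the flatness of $t: R\to X$, an inductive dominance argument shows that if members of $p$ meet $b$ general members of $q$, then the pushforward family $f_{*}p$ of $k$-cycles on $\mathbb{P}^{r}$, which has degree $[p]\cdot A^{k}$, passes through $b$ general points of $\mathbb{P}^{r}$. The stated bound then follows by quoting the known estimate on $\mathbb{P}^{r}$, whose ambient dimension $r$ is what produces the exponent $r/(r-k)$. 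The missing idea in your write-up is this replacement of ``meeting a general $(n-r)$-dimensional member of $q$'' by ``containing a general point of $\mathbb{P}^{r}$.''
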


The goal of this lemma is the exponent of $r/r-k$ in the degree of $[p]$; there has been no attempt to optimize the leading constant.

\begin{proof}
We may assume that $\mc(p;q) > 0$.  Just as in Definition \ref{familymcdef}, let $p': U' \to W^{0}$ be the flat pullback family of $p$.  Then we have a dominant map
\begin{equation*}
U' \times_{W^{0}} U' \times_{W^{0}} \ldots \times_{W^{0}} U' \to S \times S \times \ldots \times S
\end{equation*}
where there are $\mc(p;q)$ terms on both sides.  Consider the map
\begin{equation*}
U' \times_{W^{0}} U' \times_{W^{0}} \ldots \times_{W^{0}} U' \to R \times R \times \ldots \times R
\end{equation*}
and let $V_{R}$ denote the image.
By composing with the flat map $R \to X$ we obtain
\begin{equation*}
U' \times_{W^{0}} U' \times_{W^{0}} \ldots \times_{W^{0}} U' \to X \times X \times \ldots \times X.
\end{equation*}
Let $f: X \dashrightarrow \mathbb{P}^{r}$ denote the rational map defined by a general $(r+1)$-dimensional subspace of $H^{0}(X,A)$.  
We claim that the induced rational map
\begin{equation*}
U' \times_{W^{0}} U' \times_{W^{0}} \ldots \times_{W^{0}} U' \dashrightarrow \mathbb{P}^{r} \times \mathbb{P}^{r} \times \ldots \times \mathbb{P}^{r}
\end{equation*}
is dominant, where there are $\mc(p;q)$ factors on each side.  We argue inductively on the number of factors.  Suppose we fix a general fiber of the map $R^{\times \mc(p;q)} \to S^{\times \mc(p;q)-1}$.  The intersection of $V_{R}$ with this fiber dominates $S$ under the first projection, and a dimension count (using flatness of $R \to X$) shows that this set must meet the pullback of a general complete intersection variety $Q = A^{n-r}$ from the first factor $X$.  Varying the fiber we see that $V_{R} \cap f_{1}^{-1}(Q)$ maps dominantly onto $S^{\mc(p;q)-1}$.  Repeating this argument inductively, we see that $V_{R}$ must intersect the intersection of the pullbacks of a general $Q$ from all the factors, which is equivalent to the dominance of the map.

By construction, this map factors through 
\begin{equation*}
U|_{W^{0}} \times_{W^{0}} U|_{W^{0}} \times_{W^{0}} \ldots \times_{W^{0}} U|_{W^{0}} \dashrightarrow \mathbb{P}^{r} \times \mathbb{P}^{r} \times \ldots \times \mathbb{P}^{r}
\end{equation*}
which is then itself dominant.  In fact, even if we replace $W^{0}$ by a smaller open subset, this map will still be dominant; this follows from the argument of \cite[Proposition 4.5]{lehmann16}.

By generality of $f$, we can pushforward $p$ to define a family of $k$-cycles on $\mathbb{P}^{r}$.  Note that this image family has degree $[p] \cdot A^{k}$.  Furthermore, since we obtain a dominant map above even when shrinking $W^{0}$ to the locus where the pushforward family is defined, the pushforward family has mobility count at least $\mc(p;q)$.  One then applies \cite[Theorem 5.12]{lehmann16} to bound mobility counts on projective space.
\end{proof}

\section{Basic properties}

We next verify some basic properties of the Iitaka dimension.  The following Lemma \ref{iitakadimhom} shows that, just as for divisors, the Iitaka dimension is invariant under rescaling.  In particular, we can naturally extend the Iitaka dimension to any class $\alpha \in N_{k}(X)_{\mathbb{Q}}$.

\begin{lem} \label{iitakadimhom}
Let $X$ be a projective variety and let $\alpha \in N_{k}(X)_{\mathbb{Z}}$.  Then for any positive integer $c$ we have $\kappa(\alpha) = \kappa(c\alpha)$.
\end{lem}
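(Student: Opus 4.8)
The plan is to reduce the claim to a comparison of asymptotic growth rates and then invoke the superadditivity of the mobility count under family sums. First I would dispose of the degenerate case: a positive multiple of $\alpha$ is represented by an effective cycle if and only if a positive multiple of $c\alpha$ is, since $c$ copies of an effective representative of $d\alpha$ form an effective representative of $d(c\alpha)$, and conversely an effective representative of $d(c\alpha)$ is an effective representative of $(dc)\alpha$. Hence $\kappa(\alpha) = -\infty$ precisely when $\kappa(c\alpha) = -\infty$. In the remaining case, since $X$ and $k$ (hence the factor $n-k$) are unchanged, it suffices to prove that
\[
\rho(\beta) := \sup \left\{ r \in \mathbb{R}_{\geq 0} \left| \, \limsup_{m \to \infty} \frac{\mc(m\beta)}{m^{r}} > 0 \right. \right\}
\]
satisfies $\rho(\alpha) = \rho(c\alpha)$. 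Note that $r \mapsto \limsup_{m} \mc(m\beta)/m^{r}$ is nonincreasing, so that $r < \rho(\beta)$ forces $\limsup_{m} \mc(m\beta)/m^{r} > 0$, hence $\mc(m\beta) \geq \epsilon m^{r}$ for some $\epsilon > 0$ and infinitely many $m$.

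For the inequality $\rho(c\alpha) \leq \rho(\alpha)$, observe that $\mc(m \cdot c\alpha) = \mc((cm)\alpha)$, so up to the reindexing $m \mapsto cm$ the sequence $\{\mc(m \cdot c\alpha)\}$ is a subsequence of $\{\mc(m\alpha)\}$. Concretely, if $r < \rho(c\alpha)$ then $\mc((cm)\alpha) \geq \epsilon m^{r}$ for some $\epsilon > 0$ and infinitely many $m$, whence $\mc((cm)\alpha) \geq (\epsilon / c^{r})(cm)^{r}$ for those same values, forcing $r \leq \rho(\alpha)$; letting $r \to \rho(c\alpha)$ gives the inequality.

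The reverse inequality $\rho(\alpha) \leq \rho(c\alpha)$ requires the one nontrivial ingredient. Recall from the family sum construction that if $a\alpha$ and $b\alpha$ are both represented by effective cycles, then choosing families of maximal mobility count representing each and forming their family sum (which represents $(a+b)\alpha$) gives, by \cite[Lemma 4.9]{lehmann16}, the bound $\mc((a+b)\alpha) \geq \mc(a\alpha) + \mc(b\alpha)$. Applying this $c-1$ times with all summands equal to $m$ yields $\mc((cm)\alpha) \geq c \cdot \mc(m\alpha)$ whenever $m\alpha$ is effective. Now fix $r < \rho(\alpha)$ and choose $\epsilon > 0$ and infinitely many $m$ with $\mc(m\alpha) \geq \epsilon m^{r}$; for each such $m$ the class $m\alpha$ is automatically effective, as a family computing a positive mobility count exists. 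Then $\mc(m \cdot c\alpha) = \mc((cm)\alpha) \geq c\epsilon m^{r}$ for infinitely many $m$, so $\limsup_{m \to \infty} \mc(m \cdot c\alpha)/m^{r} \geq c\epsilon > 0$ and therefore $r \leq \rho(c\alpha)$; letting $r \to \rho(\alpha)$ completes the argument.

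The main obstacle is the superadditivity input above; once it is in hand, everything else is routine bookkeeping with limits superior along subsequences. The one point requiring care is that the family sum needs effective representatives of \emph{both} summands, which is why the reverse inequality is run only along the subsequence where $\mc(m\alpha) > 0$ rather than for all $m$.
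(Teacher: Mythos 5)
Your proof is correct and follows essentially the same route as the paper: the paper delegates the $\limsup$ bookkeeping to Lazarsfeld's Lemma 2.2.38 (restated as Lemma \ref{lazlemma}), whose hypothesis is verified by exactly the additivity of the mobility count under family sums that you invoke, and whose proof is the same two-inequality subsequence argument you carry out by hand. Your care about needing effective representatives before forming the family sum matches the role of the hypothesis ``$f(r)>0$'' in that lemma, so nothing is missing.
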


\begin{proof}
Fix a positive real number $r$, and define the function $g_{r}: N_{k}(X)_{\mathbb{Z}} \to \mathbb{R} \cup \{ \infty \}$ by
\begin{equation*}
g_{r}(\alpha) = \limsup_{m \to \infty} \frac{\mc(m\alpha)}{m^{r}}.
\end{equation*}
It suffices to show that $c^{r}g_{r}(\alpha) = g_{r}(c\alpha)$.  This is a consequence of the following Lemma \ref{lazlemma} applied to the function $f: \mathbb{N} \to \mathbb{R}_{\geq 0}$ which sends $m \mapsto \mc(m\alpha)$.  (Note that the hypothesis of Lemma \ref{lazlemma} is verified by using the additivity of the mobility count of the family sum.)
\end{proof}

\begin{lem}[\cite{lazarsfeld04} Lemma 2.2.38] \label{lazlemma}
Let $f: \mathbb{N} \to \mathbb{R}_{\geq 0}$ be a function.  Suppose that for any $r,s \in \mathbb{N}$ with $f(r) > 0$ we have that $f(r+s) \geq f(s)$.
Then for any $k \in \mathbb{R}_{>0}$ the function $g: \mathbb{N} \to \mathbb{R} \cup \{ \infty \}$ defined by
\begin{equation*}
g(r) := \limsup_{m \to \infty} \frac{f(mr)}{m^{k}}
\end{equation*}
satisfies $g(cr) = c^{k}g(r)$ for any $c,r \in \mathbb{N}$.
\end{lem}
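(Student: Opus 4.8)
The plan is to fix $k \in \mathbb{R}_{>0}$ and $r \in \mathbb{N}$, set $h(n) := f(nr)$, and reduce everything to a statement about the single sequence $h$. Observe first that $h$ inherits the hypothesis: if $h(a) > 0$ then $h(a+b) = f((a+b)r) \ge f(br) = h(b)$ for all $b \in \mathbb{N}$, applying the hypothesis on $f$ with $r' = ar$ and $s' = br$. Unravelling the definition of $g$ and substituting $n = mc$ gives $g(r) = \limsup_{n} h(n)/n^{k}$ and $g(cr) = c^{k}\limsup_{c \mid n} h(n)/n^{k}$, so the desired identity $g(cr) = c^{k}g(r)$ is equivalent to
\begin{equation*}
\limsup_{c \mid n} \frac{h(n)}{n^{k}} = \limsup_{n} \frac{h(n)}{n^{k}}.
\end{equation*}
The inequality ``$\le$'' is trivial since the left side is a limsup over a subsequence, and if the right side is $0$ then so is the left; hence I may assume the right side is some $\ell \in (0,\infty]$.

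The structural input I would exploit is that $S := \{ n \ge 1 : h(n) > 0\}$ is a sub-semigroup of $(\mathbb{Z}_{>0},+)$: if $a,b \in S$ then the monotonicity property gives $h(a+b) \ge h(b) > 0$. Since $\ell > 0$, $S$ is infinite, so, applying to $S/g$ (with $g := \gcd(S)$) the classical fact that a sub-semigroup of $\mathbb{Z}_{>0}$ with greatest common divisor $1$ omits only finitely many positive integers, there is an $M_{0}$ with $mg \in S$ for all $m \ge M_{0}$; note also that every element of $S$ is divisible by $g$.

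The heart of the argument is the following claim: for each $n \in S$ there is an integer $j = j(n)$, bounded above by the fixed constant $C_{0} := (M_{0}+c)g$ independent of $n$, with $j \in S$, $c \mid (n+j)$, and $h(n+j) \ge h(n)$. To produce $j$, note that $d := \gcd(g,c)$ divides $g$ and hence divides $n$, so the congruence $mg \equiv -n \pmod{c}$ is solvable in $m$, its solutions forming a full residue class modulo $c/d$; taking the least solution $m^{*} \ge M_{0}$ gives $m^{*} < M_{0}+c$, and $j := m^{*}g$ then lies in $S$ (since $m^{*} \ge M_{0}$), makes $n+j$ divisible by $c$, and satisfies $h(n+j) = h(j+n) \ge h(n)$ by the monotonicity property applied with $j$ in the role of the element with positive value. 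This is the step I expect to be the main obstacle, and it is exactly where one needs $S$ to be a semigroup rather than merely the translation-monotonicity of $h$: the divisibility $\gcd(g,c) \mid n$ for $n \in S$ makes the congruence solvable, and the cofiniteness of $S$ inside $g\mathbb{Z}_{>0}$ keeps $j$ bounded.

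To finish, choose a strictly increasing sequence $n_{i} \to \infty$ with $h(n_{i})/n_{i}^{k} \to \ell$; after discarding finitely many terms we may assume $n_{i} \in S$ for all $i$. Setting $n_{i}' := n_{i} + j(n_{i})$, each $n_{i}'$ is a multiple of $c$ with $n_{i}' \le n_{i} + C_{0}$, so
\begin{equation*}
\limsup_{c \mid n}\frac{h(n)}{n^{k}} \ \ge\ \limsup_{i}\frac{h(n_{i}')}{(n_{i}')^{k}} \ \ge\ \limsup_{i}\frac{h(n_{i})}{(n_{i}+C_{0})^{k}} \ =\ \limsup_{i}\frac{h(n_{i})}{n_{i}^{k}}\left(\frac{n_{i}}{n_{i}+C_{0}}\right)^{k} \ =\ \ell,
\end{equation*}
using $(n_{i}/(n_{i}+C_{0}))^{k} \to 1$; the same computation reads correctly as $\infty = \infty$ when $\ell = \infty$. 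This gives the reverse inequality and hence the lemma. The remaining parts are routine bookkeeping with limsups and the elementary number theory of the congruence; no further geometric input is needed.
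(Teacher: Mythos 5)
Your proof is correct. Note that the paper does not actually supply an argument for this lemma: it cites \cite{lazarsfeld04}, Lemma 2.2.38, and remarks that the proof of the homogeneity of the volume function goes through verbatim in this abstract setting. Your write-up follows the same underlying strategy as that standard argument --- reduce to showing that the limsup of $h(n)/n^{k}$ over all $n$ equals the limsup over multiples of $c$, and achieve this by shifting an index $n$ with $h(n)>0$ by a bounded amount into the residue class $0 \bmod c$ without decreasing $h$ --- but you work out a detail that quick treatments gloss over. Namely, a naive version of the shift (``fix one $p$ with $h(p)>0$ and replace $n$ by $n+jp$ for some $0\le j<c$'') only lands on a multiple of $c$ when $\gcd(p,c)\mid n$; your observation that $S=\{n: h(n)>0\}$ is a subsemigroup of $(\mathbb{Z}_{>0},+)$, hence cofinite in $g\mathbb{Z}_{>0}$ for $g=\gcd(S)$, is exactly what makes the congruence $mg\equiv -n \pmod{c}$ solvable for every $n\in S$ with a solution of bounded size, and this is the right way to close that gap. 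The limsup bookkeeping at the end is fine, including the $\ell=\infty$ case. One cosmetic point: you reuse the letter $g$ both for the function in the statement and for $\gcd(S)$; rename one of them.
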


\begin{rmk}
Although \cite[Lemma 2.2.38]{lazarsfeld04} only explicitly address the volume function, the essential content of the proof is the more general statement above.
\end{rmk}

\begin{prop}
Let $X$ be a projective variety of dimension $n$ and let $\alpha \in N_{k}(X)_{\mathbb{Z}}$.  Then
\begin{equation*}
\kappa(\alpha) \in \{ -\infty \} \cup \{ 0 \} \cup [n-k,n].
\end{equation*}
\end{prop}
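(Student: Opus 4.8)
The plan is to write $S = \{ r \in \mathbb{R}_{\ge 0} \mid \limsup_{m\to\infty} \mc(m\alpha)/m^{r} > 0 \}$, so that by definition $\kappa(\alpha) = -\infty$ unless some positive multiple of $\alpha$ is effective, in which case $\kappa(\alpha) = (n-k)\sup S$ (interpreting $\sup\emptyset$ as $0$, since a rigid effective class may have $S = \emptyset$). It therefore suffices to prove, under the hypothesis that $m_{0}\alpha$ is effective for some $m_{0} > 0$, that $\sup S \in \{0\} \cup [1,\, n/(n-k)]$ — i.e. that $\sup S$ is never forced into the open interval $(0,1)$ and that $\sup S \le n/(n-k)$. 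Rescaling by $(n-k)$ then gives the asserted value set. The case $k = n$ is immediate, as then $n-k = 0$ and $\kappa(\alpha) = 0$.

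For the lower estimate, I would argue: suppose $\sup S > 0$, so $S$ is nonempty and hence $\mc(m\alpha)$ does not vanish for every $m$; fix $m_{1} \ge 1$ with $\mc(m_{1}\alpha) \ge 1$ and a family $p$ of effective $k$-cycles representing $m_{1}\alpha$ with $\mc(p) \ge 1$. For each $c \ge 1$ the $c$-fold family sum of $p$ with itself represents $c m_{1}\alpha$ and, by additivity of the mobility count under family sums \cite[Lemma 4.9]{lehmann16}, has mobility count $\ge c$. Hence $\limsup_{m\to\infty} \mc(m\alpha)/m \ge \limsup_{c\to\infty} c/(cm_{1}) = 1/m_{1} > 0$, so $1 \in S$ and $\sup S \ge 1$. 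Thus $\sup S \in \{0\} \cup [1,\infty)$, and if $\mc(m\alpha) = 0$ for all $m$ then $S = \emptyset$ and $\kappa(\alpha) = 0$.

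For the upper estimate (now $k < n$) it is enough to show $\mc(m\alpha) \le C\, m^{n/(n-k)}$ for a constant $C$ depending only on $X$ and $\alpha$, since then $\mc(m\alpha)/m^{r} \to 0$ for every $r > n/(n-k)$. I would deduce this from the corresponding bound on projective space. Fix a very ample divisor $A$, use it to embed $X \hookrightarrow \mathbb{P}^{N}$, and let $f : X \to \mathbb{P}^{n}$ be the restriction of a general linear projection from a $\mathbb{P}^{N-n-1}$ disjoint from $X$; since $\dim X = n$, this $f$ is a finite surjective morphism with $f^{*}\mathcal{O}(1) = A$. Because $f$ is finite it contracts no subvariety, so the pushforward construction described after Theorem \ref{mobcountstricttransform} applies and yields $\mc(m\alpha) \le \mc(f_{*}(m\alpha))$, where $f_{*}(m\alpha) = m\, f_{*}\alpha$ is an effective class on $\mathbb{P}^{n}$ of degree $f_{*}(m\alpha)\cdot H^{k} = m\,(\alpha\cdot A^{k})$. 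Applying the bound on mobility counts in projective space — \cite[Theorem 5.12]{lehmann16}, the same input used in the proof of Lemma \ref{familymcbound} — gives $\mc(m\, f_{*}\alpha) \le C\,(m\,\alpha\cdot A^{k})^{n/(n-k)}$, the desired estimate. Combining the two paragraphs, either $\kappa(\alpha) = 0$, or $\sup S \in [1,\, n/(n-k)]$ and $\kappa(\alpha) = (n-k)\sup S \in [n-k,\, n]$.

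The main obstacle is the upper estimate, and specifically getting the sharp growth exponent $n/(n-k)$: this is essentially entirely contained in the projective-space bound \cite[Theorem 5.12]{lehmann16}, so the remaining content is just the reduction to $\mathbb{P}^{n}$ — namely checking that a generic finite linear projection does not decrease the mobility count, which is exactly the contraction-free instance of the pushforward behavior recorded after Theorem \ref{mobcountstricttransform}.
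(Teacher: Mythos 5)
Your proof is correct and takes essentially the same route as the paper: the lower bound $\kappa(\alpha)\geq n-k$ via additivity of the mobility count under family sums is identical to the paper's argument. For the upper bound the paper simply cites \cite[Proposition 5.1]{lehmann16}, and your reduction to $\mathbb{P}^{n}$ by a general finite linear projection (using that pushforward along a dominant generically finite map does not decrease the mobility count) followed by the projective-space bound \cite[Theorem 5.12]{lehmann16} is just an unpacking of that citation, so there is no substantive difference.
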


\begin{proof}
The upper bound $\kappa(\alpha) \leq n$ is proved by \cite[Proposition 5.1]{lehmann16}.

If every positive multiple of $\alpha$ has vanishing mobility count, then $\kappa(\alpha) \in \{ -\infty, 0\}$.  Otherwise for some positive integer $s$ we have $\mc(s\alpha) > 0$.  Using additivity of the mobility count under family sums we see that $\mc(ms\alpha) \geq m \mc(s\alpha)$ so that $\kappa(\alpha) \geq n-k$.
\end{proof}

\subsection{Concentration of mobility count}

The following somewhat technical result shows that an increasing mobility count must be concentrated on families of irreducible cycles.

\begin{lem} \label{irrfamconcentrates}
Let $X$ be a projective variety of dimension $n$ and let $\alpha \in N_{k}(X)_{\mathbb{Q}}$. Suppose that $\kappa(\alpha) > n-k$.  Fix an ample divisor $H$ on $X$.  Then for any positive integer $M$, any positive constant $C$, and any sufficiently small positive $\epsilon$, there is some integer $m > M$ and an irreducible family of $k$-cycles $p$ such that $\mc(p) > C(H^{k} \cdot p)^{\frac{\kappa(\alpha)}{n-k}-\epsilon}$ and $m\alpha - [p]$ is the class of an effective $\mathbb{Z}$-cycle.
\end{lem}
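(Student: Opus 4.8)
The plan is to argue by a counting/averaging argument: if $\kappa(\alpha) > n-k$, then for arbitrarily large $m$ the mobility count $\mc(m\alpha)$ is large, and this forces one of the (boundedly many) irreducible components appearing in the witnessing family to carry most of the mobility. First I would fix $r$ with $n-k < r < \kappa(\alpha)/(n-k) \cdot (n-k) = \kappa(\alpha)$, chosen so that $r/(n-k) > \kappa(\alpha)/(n-k) - \epsilon$; by Definition \ref{iitakadimdef} there are arbitrarily large $m$ with $\mc(m\alpha) > m^{r}$. For such an $m$, choose a family $p\colon U \to W$ of effective cycles representing $m\alpha$ with $\mc(p) = \mc(m\alpha)$, and using the constructions recalled in the "Geometry of families" subsection we may assume $W$ is normal projective. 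Decompose the general member of $p$ into its irreducible components; after refining $W$ and base-changing as described there, we may assume $U = \bigcup_i U_i$ where each $U_i \to W$ has generically irreducible fibers, so $p$ is a family sum of irreducible families $p_i$ (with multiplicities $a_i$), and $m\alpha = \sum_i a_i [p_i]$ with each $a_i[p_i]$ — hence each $[p_i]$ — pseudo-effective and dominated by $m\alpha$.

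The key numerical input is additivity of mobility count under family sums (\cite[Lemma 4.9]{lehmann16}): $\mc(p) = \sum_i a_i \mc(p_i)$, while additivity of intersection numbers gives $H^k \cdot [p] = \sum_i a_i (H^k \cdot [p_i])$, with each term positive. Since $\mc(p) > m^r$ and the number $N$ of components is bounded (e.g. by $H^k \cdot [p] = m (H^k\cdot\alpha)$, as each irreducible effective $k$-cycle has $H^k$-degree at least $1$), some single component $p_i$ satisfies $a_i\mc(p_i) > m^r/N \geq m^r/(m\, H^k\cdot\alpha)$, i.e. $\mc(p_i) \geq m^{r}/(a_i \cdot m\, H^k\cdot\alpha)$. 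One then wants to compare this with a power of $D_i := H^k \cdot [p_i]$. Writing $D := H^k\cdot[p] = m(H^k\cdot\alpha)$, we have $a_i D_i \le D$, so $a_i \le D/D_i$ and hence $\mc(p_i) > m^{r}/ (N a_i) \ge m^{r} D_i / (N D) = m^{r-1}D_i/(N\,H^k\cdot\alpha)$. The crude bound $D_i \le D = m(H^k\cdot\alpha)$ gives $m \ge D_i/(H^k\cdot\alpha)$, so $m^{r-1} \ge (D_i/(H^k\cdot\alpha))^{r-1}$, whence $\mc(p_i) > C' D_i^{r}$ for a constant $C'$ depending only on $r$, $n$, $k$, and $H^k\cdot\alpha$. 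Choosing $r$ close enough to $\kappa(\alpha)$ (so that $r > \kappa(\alpha) - (n-k)\epsilon$, i.e. $r/(n-k) > \kappa(\alpha)/(n-k) - \epsilon$) and absorbing the constant by enlarging $C$ finishes the estimate $\mc(p_i) > C (H^k\cdot[p_i])^{\kappa(\alpha)/(n-k) - \epsilon}$; and since $a_i[p_i] \preceq m\alpha$ and $a_i \ge 1$, the class $m\alpha - [p_i]$ is effective.

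The main obstacle I expect is bookkeeping the two-sided trade-off between the multiplicity $a_i$, the degree $D_i$, and the mobility $\mc(p_i)$ so that the exponent comes out right in terms of $D_i$ rather than $m$ — in particular making sure the "loss" from the component having small degree is not worse than the $\epsilon$ we are allowed, which is why one must pick $r$ strictly between $n-k$ and $\kappa(\alpha)$ and only afterwards let it approach $\kappa(\alpha)$. A secondary technical point is justifying the reduction to a family sum of \emph{irreducible} families without decreasing the mobility count; this is exactly the "gluing" construction recalled in the "Geometry of families" subsection, and I would cite \cite{lehmann16} for it rather than reprove it. Once these reductions are in place, the rest is the elementary pigeonhole and the power-of-$m$ comparison sketched above.
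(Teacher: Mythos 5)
Your setup (decompose a maximal family for $m\alpha$ into irreducible families $p_i$ with multiplicities, use additivity of the mobility count under family sums and of the degree $H^k\cdot[p_i]$, and note that $m\alpha-[p_i]$ is automatically effective) matches the paper's. But the execution via pigeonhole has a genuine gap. In your final chain you arrive at $\mc(p_i) > m^{r-1}D_i/(N\,H^k\cdot\alpha)$ and then claim $\mc(p_i) > C'D_i^{r}$ for a constant $C'$ ``depending only on $r,n,k,H^k\cdot\alpha$.'' Tracing the substitution, what you actually get is $\mc(p_i) \geq D_i^{r}/(N (H^k\cdot\alpha)^{r})$, and $N$ (the number of components) can be as large as $m(H^k\cdot\alpha)$, so your ``constant'' degrades like $1/m$. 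The $\epsilon$ of slack in the exponent cannot absorb this, because the selected component may have bounded degree $D_i$ (e.g.\ $D_i=1$) while $m\to\infty$, in which case $D_i^{\epsilon}$ does not grow at all. Worse, the selection rule itself is flawed: the component maximizing $a_i\mc(p_i)$ need not satisfy the desired bound. For instance, with $t=\kappa(\alpha)/(n-k)$ one can have a single component of degree $\approx m$ with $a_i\mc(p_i)\approx C m^{t-\epsilon}$ (failing the bound) that beats many low-contribution components each with $a_i\mc(p_i)\approx m^{s-\delta}$ for $s-\delta<t-\epsilon$, even though those other components are the ones carrying the required growth. So the pigeonhole identifies the wrong component, and no choice of constants repairs the chain of inequalities.

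The missing idea is to run the argument in the contrapositive and use convexity, which is what the paper does: assume \emph{every} irreducible component satisfies $\mc(p_i)\leq C(H^k\cdot p_i)^{t-\epsilon}$; then
\begin{equation*}
\mc(m\alpha)=\sum_i \mc(p_i)\leq C\sum_i (H^k\cdot p_i)^{t-\epsilon}\leq C\Bigl(\sum_i H^k\cdot p_i\Bigr)^{t-\epsilon}\leq C(H^k\cdot\alpha)^{t-\epsilon}m^{t-\epsilon},
\end{equation*}
where the middle inequality is superadditivity of $x\mapsto x^{t-\epsilon}$, valid because $t-\epsilon\geq 1$ (this is exactly where the hypothesis $\kappa(\alpha)>n-k$ and the ``sufficiently small $\epsilon$'' enter). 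This contradicts $\limsup_m \mc(m\alpha)/m^{s}>0$ for any $s$ with $t-\epsilon<s<t$. This sidesteps having to identify \emph{which} component works, which is precisely what your direct pigeonhole cannot do. Your reduction to irreducible families and the effectivity of $m\alpha-[p_i]$ are fine as stated; it is only the quantitative extraction of a single good component that needs to be replaced by the convexity argument.
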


\begin{proof}
Suppose for a contradiction that we can choose an $M$, $C$, and $\epsilon$ violating the condition.  For any $m>M$ such that $m\alpha \in N_{k}(X)_{\mathbb{Z}}$, choose finitely many irreducible families $p_{i}$ whose family sum gives a family of cycles representing $\alpha$ of maximal mobility count.  Since each class $m\alpha-[p_{i}]$ represents an effective $\mathbb{Z}$-cycle, we find that for every sufficiently large $m$
\begin{align*}
\mc(m\alpha) & = \sum_{i} \mc(p_{i}) \\
& \leq \sum_{i} C(H^{k} \cdot p_{i})^{\frac{\kappa(\alpha)}{n-k} - \epsilon} \\
& \leq C(H^{k} \cdot \alpha)^{\frac{\kappa(\alpha)}{n-k} - \epsilon} m^{\frac{\kappa(\alpha)}{n-k} - \epsilon}
\end{align*}
where the last inequality follows from convexity.  This is a contradiction to the expected growth rate of $\mc(m\alpha)$.
\end{proof}

\subsection{Ample intersections}

We next analyze the behavior of $\kappa(X)$ under intersections with ample divisors.

\begin{lem} \label{linearseriesintest}
Let $X$ be a projective variety of dimension $n$.  Suppose that $p$ is a family of irreducible $k$-cycles and $r: \mathcal{D} \to V$ is a linear series of effective Cartier divisors.  Then
\begin{equation*}
\mc(p \cdot \mathcal{D}) \geq \min\{ \mc(p) - 1, \mc(r) \}.
\end{equation*}
\end{lem}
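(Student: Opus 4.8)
The plan is to compare the two families by intersecting members of $p$ with divisors drawn from the linear series $\mathcal{D}$. Write $b_0 = \min\{\mc(p)-1, \mc(r)\}$; if $b_0 \leq 0$ there is nothing to prove, so assume $b_0 \geq 1$. The family $p \cdot \mathcal{D}$ is formed by taking, over a general point $(w,v) \in W \times V$, the generic intersection of the cycle $p_w$ with the divisor $D_v$; this makes sense because $p$ has irreducible members, so for general $v$ the member $p_w$ has no component contained in $D_v$, and the resulting intersection is an effective $(k-1)$-cycle whose components we track with appropriate multiplicities. I would first spell this out carefully, noting that the base of $p\cdot\mathcal{D}$ maps dominantly to an open subset of $W\times V$.

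Next I would exhibit the required dominance. We want to show that the $b_0$-fold fibered self-product of (the incidence variety over) $p\cdot \mathcal{D}$ dominates $X^{\times b_0}$. Fix $b_0$ general points $x_1,\dots,x_{b_0}$ of $X$. Since $\mc(r) \geq b_0$, there is a divisor $D_v$ in the linear series passing through all of $x_1,\dots,x_{b_0}$; fix such a $v$, general subject to this constraint. Now I want members of $p$ passing through the $x_i$ and meeting $D_v$ transversely in the appropriate sense. Here is the key point: because $\mc(p) \geq b_0+1$, the members of $p$ can be made to pass through any $b_0$ general points \emph{plus} one extra general point; equivalently, through $b_0$ general points of $X$ the members of $p$ sweep out a family of positive-dimensional image in $X$, so a general such member is not contained in the divisor $D_v$ and meets it properly. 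Concretely, for each $i$ I would use $\mc(p) \geq b_0 + 1$ to find a member $C_i$ of $p$ containing $x_i$ together with one auxiliary general point chosen off $D_v$; then $C_i \cap D_v$ is a proper intersection containing $x_i$, so $x_i$ lies in a member of $p \cdot \mathcal{D}$. Assembling these over $i=1,\dots,b_0$ (all taken over the same $v$) shows the $b_0$-fold self-product of $p\cdot\mathcal{D}$ dominates $X^{\times b_0}$, which is exactly $\mc(p\cdot\mathcal{D}) \geq b_0$.

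The subtlety I expect to be the main obstacle is the honest verification that a general member of $p$ through $b_0$ prescribed general points, perturbed by the one extra free point, is \emph{not} a component of $D_v$ and meets $D_v$ in the expected codimension, uniformly as the configuration varies --- i.e., that these choices can be made simultaneously over a dense open subset of the relevant parameter space so that the fibered product construction genuinely produces a dominant map rather than just a point-by-point existence statement. This is a standard "generic flatness / constructibility" argument: one works over the open locus of $W\times V$ where all the incidence conditions are generically satisfied, invokes the irreducibility of the members of $p$ to guarantee proper intersection with a general $D_v$, and uses that dominance of a morphism of varieties is detected on a dense open set (as in the shrinking argument of \cite[Proposition 4.5]{lehmann16}). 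Once the set-up is arranged so that the intersection family $p\cdot\mathcal{D}$ lives over an open subset of $W\times V$ on which everything is well-behaved, the counting argument above goes through and yields the claimed bound $\mc(p\cdot\mathcal{D}) \geq \min\{\mc(p)-1,\mc(r)\}$.
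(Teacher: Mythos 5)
Your overall strategy is the paper's: pick a divisor $D_v$ in the linear series through the general points (possible since $b_0\leq \mc(r)$), use $\mc(p)\geq b_0+1$ to force a member of $p$ off $\Supp(D_v)$ by means of an auxiliary general point, and take cycle-theoretic intersections. But the ``Concretely'' step is where the argument breaks. The statement $\mc(p\cdot\mathcal{D})\geq b_0$ means that the $b_0$-fold fibered product of the incidence variety \emph{over the base} of $p\cdot\mathcal{D}$ dominates $X^{\times b_0}$; equivalently, a \emph{single} member of $p\cdot\mathcal{D}$ --- one pair $(w,v)$ --- must contain all of $x_1,\dots,x_{b_0}$ simultaneously. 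Producing, for each $i$, a separate member $C_i$ of $p$ with $x_i\in C_i\cap D_v$ yields $b_0$ different members of $p\cdot\mathcal{D}$, each through one point; assembling these over a common $v$ only shows that the \emph{non-fibered} product of $b_0$ copies of the incidence variety hits $(x_1,\dots,x_{b_0})$, which gives nothing beyond $\mc(p\cdot\mathcal{D})\geq 1$. Note also that this version only ever invokes $\mc(p)\geq 2$ (one prescribed point plus one auxiliary point per member), so the hypothesis $\mc(p)\geq b_0+1$ would never actually be used --- a sign the accounting is off.

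The fix is precisely the sentence you wrote just before the flawed step, and it is what the paper does: since $\mc(p)\geq b_0+1$, there is a \emph{single} irreducible member $Z$ of $p$ containing all of $x_1,\dots,x_{b_0}$ together with one additional general point chosen in $X\smallsetminus\Supp(D_v)$. Irreducibility of $Z$ then forces $Z\not\subset\Supp(D_v)$, so the cycle-theoretic intersection $Z\cdot D_v$ is one member of $p\cdot\mathcal{D}$ through all $b_0$ general points; the uniformity and constructibility remarks in your final paragraph are what upgrade this pointwise existence statement to dominance of the fibered self-product. With that single correction your proof coincides with the paper's.
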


Recall that $p \cdot \mathcal{D}$ denotes the family of cycles which are intersections of general elements of $p$ with general elements of $\mathcal{D}$.

\begin{proof}
It suffices to consider the case when $\mc(p) > 1$.  Suppose we fix a Cartier divisor $D$ through $\mc(r)$ general points $x_{i}$ of $X$.  There is an irreducible element $Z$ of the family $p$ containing a general point of $X - \Supp(D)$ and any $\min\{ \mc(p)-1,\mc(r) \}$ of the $x_{i}$.  Since $Z$ is not contained in $\Supp(D)$, we can take cycle-theoretic intersections to obtain the desired family.
\end{proof}

\begin{prop} \label{ampleintandiitakadim}
Let $X$ be a projective variety of dimension $n$ and let $\alpha \in N_{k}(X)_{\mathbb{Q}}$.  Suppose that $H$ is an ample $\mathbb{Q}$-Cartier divisor on $X$.  We have $\kappa(\alpha \cdot H) \geq \kappa(\alpha)$, with strict inequality unless $\kappa(\alpha) \in \{ -\infty, 0, n \}$.
\end{prop}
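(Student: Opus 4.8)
The plan is to reduce the statement to the behavior of mobility counts of explicit families constructed by intersecting with divisors in the linear series $|mH|$, and to play off the Iitaka dimension of $\alpha$ against the dimension $n$ in a scaling argument. For the inequality $\kappa(\alpha \cdot H) \geq \kappa(\alpha)$, the case $\kappa(\alpha) = -\infty$ is trivial and the case $\kappa(\alpha) = 0$ follows since $\alpha \cdot H$ is pseudo-effective by Lemma \ref{intprop}(1); so assume $\kappa(\alpha) \geq n-k$. By Lemma \ref{irrfamconcentrates}, for suitable large $m$ there is an irreducible family $p$ of $k$-cycles with $m\alpha - [p]$ effective and $\mc(p) > C(H^{k}\cdot p)^{\frac{\kappa(\alpha)}{n-k}-\epsilon}$. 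After clearing denominators so that $H$ is an honest very ample Cartier divisor (replacing $H$ by a multiple only rescales everything, and does not affect $\kappa$ by homogeneity of $\mc$ arguments), I would intersect $p$ with the complete linear series $\mathcal{D} = |H|$, whose own mobility count $\mc(r)$ is roughly $(H^{n}) \cdot (\text{something growing})$ — more precisely, for the $d$-th symmetric power / the linear series $|dH|$ the mobility count grows like $d^{n}$, which dominates any power $d^{r}$ of the degree of the family. Applying Lemma \ref{linearseriesintest} repeatedly, the resulting family $p \cdot H$ has mobility count at least $\mc(p) - 1$ while its degree is $(H^{k-1}\cdot p \cdot H) = H^{k}\cdot p$; since $p \cdot H$ represents a class $\preceq m(\alpha \cdot H)$, this forces $\frac{\kappa(\alpha \cdot H)}{n-(k-1)} \geq \frac{\kappa(\alpha)}{n-k} - \epsilon$, and letting $\epsilon \to 0$ gives $\kappa(\alpha\cdot H) \geq \frac{n-k+1}{n-k}\kappa(\alpha) \geq \kappa(\alpha)$, which is the claimed inequality and in fact is strict whenever $\kappa(\alpha) > 0$.

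The subtlety is the case distinction on strictness: the above already shows strictness when $0 < \kappa(\alpha) < \infty$ and $\kappa(\alpha) > 0$ — wait, more carefully, the factor $\frac{n-k+1}{n-k} > 1$ gives strict inequality as long as $\kappa(\alpha) > 0$, so the only remaining exclusions are $\kappa(\alpha) \in \{-\infty, 0\}$ (where the left side could equal the right) and $\kappa(\alpha) = n$ (where $\kappa(\alpha \cdot H) \leq n-(k-1) \leq n$ forces... no). Here I need to be careful: the upper bound $\kappa(\alpha \cdot H) \leq n$ always holds, so when $\kappa(\alpha) = n$ we cannot have strict inequality. For $\kappa(\alpha) = n$, I would invoke the big-ness characterization from the cited Example: $\kappa(\alpha) = n$ iff $\alpha$ is big, and by Lemma \ref{intprop}(2) then $\alpha \cdot H$ is big, so $\kappa(\alpha \cdot H) = n = \kappa(\alpha)$, and equality is expected and correct. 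For $\kappa(\alpha) = 0$, one genuinely can have $\kappa(\alpha \cdot H) = 0$ as well (e.g., if $\alpha \cdot H$ fails to move), so no strictness is claimed or needed.

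The main obstacle I anticipate is the quantitative comparison $\mc(r) \gg (\deg p)^{r}$ needed to make Lemma \ref{linearseriesintest} bite: I must choose the linear series $\mathcal{D}$ (some $|dH|$ with $d$ chosen as a function of $\deg p$) so that $\mc(\mathcal{D})$ exceeds $\mc(p) - 1$, and then control how the degree of $p \cdot \mathcal{D}$ depends on $d$ — intersecting with $|dH|$ multiplies the degree by $d$, which must be fed back into the scaling exponent without destroying it. The cleanest route is probably to intersect with a \emph{single} general member of $|H|$ (so $d = 1$) but over a family: that is, use the universal family $r: \mathcal{D} \to |H|$ itself, whose mobility count is $\dim|H| = h^{0}(X,H) - 1$, a fixed constant independent of $m$ — this is too small. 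So instead one must let the linear series grow with $m$: take $\mathcal{D}_{m} = |d_{m}H|$ with $d_{m} \to \infty$ slowly, note $\mc(\mathcal{D}_{m}) \sim c\, d_{m}^{n}$ while $[p]\cdot\mathcal{D}_{m}$ has degree $d_{m}(H^{k}\cdot p)$, and optimize. Handling this optimization — ensuring the $\limsup$ defining $\kappa(\alpha\cdot H)$ genuinely picks up the improved exponent — is the technical heart, and I would model it on the analogous degree-versus-mobility bookkeeping already used in the proof of Lemma \ref{irrfamconcentrates} and in \cite{lehmann16}.
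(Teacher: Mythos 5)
Your overall strategy is the paper's: dispose of $\kappa(\alpha)\in\{-\infty,0,n\}$ via Lemma \ref{intprop}, and for $n-k\leq\kappa(\alpha)<n$ intersect with a linear series $|d_{m}H|$ whose size $d_{m}$ grows with $m$, using Lemma \ref{linearseriesintest} and optimizing $d_{m}$ against the degree inflation. You also correctly identify the technical crux (a fixed linear series has bounded mobility count, so $d_{m}\to\infty$ is forced). However, there is a genuine gap in how you produce the irreducible family that Lemma \ref{linearseriesintest} requires. Lemma \ref{irrfamconcentrates} hands you an irreducible $p$ with $m\alpha-[p]$ effective and $\mc(p)>C(H^{k}\cdot p)^{\kappa(\alpha)/(n-k)-\epsilon}$, but it gives \emph{no lower bound} on $H^{k}\cdot p$ or on $\mc(p)$ relative to $m$: the degree $\delta=H^{k}\cdot p$ could be $O(1)$ while $m$ is huge. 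After intersecting with $|d_{m}H|$ you obtain a family subordinate to $md_{m}(\alpha\cdot H)$ whose mobility count is controlled only by $\delta$, and when you renormalize by the multiple $m'=md_{m}$ the estimate evaporates. (Using instead the single largest irreducible component of a maximal family costs a factor of the number of components, $\sim m$, which destroys the exponent.) The paper avoids this by keeping the \emph{entire} maximal family for $m\alpha$, decomposing it into its irreducible components $p_{i}$, applying Lemma \ref{linearseriesintest} to each, and summing $\min\{\mc(p_{i})-1,\mc(|d_{m}H|)\}$; the total loss is at most one per component, i.e.\ at most $m(H^{k}\cdot\alpha)$, which is lower order. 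Lemma \ref{irrfamconcentrates} is the right tool for \emph{upper} bounds on $\kappa$ (as in the contracted-classes section), not for the lower bound needed here.

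Secondly, your asserted conclusion $\kappa(\alpha\cdot H)\geq\frac{n-k+1}{n-k}\kappa(\alpha)$ is not what the optimization actually yields. With $d_{m}\sim m^{\kappa(\alpha)/n(n-k)}$ (the smallest choice making $\mc(|d_{m}H|)\gtrsim\mc(m\alpha)$), the resulting family represents the multiple $m'=md_{m}\sim m^{1+\kappa(\alpha)/n(n-k)}$ of $\alpha\cdot H$, and rewriting the mobility count $\sim m^{\kappa(\alpha)/(n-k)}$ in terms of $m'$ gives
\begin{equation*}
\kappa(\alpha\cdot H)\;\geq\;\frac{n-k+1}{\,n-k+\kappa(\alpha)/n\,}\,\kappa(\alpha),
\end{equation*}
which is weaker than your claim but still strictly larger than $\kappa(\alpha)$ exactly when $\kappa(\alpha)<n$ — so the proposition survives, but the bookkeeping you flag as "the technical heart" has to be carried out, and it changes the constant.
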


\begin{proof}
Since the Iitaka dimension is invariant under rescaling, it suffices to prove this when $H$ is an ample Cartier divisor satisfying the condition
\begin{equation*}
\mc(|mH|) \geq cm^{n}
\end{equation*}
for some constant $c>1$.

The statement is obvious when $\kappa(\alpha) = -\infty$.  We next analyze the special cases $\kappa(\alpha) \in \{0, n \}$.  If some multiple of $\alpha$ is represented by an effective cycle, then by Lemma \ref{intprop} the same is true for some multiple of $\alpha \cdot H$, showing the inequality when $\kappa(\alpha)=0$.
If $\alpha$ is big, then $\alpha \cdot H$ is also big by Lemma \ref{intprop}, showing the inequality when $\kappa(\alpha) =n$. 

Next suppose that $\kappa(\alpha) = n-k$.  Clearly we can find effective cycles with class proportional to $\alpha \cdot H$ through any general point of $X$.  Thus $\kappa(\alpha \cdot H) \geq n-k+1 > \kappa(\alpha)$.

Finally, consider the case when $n-k < \kappa(\alpha) < n$.  Fix a positive constant $C$ and an $\epsilon > 0$.  Suppose that $p_{i}: U_{i} \to W$ are the irreducible families of $k$-cycles composing a family $p$ representing $m\alpha$ of maximal mobility count; as described in Section \ref{familysec}, we may assume that each $U_{i}$ has irreducible generic fiber.  For $m$ sufficiently large, we have
\begin{equation*}
\mc(|\lceil m^{\frac{\kappa(\alpha)}{n(n-k)}} \rceil H |) \geq cm^{\frac{\kappa(\alpha)}{n-k}} > Cm^{\frac{\kappa(\alpha)-\epsilon}{n-k}}.
\end{equation*}
Note that $p$ can have at most $H^{k} \cdot m\alpha$ components.  Thus for $m$ sufficiently large, we have
\begin{align*}
\mc(m \lceil m^{\frac{\kappa(\alpha)}{n(n-k)}} \rceil \alpha \cdot H) & \geq \sum_{i} \mc(p_{i} \cdot |\lceil m^{\frac{\kappa(\alpha)}{n(n-k)}} \rceil H |) \\
& \geq \sum_{i} \min\{ \mc(p_{i}) -1, \mc(|\lceil m^{\frac{\kappa(\alpha)}{n(n-k)}}\rceil H |) \} \textrm{ by Lemma \ref{linearseriesintest} } \\
& \geq Cm^{\frac{\kappa(\alpha) - \epsilon}{n-k}} - m(H^{k} \cdot \alpha).
\end{align*}
Note that $m \cdot \lceil m^{\kappa(\alpha)/n(n-k)} \rceil \leq 2 m^{1+\kappa(\alpha)/n(n-k)}$.  Thus, by renormalizing and taking roots, for any positive constant $\widetilde{C}$ and any $\epsilon > 0$ we have for $m$ sufficiently large
\begin{equation*}
\mc(m\alpha \cdot H) \geq \widetilde{C}m^{\frac{\kappa(\alpha)-\epsilon}{n-k+\frac{\kappa(\alpha)}{n}}}.
\end{equation*}
The codimension of $\alpha \cdot H$ is $n-k+1$.  Since the equality above is true for any positive $\widetilde{C}$ and any sufficiently small $\epsilon$, by taking limits we find
\begin{equation*}
\kappa(\alpha \cdot H) \geq \frac{n-k+1}{n-k+\frac{\kappa(\alpha)}{n}} \kappa(\alpha).
\end{equation*}
\end{proof}

\subsection{Birational behavior of the Iitaka dimension}

Suppose that $\phi: Y \to X$ is a birational morphism of projective varieties.  Given any class $\beta \in N_{k}(Y)_{\mathbb{Q}}$, Theorem \ref{mobcountstricttransform} shows that $\kappa(\phi_{*}\beta) \geq \kappa(\beta)$.  In this section, we address the opposite question: given a class $\alpha \in N_{k}(X)_{\mathbb{Q}}$, what are the possible Iitaka dimensions of classes $\beta$ satisfying $\phi_{*}\beta = \alpha$?

\begin{thrm} \label{birbeh}
Let $\phi: Y \to X$ be a birational morphism of projective varieties.  Suppose $\alpha \in N_{k}(X)_{\mathbb{Q}}$.  Then there is a class $\beta \in N_{k}(Y)_{\mathbb{Q}}$ satisfying $\phi_{*}\beta = \alpha$ and $\kappa(\beta) = \kappa(\alpha)$.
\end{thrm}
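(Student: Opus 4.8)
The plan is to build $\beta$ by taking a suitable strict transform of a well-chosen family representing $\alpha$, then correcting by a $\phi$-exceptional class that has no effect on the Iitaka dimension. The inequality $\kappa(\phi_*\beta) \geq \kappa(\beta)$ from Theorem~\ref{mobcountstricttransform} gives one direction automatically once we arrange $\phi_*\beta = \alpha$, so the real content is producing a $\beta$ with $\kappa(\beta) \geq \kappa(\alpha)$.

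First I would dispose of the degenerate cases. If $\kappa(\alpha) = -\infty$, take $\beta = \phi^*\alpha$ if available, or more robustly note that no multiple of $\alpha$ is effective, so we must choose $\beta$ with $\phi_*\beta = \alpha$ and $\kappa(\beta) = -\infty$; since $\kappa(\phi_*\beta) \geq \kappa(\beta)$ this forces $\kappa(\beta) \in \{-\infty\}$ exactly when no multiple of $\beta$ is effective, and any lift with $\phi_*\beta = \alpha$ works because an effective multiple of $\beta$ would push to an effective multiple of $\alpha$. If $\kappa(\alpha) = 0$, pick any effective cycle $W$ with $[W]$ proportional to $\alpha$, let $\widetilde{W}$ be its strict transform, and set $\beta$ to be the corresponding rescaled class (corrected by a $\phi$-exceptional cycle so that $\phi_*\beta = \alpha$ exactly); then $\beta$ is effective so $\kappa(\beta) \geq 0$, and equality holds by the bound from Theorem~\ref{mobcountstricttransform}.

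For the main case $\kappa(\alpha) \geq n-k$, I would argue as follows. For each sufficiently divisible $m$, choose a family of effective $k$-cycles $p_m: U_m \to W_m$ representing $m\alpha$ with $\mc(p_m) = \mc(m\alpha)$, and following Section~\ref{familysec} arrange that each component of $U_m$ has generically irreducible fibers and dominates $X$ (components not dominating $X$ contribute nothing to the mobility count since general points avoid their images). Take the strict transform family on $Y$; by the generalization of Theorem~\ref{mobcountstricttransform} stated just after it — or directly, since the general member still dominates under $s$ — the strict transform family $(\phi^{-1})_*p_m$ has mobility count at least $\mc(p_m)$. The class of a general member of $(\phi^{-1})_*p_m$ need not be independent of $m$, but its pushforward is $m\alpha$, so it has the form $m\alpha' + E_m$ for a fixed lift $\alpha'$ of $\alpha$ (obtained, say, as the strict transform of one reference effective cycle representing a fixed multiple of $\alpha$) and a $\phi$-exceptional class $E_m$. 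The key subtlety — and I expect this to be the main obstacle — is that the exceptional corrections $E_m$ may vary with $m$ and need not be bounded by a single multiple of a fixed exceptional divisor, so one cannot immediately read off a single class $\beta$ whose multiples $m\beta$ are dominated by these strict transforms.

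To overcome this, I would use that $N_k(Y)_{\mathbb{Q}}$ is finite-dimensional: the classes $E_m/m$ lie in the (finite-dimensional) span of $\phi$-exceptional classes, and after passing to a subsequence along which $E_m/m$ converges, I would set $\beta$ to be $\alpha'$ plus a fixed rational exceptional perturbation chosen so that for infinitely many $m$ in an arithmetic progression, $m\beta - [(\text{general member of }(\phi^{-1})_*p_m)]$ is pseudo-effective, in fact effective and $\phi$-exceptional; the point is that a $\phi$-exceptional effective correction pushes forward to zero, so adding it back via a family sum changes neither $\phi_*\beta$ nor, by Theorem~\ref{mobcountstricttransform} applied to the exceptional piece, the relevant mobility bound. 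Then for those $m$, $\mc(m\beta) \geq \mc((\phi^{-1})_*p_m) \geq \mc(m\alpha)$, so $g_r(\beta) \geq g_r(\alpha)$ for every $r$ along this subsequence, giving $\kappa(\beta) \geq \kappa(\alpha)$; combined with $\kappa(\alpha) = \kappa(\phi_*\beta) \geq \kappa(\beta)$ we conclude $\kappa(\beta) = \kappa(\alpha)$. Finally I would rescale $\beta$ using Lemma~\ref{iitakadimhom} to ensure $\phi_*\beta = \alpha$ exactly and that $\beta \in N_k(Y)_{\mathbb{Q}}$, clearing denominators in the exceptional correction as needed.
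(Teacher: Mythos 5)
Your overall strategy is the same as the paper's (take strict transforms of mobility-optimal families on $X$, find a single class on $Y$ dominating all of them, then correct the pushforward), and you correctly identify the central difficulty: the exceptional corrections $E_m$ vary with $m$. But your resolution of that difficulty has a genuine gap. You write that ``the classes $E_m/m$ lie in the (finite-dimensional) span of $\phi$-exceptional classes, and after passing to a subsequence along which $E_m/m$ converges\ldots'' --- finite-dimensionality does not give you a convergent subsequence; you need \emph{boundedness} of $E_m/m$, equivalently of the normalized strict-transform classes $[q_m]/m$, and this is false for arbitrary effective classes with bounded pushforward (one can add arbitrarily large $\phi$-exceptional effective classes without changing the pushforward). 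The paper's fix is to observe that, after discarding non-dominant components, the strict transforms are \emph{movable} classes, and to invoke the nontrivial compactness statement (Lemma~\ref{compactnessofmov}, from \cite{fl13}) that $\{\beta \in \Mov_k(Y) \mid \phi_*\beta \preceq \alpha\}$ is compact. Without citing or proving something of this kind, your subsequence does not exist.

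A second, smaller gap: even granting that $E_m/m$ converges, your chosen perturbation only makes $m\beta - [q_m]$ \emph{pseudo-effective} (or at best places it in the interior of a cone); to run the family-sum argument and conclude $\mc(m\beta) \geq \mc(q_m)$ you need it to be represented by an actual effective $\mathbb{Z}$-cycle. Your phrase ``pseudo-effective, in fact effective'' asserts this jump without justification. The paper handles it with a Gordan-type lattice argument (Lemma~\ref{conelem}, building on \cite[Lemma 4.13]{fl13}), producing $\widetilde{\beta}$ in the effective semigroup $T_Y$ such that every lattice point of $m(\widetilde{\beta} - \mathcal{S})$ lies in $T_Y$ for all $m$ simultaneously --- note this also removes your need to restrict to a subsequence or arithmetic progression, which in your version would require an additional monotonicity argument to ensure the $\limsup$ along the subsequence detects $\kappa(\alpha)$. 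Your treatment of the degenerate cases and of the final pushforward correction is fine.
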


Before proving this theorem, we need to recall some results of \cite{fl13} concerning the movable cone and its behavior under birational maps.

\begin{defn}
The movable cone $\Mov_{k}(X)$ is the closure of the cone generated by classes of irreducible subvarieties which deform to cover $X$.  We say that $\alpha \in N_{k}(X)$ is movable if it lies in $\Mov_{k}(X)$.
\end{defn}

\begin{lem}[\cite{fl13} Corollary 6.6] \label{compactnessofmov}
Let $\phi: Y \to X$ be a birational morphism of projective varieties.  Fix a class $\alpha \in \Eff_{k}(X)$.  Then the set of classes
\begin{equation*}
\mathcal{S} := \{ \beta \in \Mov_{k}(Y) | \phi_{*}\beta \preceq \alpha \}
\end{equation*}
is compact.
\end{lem}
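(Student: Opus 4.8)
The plan is to show that $\mathcal{S}$ is both closed and bounded in $N_{k}(Y)$. Closedness is immediate: the pushforward $\phi_{*}$ is linear, hence continuous, the cone $\Mov_{k}(Y)$ is closed by definition, and the condition $\phi_{*}\beta \preceq \alpha$ says $\alpha - \phi_{*}\beta \in \Eff_{k}(X)$, a closed condition; thus $\mathcal{S}$ is an intersection of closed sets. All the work is in boundedness, which I would reduce to producing a single ample divisor $A$ on $Y$ and a constant $c$ with $A^{k} \cdot \beta \le c$ for every $\beta \in \mathcal{S}$. Indeed, since $A$ is ample the degree functional $A^{k}\cdot(-)$ is strictly positive on $\Eff_{k}(Y)\setminus\{0\}$ (its positivity on effective generators extends to the closed salient cone by a standard normalization argument), so each of its sublevel sets meets $\Eff_{k}(Y)$, and hence $\Mov_{k}(Y)$, in a bounded set.

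To produce such a bound I would compare $A$ to a pullback from $X$. Fix an ample divisor $H$ on $X$; then $\phi^{*}H$ is nef and big, so for $m\gg 0$ the class $F := \phi^{*}(mH) - A = m\phi^{*}H - A$ is big, and in particular I may represent it by an effective $\mathbb{R}$-divisor. The key point is a telescoping comparison valid for movable classes: since $A$ and $\phi^{*}(mH)$ are both nef, replacing one factor of $\phi^{*}(mH)$ by $A$ changes the intersection against $\beta$ by a term of the shape $F \cdot A^{i}\cdot(\phi^{*}(mH))^{k-1-i}\cdot\beta$. For $\beta\in\Mov_{k}(Y)$ every such term is $\ge 0$, and therefore
\[
A^{k}\cdot\beta \;\le\; A^{k-1}\cdot\phi^{*}(mH)\cdot\beta \;\le\;\cdots\;\le\;(\phi^{*}(mH))^{k}\cdot\beta .
\]
By the projection formula the right-hand side equals $m^{k}\,H^{k}\cdot\phi_{*}\beta$, and since $\alpha-\phi_{*}\beta\in\Eff_{k}(X)$ and $H^{k}$ is nef we get $H^{k}\cdot\phi_{*}\beta\le H^{k}\cdot\alpha$. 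Hence $A^{k}\cdot\beta\le m^{k}(H^{k}\cdot\alpha)=:c$ uniformly on $\mathcal{S}$, which is exactly the bound sought.

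The engine of the argument, and the step I expect to require the most care, is the non-negativity of the intermediate terms $F\cdot N_{1}\cdots N_{k-1}\cdot\beta$ for $\beta\in\Mov_{k}(Y)$, where $F$ is effective and the $N_{j}$ are nef. I would prove this directly from the definition of the movable cone: it suffices to treat a generator $\beta=[V]$ with $V$ deforming to cover $Y$, since the inequality then passes to non-negative combinations and to the closure. For a general member of the covering family, $V$ is not contained in the augmented base locus of $F$, so $F|_{V}$ is pseudo-effective while each $N_{j}|_{V}$ is nef; the intersection number in question is then a product of a pseudo-effective class with nef classes on the $k$-dimensional variety $V$, hence $\ge 0$. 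The only delicacy is that an extremal class of $\Mov_{k}(Y)$ need not itself be the class of a covering subvariety, but this causes no trouble: the inequality is a closed condition, so it is inherited by limits of covering classes, and limits are all the telescoping comparison uses.

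Finally I would assemble the pieces: boundedness from the uniform estimate $A^{k}\cdot\beta\le c$ together with the coercivity of $A^{k}\cdot(-)$ on $\Eff_{k}(Y)$, and closedness as above, give that $\mathcal{S}$ is compact. I note that the same conclusion can be packaged more conceptually as the properness of $\phi_{*}|_{\Mov_{k}(Y)}$, equivalently $\ker\phi_{*}\cap\Mov_{k}(Y)=\{0\}$, since the constraint $\phi_{*}\beta\preceq\alpha$ confines $\phi_{*}\beta$ to the compact slice $\{\delta\in\Eff_{k}(X): 0\preceq\delta\preceq\alpha\}$; the telescoping estimate is precisely what verifies this properness by hand, bypassing the need to analyze the extremal rays of the movable cone directly.
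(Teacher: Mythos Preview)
The paper does not supply a proof of this lemma; it is simply quoted from \cite{fl13}. Your argument is correct and is essentially the expected one: the crucial positivity $F\cdot N_{1}\cdots N_{k-1}\cdot\beta\ge 0$ for an effective Cartier divisor $F$, nef divisors $N_{j}$, and movable $\beta$ is precisely the defining intersection-theoretic property of $\Mov_{k}$ exploited in \cite{fl13}, and your telescoping comparison together with the projection formula is the natural way to convert it into the uniform bound $A^{k}\cdot\beta\le m^{k}(H^{k}\cdot\alpha)$.
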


We also need:

\begin{lem} \label{conelem}
Let $M$ be a real vector space, $L \subset M$ a full rank lattice and $T \subset L$ a subsemigroup which generates $L$.  For any compact subset $S \subset M$, there is an element $\widetilde{\beta} \in T$ such that
\begin{equation*}
m(\widetilde{\beta} - S) \cap L \subset T
\end{equation*}
for any positive integer $m$.
\end{lem}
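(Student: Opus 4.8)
The plan is to reduce the statement to a purely combinatorial fact about subsemigroups of lattices and then finish with a rescaling argument. Fix once and for all a Euclidean norm $\|\cdot\|$ on the finite-dimensional space $M$. Since $S$ is compact, $\rho := \sup_{x\in S}\|x\|$ is finite and $S \subseteq \overline{B}(0,\rho)$; as $m(\widetilde{\beta}-S) \subseteq m(\widetilde{\beta}-\overline{B}(0,\rho)) = \overline{B}(m\widetilde{\beta},m\rho)$ for every $m$, it is enough to produce $\widetilde{\beta}\in T$ with $\overline{B}(m\widetilde{\beta},m\rho)\cap L \subseteq T$ for all positive integers $m$.

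The combinatorial heart of the argument is the claim that \emph{there is a finitely generated, full-dimensional rational cone $P\subseteq M$ and an element $w\in T$ with $w+(P\cap L)\subseteq T$.} To prove it, choose $g_{1},\dots,g_{r}\in T$ that both generate $L$ as a group and span $M$ as a real vector space; this is possible because $T$ generates the full-rank lattice $L$. Set $P=\sum_{i}\mathbb{R}_{\geq 0}g_{i}$, which is full-dimensional since the $g_{i}$ span $M$, and $Q=\sum_{i}\mathbb{Z}_{\geq 0}g_{i}\subseteq T$. By Gordan's lemma $P\cap L$ is a finitely generated semigroup; let $h_{1},\dots,h_{s}$ be generators. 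Each $h_{j}$ is a nonnegative \emph{rational} combination of the $g_{i}$, so there is a single integer $N\geq 1$ with $Nh_{j}\in Q$ for all $j$. Reducing the coefficients modulo $N$ in an expression $x=\sum_{j}c_{j}h_{j}$ (with $c_{j}\in\mathbb{Z}_{\geq 0}$) of an arbitrary $x\in P\cap L$ shows that $x\in Q+F$, where $F=\{\sum_{j}r_{j}h_{j}:0\leq r_{j}<N\}$ is a finite set. For each $f\in F$ we have $f\in P\cap L$, hence $-f\in L=Q-Q$ and so $f+a_{f}\in Q$ for some $a_{f}\in Q$; then $w:=\sum_{f\in F}a_{f}\in Q$ satisfies $w+F\subseteq Q$, and therefore $w+(P\cap L)\subseteq w+Q+F\subseteq Q\subseteq T$. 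This last paragraph — in essence the non-emptiness of the conductor of $Q$ inside its saturation $P\cap L$ — is the only step that is not purely formal, and is where I expect the real work to lie; everything else (Gordan's lemma, the remainder bookkeeping) is routine.

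To conclude, pick a rational point $v$ in the interior of $P$ together with $\delta>0$ and an integer $M_{0}\geq 1$ such that $\overline{B}(v,\delta)\subseteq P$ and $M_{0}v\in L$, and then enlarge $M_{0}$ to a multiple $M$ with $M\delta\geq\|w\|+\rho$. Set $\widetilde{\beta}:=w+Mv$. Since $Mv\in P$ and $\widetilde{\beta}\in L$, we get $\widetilde{\beta}\in(w+P)\cap L = w+(P\cap L)\subseteq T$, where the identity $(w+P)\cap L = w+(P\cap L)$ uses $w\in L$. For any integer $m\geq 1$, translating the ball $\overline{B}(m\widetilde{\beta},m\rho)$ by $-w$ and using $\overline{B}(mMv,mM\delta)=mM\cdot\overline{B}(v,\delta)\subseteq P$ together with the inequality $(m-1)\|w\|+m\rho\leq mM\delta$ gives $\overline{B}(m\widetilde{\beta},m\rho)\subseteq w+P$, and hence
\begin{equation*}
m(\widetilde{\beta}-S)\cap L \;\subseteq\; \overline{B}(m\widetilde{\beta},m\rho)\cap L \;\subseteq\; (w+P)\cap L \;=\; w+(P\cap L) \;\subseteq\; T,
\end{equation*}
which is the desired conclusion.
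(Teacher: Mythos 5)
Your proof is correct and takes essentially the same route as the paper: both arguments fix a full-dimensional, finitely generated rational cone spanned by elements of $T$ that generate $L$, produce a ``conductor'' element $w$ with $w+(P\cap L)\subseteq T$, and then translate far enough into the interior of that cone that every dilate $m(\widetilde{\beta}-S)$ stays inside $w+P$. The only real difference is that where the paper outsources the conductor step to Lemma 4.13 of [FL13], you prove it directly via Gordan's lemma and reduction of the coefficients modulo $N$, which makes the argument self-contained (modulo the harmless convention that $T$ is taken to contain $0$, as it does in the paper's application).
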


\begin{proof}
Let $C$ denote the closure of the cone in $M$ generated by elements of $T$.  Since $C$ is full-dimensional, it is clear that there is a class $\gamma \in T$ such that $\gamma - S \subset C^{\circ}$.  One can then choose a subcone $C' \subset C$ which is finitely generated by a subset of $T$ which still generates $L$ and such that $\gamma - S \subset C'$.  Note that $m(\gamma - S) \subset C'$ for any positive integer $m$.  One can then conclude by the argument of \cite[Lemma 4.13]{fl13} the existence of a $\beta \in T$ such that
\begin{equation*}
\beta + (m(\gamma - S) \cap L) \subset T
\end{equation*}
for any positive integer $m$.  Set $\widetilde{\beta} = \beta + \gamma$.
\end{proof}

\begin{proof}[Proof of Theorem \ref{birbeh}:]
Without loss of generality we may suppose $\alpha \in \Eff_{k}(X)_{\mathbb{Z}}$.   Define:
\begin{itemize}
\item $T_{Y} \subset N_{k}(Y)_{\mathbb{Z}}$ to be the subsemigroup consisting of effective classes $\xi$ such that $m\alpha - \phi_{*}\xi$ is an effective class for some $m>0$.
\item $L_{Y}$ for the sublattice of $N_{k}(Y)_{\mathbb{Z}}$ generated by $T_{Y}$.
\item $M_{Y}$ the subspace of $N_{k}(Y)$ spanned by $T_{Y}$. 
\end{itemize}
Let $\mathcal{S}$ be the compact set as constructed in Lemma \ref{compactnessofmov}.  Then $\mathcal{S} \cap M_{Y}$ is also compact.  By Lemma \ref{conelem}, there is some class $\widetilde{\beta} \in T_{Y}$ such that $m(\widetilde{\beta} - (\mathcal{S} \cap M_{Y})) \cap L_{Y} \subset T_{Y}$ for any positive integer $m$.

Fix a positive integer $m$ and a family of effective $k$-cycles $p: U \to W$ of class $m\alpha$ of maximal mobility count. 
Remove all non-dominant components of $U$ and consider the strict transform family $q$ on $Y$.  Note that $[q] \in m\mathcal{S} \cap T_{Y}$.  Thus $m \widetilde{\beta} - [q] \in m(\widetilde{\beta} - (\mathcal{S} \cap M_{Y})) \cap L_{Y}$ is an effective class.  Consequently
\begin{equation*}
\mc(m \widetilde{\beta}) \geq \mc(q) = \mc(p) = \mc(m \alpha).
\end{equation*}
Since $\widetilde{\beta} \in T_{Y}$, by definition there is some positive integer $c$ such that $c\alpha - \phi_{*}\widetilde{\beta}$ is an effective class $\nu$.  By \cite[Proposition 3.21]{fl14} there is an effective $\mathbb{Q}$-class $\mu$ such that $\phi_{*}\mu= \nu$; let $b$ be a positive integer such that $b\mu$ is an effective class.  Set $\beta := \frac{1}{c}(\mu + \widetilde{\beta})$.  Then $\mc(cbm\beta) \geq \mc(m\alpha)$ for all positive integers $m$, and we obtain $\kappa(\beta) \geq \kappa(\alpha)$ by the invariance of Iitaka dimensions under rescaling.  The reverse inequality follows from Theorem \ref{mobcountstricttransform}. 
\end{proof}

\subsection{Extremal rays}

There are some techniques which one can sometimes apply to give upper bounds on the Iitaka dimension of a class on an extremal ray.

\begin{prop}
Let $X$ be a smooth projective variety of dimension $n \geq 3$.  Suppose that $\alpha \in \Eff_{n-2}(X)_{\mathbb{Q}}$ spans an extremal ray and that there is an ample divisor $A$ such that $\alpha \not \in A \cdot N^{1}(X)$.  Then $\kappa(\alpha) \leq n-1$.
\end{prop}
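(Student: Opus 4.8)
The plan is to argue by contradiction: assuming $\kappa(\alpha) > n-1$, I will use Lemma \ref{irrfamconcentrates} together with the extremality of the ray to produce an irreducible family of cycles of class proportional to $\alpha$ with abnormally large mobility count, and then contradict this using the hypothesis $\alpha \notin A\cdot N^1(X)$. First the reductions. Since $\alpha$ spans an extremal ray of $\Eff_{n-2}(X)$ it lies on the boundary, hence is not big, so by the theorem of \cite{lehmann16} recalled in the examples $\kappa(\alpha) < n$; thus we may assume $n-1 < \kappa(\alpha) < n$. In particular $\kappa(\alpha) > n-1 \geq 2$, so, since $\alpha$ has codimension $2$, Lemma \ref{irrfamconcentrates} applies. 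Because the Iitaka dimension is invariant under rescaling (Lemma \ref{iitakadimhom}) we may take $\alpha$ integral, and because $A\cdot N^1(X) = (mA)\cdot N^1(X)$ for all $m>0$ we may take $A$ very ample. If $n = 3$ then $\alpha$ is a curve class and Example \ref{mobilityofcurves} already gives $\kappa(\alpha) \in \{-\infty, 0, n-1, n\}$, so we may assume $n \geq 4$.

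Fix a large constant $C$, an $\epsilon > 0$ with $\tfrac{\kappa(\alpha)}{2} - \epsilon > \tfrac{n-1}{2}$, and an ample divisor $H$. By Lemma \ref{irrfamconcentrates} there exist an integer $m$ and an irreducible family of $(n-2)$-cycles $p$ with
\begin{equation*}
\mc(p) > C\,(H^{n-2}\cdot p)^{\frac{\kappa(\alpha)}{2} - \epsilon}
\end{equation*}
and with $m\alpha - [p]$ the class of an effective $\mathbb{Z}$-cycle. Since $\mc(p) > 0$, the class $[p]$ is nonzero and the members of $p$ cover $X$. The identity $m\alpha = [p] + (m\alpha - [p])$ writes $m\alpha$ as a sum of two pseudo-effective classes, so extremality of the ray $\mathbb{R}_{\geq 0}\alpha$ forces $[p] = t\alpha$ for some $t \in \mathbb{Q}_{>0}$. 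Substituting $H^{n-2}\cdot p = t\,(H^{n-2}\cdot\alpha)$ turns the inequality into $\mc(p) > C'\,t^{\frac{\kappa(\alpha)}{2} - \epsilon}$, where $C'$ depends only on $X$, $H$, $\alpha$, and where the exponent exceeds $(n-1)/2$.

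The remaining step is to contradict this using $\alpha \notin A\cdot N^1(X)$, and this is where I expect the real work to lie. The heuristic is that the failure of $\alpha$ to be divisible by $A$ costs ``one hyperplane's worth of freedom'' for such families, forcing the exponent in the mobility-count-versus-degree inequality down from the generic value $n/2$ to at most $(n-1)/2$. One candidate route is to restrict the family $p$ to a general member $D \in |A|$ (so $\dim D = n-1 \geq 3$) and exploit Lefschetz-type comparisons between the numerical cycle theory of $X$ and of $D$ — in particular the Grothendieck--Lefschetz identification $N^1(X) \cong N^1(D)$ together with the projection formula $i_* i^* = A\cdot(-)$, whose image is exactly $A\cdot N^1(X)$ — to show that mobility as large as forced above would compel $t\alpha$ to lie in $A\cdot N^1(X)$, a contradiction. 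An alternative route is to dualize: the condition $\alpha \notin A\cdot N^1(X)$ produces a numerical class $\gamma$ with $A\cdot\gamma = 0$ and $\gamma\cdot\alpha \neq 0$, which one then uses as a ``boundary of the nef cone'' obstruction, in the manner of Lemma \ref{familymcbound}, to bound $\mc(p)$ by a power of $H^{n-2}\cdot p$ with exponent at most $(n-1)/2$. Either way, once the quantitative estimate is established — that the Lefschetz-type constraint reduces the exponent from $n/2$ to $(n-1)/2$ — comparison with $\mc(p) > C'\,t^{\frac{\kappa(\alpha)}{2} - \epsilon}$ is immediate and yields $\kappa(\alpha) \leq n-1$.
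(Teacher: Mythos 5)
Your reductions are fine (homogeneity, the $n=3$ case via Example \ref{mobilityofcurves} and non-bigness, the application of Lemma \ref{irrfamconcentrates}, and the deduction $[p]=t\alpha$ from extremality), but the proof stops exactly where the content begins: the step that converts $\alpha \notin A\cdot N^{1}(X)$ into the bound $\mc(p) \leq C(H^{n-2}\cdot p)^{(n-1)/2}$ is left as a heuristic with two unexecuted ``candidate routes.'' Neither route as described would go through. The Lefschetz mechanism is not used to show that large mobility ``compels $t\alpha$ to lie in $A\cdot N^{1}(X)$''; and the dual class $\gamma$ with $A\cdot\gamma=0$, $\gamma\cdot\alpha\neq 0$ does not plug into Lemma \ref{familymcbound}, which requires a family of subschemes with a flat map to $X$, not a numerical obstruction.

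The missing idea is the following concrete geometric statement, which is where extremality and the hypothesis actually do their work. Take $q$ with $qA$ very ample and let $H$ be a \emph{very general} member of $|qA|$. By the Lefschetz-type theorem, any effective divisor $Z\subset H$ has class (pushed forward to $X$) lying in $A\cdot N^{1}(X)$. If such a $Z$ were a component of an effective cycle of class $m\alpha$, then $m\alpha-[Z]$ would be pseudo-effective, so extremality would force $[Z]$ to be a positive multiple of $\alpha$, contradicting $\alpha\notin A\cdot N^{1}(X)$. Hence \emph{no component of any member of a family representing $m\alpha$ is contained in $H$}, so one may intersect the entire family with $H$ to obtain a family $p_{H}$ of $(n-3)$-cycles on the $(n-1)$-dimensional variety $H$ whose class $m\alpha\cdot H$ grows linearly in $m$. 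One then has $\mc_{X}(p)\leq\mc_{H}(p_{H})$ (by the argument of \cite[Theorem 5.12]{lehmann16}), and the general mobility bound on the $(n-1)$-fold $H$ gives $\mc_{H}(p_{H})\leq Cm^{(n-1)/2}$, whence $\kappa(\alpha)\leq n-1$. Note that extremality is applied to components contained in $H$, not (as in your write-up) to the decomposition $m\alpha=[p]+(m\alpha-[p])$; your reduction to irreducible families is harmless but is not needed and does not substitute for this step.
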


\begin{proof}
Fix a positive integer $q$ so that $qA$ is very ample, and let $H$ be a very general member of $|qA|$.  By the Lefschetz hyperplane theorem we see that if $Z \subset H$ is an effective divisor then its class in $X$ lies in $A \cdot N^{1}(X)$.  In particular, since $\alpha$ spans an extremal ray, the class $m\alpha - [Z]$ is not pseudo-effective for any positive integer $m$ and for any such choice of $Z$.

Let $p: U \to W$ be a family of effective cycles representing $m\alpha$ of maximal Iitaka dimension.  By the above argument, there is no component of any member of the family which is contained in $H$.  Thus, by taking the intersection of these cycles with the divisor $H$ we obtain a family $p_{H}$ of $(n-3)$-cycles on $H$ of class $m \alpha \cdot H$.  By arguing as in \cite[Theorem 5.12]{lehmann16}, we see that $\mc_{X}(p) \leq \mc_{H}(p_{H})$.  As $H$ has dimension $n-1$ and the class of $p_{H}$ grows linearly in $m$, the mobility count of $p_{H}$ is bounded above by $Cm^{n-1/n-k}$ for some constant $C$.
\end{proof}

The argument above clearly extends to other codimensions when $X$ satisfies a suitable Lefschetz theorem.

\section{Iitaka dimension of divisors} \label{divisorsec}

We next show that for divisors the Iitaka dimension is an integer.  This is a familiar fact for the classical Iitaka dimension defined by sections; we verify that the numerical version has similar behavior.  To differentiate the two, we let $\kapclas(D)$ denote the classical Iitaka dimension of a Cartier divisor $D$.

To study the mobility count of divisors, it is often useful to reformulate the definition as follows.  Suppose that $X$ is smooth and that $p: U \to W$ is a family of effective divisors on $X$ with $W$ normal.  We obtain an induced map $ch: W \to \Chow(X)$ and the mobility count of $p$ coincides with the dimension of $\overline{ch(W)}$.  We will frequently use this interpretation in this section.  

\begin{lem} \label{cartiersectionestimate}
Let $X$ be a smooth projective variety of dimension $n$ and let $D$ be a Cartier divisor on $X$ with $\kapclas(D) = r$.  Let $A$ be the pullback of a very ample divisor under a birational map and let $s$ be a positive integer such that $D \cdot A^{n-1} < sA^{n}$.  Then
\begin{equation*}
h^{0}(X,D) < s^{r}A^{n} + 1.
\end{equation*}
\end{lem}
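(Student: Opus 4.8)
The plan is to reduce the statement to the classical fact that, for a divisor of Iitaka dimension $r$, the dimension of the image of $X$ under the Iitaka fibration associated to $|mD|$ stabilizes at $r$, and then to bound $h^0$ by estimating the degree of this image. More precisely, recall (Iitaka's theorem) that there is an integer $m_0$ and a variety $Y$ of dimension $r$ such that for all sufficiently divisible $m$, the rational map $\phi_m: X \dashrightarrow \mathbb{P}(H^0(X,mD)^\vee)$ defined by the complete linear series factors through a map to $Y$, and $h^0(X, mD)$ grows like $m^r$ up to constants. Since we want a bound on $h^0(X,D)$ itself (not an asymptotic one), I would instead argue directly: I may replace $X$ by a smooth birational model so that $A$ becomes genuinely very ample (this does not change $h^0(X,D)$ since $D$ is a Cartier divisor pulled back appropriately, and the Iitaka dimension is a birational invariant; one must be a little careful here that $D$ is pulled back too, but this is exactly the hypothesis "$A$ is the pullback of a very ample divisor under a birational map").

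The core estimate is the following. Consider the sub-linear-series of $|D|$ given by all of $H^0(X,D)$; after resolving base locus I obtain a morphism $g: X' \to \mathbb{P}^N$ with $N = h^0(X,D)-1$, and the image $Y = g(X')$ has $\dim Y \leq \kapclas(D) = r$ (this is essentially the definition of classical Iitaka dimension, since $\dim Y$ is nondecreasing in $m$ and bounded by $r$; for $m=1$ it is at most $r$). The image $Y \subset \mathbb{P}^N$ is nondegenerate, so its degree in $\mathbb{P}^N$ is at least $N - \dim Y + 1 \geq N - r + 1$. On the other hand I can bound $\deg Y$ from above: $\deg Y \cdot (\text{mult of } g) = $ the top self-intersection of $g^*\mathcal{O}(1) = D$ restricted to a general linear section, which is at most $D^r \cdot (\text{ample})^{n-r}$-type quantity, and using the hypothesis $D\cdot A^{n-1} < s A^n$ together with the fact that intersecting against the very ample $A$ lets one compare $D$-degrees to $s$ times $A$-degrees, one gets $\deg Y \leq s^r A^n$. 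The key inequality $D \cdot A^{n-1} < sA^n$ should be leveraged: intersecting $r$ copies of $g^*\mathcal O(1)$ against $A^{n-r}$ and using that a general linear section of $Y$ has dimension $0$ and degree $\deg Y$, one bounds $\deg Y \le D^r\cdot A^{n-r} \le s^r A^n$ by applying the degree hypothesis repeatedly (replacing one $D$ at a time, using that $A$ is nef so the inequalities are preserved). Combining, $h^0(X,D) - 1 = N \leq \deg Y + r - 1 \leq s^r A^n + r - 1$; a slightly more careful bookkeeping (using that $Y$ is actually cut out so that $N \le \deg Y + \dim Y - 1$) yields $h^0(X,D) < s^r A^n + 1$ as stated.

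I expect the main obstacle to be the degree comparison $D^r \cdot A^{n-r} \leq s^r A^n$: one has to pass from the single-divisor hypothesis $D \cdot A^{n-1} < s A^n$ to the mixed-intersection bound, which requires knowing that $sA - D$ intersects nonnegatively against effective cycles of the relevant dimension — true because $A$ is ample (hence $sA$ dominates $D$ numerically on the relevant subvarieties, using that a general complete intersection $A^{n-r}$ is an effective cycle on which one can induct). A secondary subtlety is justifying $\dim Y \le r$ for the incomplete series at level $m=1$: this is immediate from the definition of $\kapclas$ since the map at level $1$ factors the map at any level $m$ with $1 \mid m$, but one should phrase it so as not to accidentally need $D$ effective with nonempty sections — if $h^0(X,D) \le 1$ the statement is trivial, so we may assume $h^0(X,D) \ge 2$ and the map $g$ is defined. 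Finally, one should double check the off-by-one constants in "nondegenerate subvariety of degree $d$ in $\mathbb{P}^N$ spans at most a $\mathbb{P}^{d + \dim - 1}$", which is the classical bound $\deg Y \ge \operatorname{codim} Y + 1$, to land exactly on the strict inequality $< s^r A^n + 1$.
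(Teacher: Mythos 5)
Your strategy---map $X$ to $\mathbb{P}^{N}$ by the complete linear series $|D|$, bound the dimension of the image $Y$ by $r$, and play the classical lower bound $\deg Y \geq \codim Y + 1$ for irreducible nondegenerate varieties against an upper bound on $\deg Y$---is genuinely different from the paper's proof, which instead cuts $X'$ by $n-r$ general members of $|A|$ (losing no sections because $M|_{W_i}$ is not big for $i<n-r$) and then peels off hyperplanes $s$ at a time in the remaining $r$ directions. However, your argument has a real gap at precisely the step you flag as the main obstacle: the passage from $D \cdot A^{n-1} < sA^{n}$ to $\deg Y \leq s^{r}A^{n}$. The justification you offer---that $sA - D$ ``intersects nonnegatively against effective cycles of the relevant dimension'' because $sA$ ``dominates $D$ numerically on the relevant subvarieties''---does not follow from the hypothesis: $D \cdot A^{n-1}$ is a single intersection number and places no constraint on $D\cdot C$ for an individual curve $C$, nor on the mixed numbers $D^{j}\cdot A^{n-j}$ for $j \geq 2$, which for a non-nef $D$ can be negative or arbitrarily large. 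Relatedly, $\deg Y$ is computed by $g^{*}\mathcal{O}(1) = M$, the base-point-free part of $\phi^{*}|D|$, not by $\phi^{*}D$; your proposed ``replace one $D$ at a time'' manipulation requires nonnegativity of products of the fixed part $E = \phi^{*}D - M$ against powers of $D$, which again fails when $D$ is not nef. The correct way to salvage your route is to work throughout with the nef class $M$ (noting $M\cdot\phi^{*}A^{n-1} \leq D\cdot A^{n-1}$) and invoke the Khovanskii--Teissier log-concavity of $i \mapsto M^{i}\cdot\phi^{*}A^{n-i}$, which yields $M^{t}\cdot\phi^{*}A^{n-t} \leq (M\cdot\phi^{*}A^{n-1})^{t}/(A^{n})^{t-1} < s^{t}A^{n}$ for $t = \dim Y$. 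That inequality is a genuine missing ingredient, not a routine verification.

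A secondary problem is the final bookkeeping: nondegeneracy gives only $h^{0}(X,D) = N+1 \leq \deg Y + \dim Y$, hence at best $h^{0}(X,D) \leq s^{r}A^{n} + r - 1$, which is strictly weaker than the claimed $h^{0}(X,D) \leq s^{r}A^{n}$ once $r \geq 2$. Varieties of minimal degree show that $\deg Y \geq \codim Y + 1$ is sharp, so this cannot be absorbed by ``more careful bookkeeping.'' The discrepancy is harmless for the application in the paper, which only needs a bound of order $s^{r}A^{n}$, but it means the lemma as literally stated is not established by your argument; the paper's hyperplane-cutting induction is what produces the exact constant.
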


\begin{proof}
Let $\phi: X' \to X$ be a birational map resolving the linear series $|D|$.  Let $M$ denote a divisor in the basepoint free part of $\phi^{*}|D|$ and let $\pi: X' \to Z$ be the morphism induced by $|M|$.  Note that $\dim(Z) \leq r$; we may also assume that $\dim(Z) \geq 1$ since otherwise the desired inequality is clear.  Since
\begin{equation*}
M \cdot \phi^{*}A^{n-1} \leq D \cdot A^{n-1}
\end{equation*}
it suffices to prove the statement for $M$.  Fix $n-r$ general elements of the linear series $A_{1},\ldots,A_{n-r} \in |A|$ and let $W_{i}$ denote the scheme-theoretic intersection of the first $i$ of these.  Note that $M|_{W_{i}}$ is not big for $i<n-r$; using the LES for restriction of sections inductively one sees that $h^{0}(X',M) \leq h^{0}(W_{n-r},M)$.  But by another easy inductive argument using a LES of sections and cutting down by hyperplanes the latter is bounded above by $s^{r}A^{n}+1$.
\end{proof}

\begin{thrm}
Let $X$ be a smooth projective variety of dimension $n$ and let $\alpha \in N_{n-1}(X)_{\mathbb{Q}}$.  Then
\begin{equation*}
\kappa(\alpha) = \sup_{L \in \mathrm{Div}(X) \otimes \mathbb{Q}, [L] = \alpha} \kapclas(L).
\end{equation*}
In particular, $\kappa(\alpha) \in \{-\infty \} \cup \mathbb{Z}_{\geq 0}$.
\end{thrm}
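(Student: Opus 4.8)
The statement to prove is that for a smooth projective $X$ of dimension $n$ and $\alpha \in N_{n-1}(X)_{\mathbb{Q}}$, one has $\kappa(\alpha) = \sup_{[L] = \alpha} \kapclas(L)$, the sup taken over $\mathbb{Q}$-Cartier (equivalently $\mathbb{Q}$-Weil, since $X$ is smooth) divisor classes $L$ numerically equivalent to $\alpha$. The proof should be two inequalities. For $\kappa(\alpha) \geq \kapclas(L)$ for every such $L$: an effective divisor $D$ with $[D] = mL$ (for $m$ clearing denominators) moves in a linear system $|D|$ whose general members sweep out a family of effective cycles with class $m\alpha$, and the mobility count of this family is $h^0(X,D) - 1$; so $\mc(m\alpha) \geq h^0(X,mL) - 1$, and letting $m \to \infty$ through multiples gives $\kappa(\alpha) \geq \kapclas(L)$ since the classical Iitaka dimension is exactly the growth rate of $h^0$ along multiples. (One also needs to observe $\kappa(\alpha) = -\infty$ exactly when no $L$ with $[L]=\alpha$ is $\mathbb{Q}$-effective, which handles the edge case.)

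The reverse inequality $\kappa(\alpha) \leq \sup_L \kapclas(L)$ is the substantive direction. Let $r = \kappa(\alpha)$; assume $r \geq 0$. The idea is: given a family $p : U \to W$ of effective divisors of class $m\alpha$ with large mobility count, I want to extract a single linear system, i.e. a single divisor class $L$ with $[L] = \alpha$, whose sections account for that mobility count up to bounded error. Using the reformulation noted just before Lemma~\ref{cartiersectionestimate}, for a family of effective divisors with normal base the mobility count equals $\dim \overline{ch(W)} \subset \Chow(X)$, and the locus of effective divisors of a fixed numerical class $\alpha$ (times $m$) that appear this way is parametrized — after passing to the subscheme of $\Chow$ of the right numerical class — by a union of linear systems $|D_j|$ attached to the finitely many integral divisor classes $D_j$ in the Néron–Severi coset of $m\alpha$. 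Since $\mc(p) \leq \max_j \dim |D_j| = \max_j (h^0(X,D_j) - 1)$, and there are only finitely many $j$ once one fixes the numerical class and the degree against an ample divisor, the largest of these dominates. Passing to the $\mathbb{Q}$-class $L_j = \frac{1}{m}D_j$ and using Lemma~\ref{cartiersectionestimate} to control $h^0(X, D_j)$ polynomially in $m$ with exponent $\kapclas(D_j) = \kapclas(L_j)$, I get $\mc(m\alpha) \leq \mathrm{const} \cdot m^{\max_j \kapclas(L_j)}$ for infinitely many $m$ — but to run the $\limsup$ argument cleanly I need this for all large $m$ in a fixed progression, so I should fix the denominator $m_0$ of $\alpha$ once and for all and work with multiples of $m_0$, where the relevant finite set of integral divisor classes (in each bounded range) stabilizes.

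The main obstacle is exactly this finiteness/uniformity step: ensuring that as $m$ grows, the integral divisor classes $D$ with $[D] = m\alpha$ that can occur as classes of effective cycles in a maximal-mobility family stay under control, so that one fixed $L$ (or a finite set, hence a max) with $[L] = \alpha$ realizes the sup. The subtlety is that different $m$ could a priori use genuinely different integral refinements of $m\alpha$ — i.e. the "best" torsion-twist could wander. The fix is to note that $N_{n-1}(X)_{\mathbb{Z}} / (\text{numerically trivial}) = N_{n-1}(X)_{\mathbb{Z}}$ is a finitely generated free group by definition, so "the" integral class $m\alpha$ is well-defined once $m$ clears denominators; there is no torsion ambiguity in the numerical group, so a family of effective divisors of class $m\alpha$ lives in a single linear system $|D|$ for the unique integral divisor $D$ in that class up to linear equivalence — wait, numerical vs. linear equivalence can still differ, so the correct statement is that it lies in a union of finitely many linear systems $|D + N|$ over numerically trivial $N$, and one must either invoke boundedness of such $N$ with a fixed bound on $h^0$ or argue that $\kapclas(D+N) = \kapclas(D)$ when $N$ is numerically trivial — the latter is false in general but the bound of Lemma~\ref{cartiersectionestimate} depends only on the numerical class of $D$, which is all that is needed. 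So the clean route is: apply Lemma~\ref{cartiersectionestimate} with $A$ ample and $s \approx m \cdot (\alpha \cdot A^{n-1})/A^n + 1$ to get $h^0(X, D) < s^{\kapclas(D)} A^n + 1$; then bound $\kapclas(D) \leq \sup_{[L]=\alpha}\kapclas(L)$ by definition (each such $D$ gives $\frac1m D$ with class $\alpha$); conclude $\mc(m\alpha) \leq C m^{\sup_L \kapclas(L)}$, hence $\kappa(\alpha) \leq \sup_L \kapclas(L)$. The integrality of $\kappa$ then follows since $\kapclas$ is integer-valued by Iitaka's theorem and the sup of a bounded set of integers is attained.
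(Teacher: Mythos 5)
Your overall strategy is the same as the paper's: the inequality $\kappa(\alpha)\geq\sup_{L}\kapclas(L)$ is the easy direction via linear systems, and the reverse direction reduces, through the identification of $\mc(p)$ with the dimension of the image of $W$ in $\Chow(X)$, to the section bound of Lemma \ref{cartiersectionestimate}, whose dependence only on the numerical class and on $\kapclas$ is exactly what closes the argument. However, there is a genuine gap in your key finiteness claim. You assert that the effective divisors of class $m\alpha$ are parametrized by a \emph{finite} union of linear systems $|D_j|$ and hence that $\mc(p)\leq\max_j\dim|D_j|$. This is false whenever $X$ has positive irregularity $q=h^1(X,\mathcal{O}_X)$: the line bundles with numerical class $m\alpha$ form a torsor under $\Pic^{\tau}(X)$, which is a finite union of translates of the $q$-dimensional abelian variety $\Pic^0(X)$, so the relevant linear systems vary in a positive-dimensional continuous family. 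For a concrete failure, take $X$ an abelian surface and $\alpha$ the class of a principal polarization $\Theta$: each $|\Theta+N|$ with $N\in\Pic^0(X)$ is a single point, yet the translates of $\Theta$ sweep out a $2$-dimensional locus in $\Chow(X)$ and $\mc(\alpha)=2>0=\max_j\dim|D_j|$. You spend your effort worrying about the torsion ambiguity (which is indeed finite and harmless), but the continuous $\Pic^0$ direction is the part that actually requires an argument and is never addressed.

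The repair is exactly what the paper does: let $P(X)$ be the dual of the Albanese (i.e.\ $\Pic^0(X)$), observe that $p$ induces a rational map $W\dashrightarrow P(X)$ whose fibers consist of families of linearly equivalent divisors, and bound
\begin{equation*}
\mc(p)\;\leq\;\dim P(X)\;+\;\max\nolimits_{F'}\dim|D_{F'}|,
\end{equation*}
where the maximum runs over components $F'$ of a general fiber. Since $\dim P(X)$ is an additive constant independent of $m$, Lemma \ref{cartiersectionestimate} then gives $\mc(m\alpha)\leq\dim P(X)+m^{r}s^{r}A^{n}+1$ and the asymptotic conclusion is unaffected. So your final answer is correct and the missing term would not change $\kappa(\alpha)$, but as written the inequality you rely on is false and the idea needed to fix it (fibering the Chow image over the Picard variety) is absent from your argument.
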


\begin{proof}
By homogeneity it suffices to consider the case when $\alpha \in N_{n-1}(X)_{\mathbb{Z}}$.  Set $r$ to be the maximum over all Iitaka dimensions as in the right hand side of the statement above.  The inequality $\kappa(\alpha) \geq r$ is clear. 

Conversely, let $P(X)$ denote the dual of the Albanese variety of $X$.  Fix a very ample divisor $A$ on $X$ and choose a positive integer $s$ such that
\begin{equation*}
\alpha \cdot A^{n-1} \leq s A^{n}.
\end{equation*}
Let $p: U \to W$ denote a family of effective divisors on $X$ representing $m\alpha$.  Then $p$ induces a rational map $W \dashrightarrow P(X)$ defined on the normal locus $W^{\circ} \subset W$.  We see that $\dim(\overline{ch_{p}(W^{\circ})}) \leq \dim(P(X)) + \chdim(p|_{F})$ where $F$ is a general fiber of the map from $W$ to $P(X)$.  In particular for any component $F'$ of $F$ we have that $p|_{F'}$ is a family of rationally equivalent effective divisors.  Thus by Lemma \ref{cartiersectionestimate} we have
\begin{equation*}
\mc(p) \leq \dim(P(X)) + m^{r}s^{r}A^{n} + 1
\end{equation*}
and the reverse inequality follows.
\end{proof}

By Theorem \ref{birbeh} we deduce:

\begin{thrm}
Let $X$ be a projective variety of dimension $n$ and let $\alpha \in N_{n-1}(X)_{\mathbb{Q}}$.  Let $\phi: X' \to X$ be a smooth birational model.  Then
\begin{equation*}
\kappa(\alpha) = \sup_{L \in \mathrm{Div}(X') \otimes \mathbb{Q}, \phi_{*}[L] = \alpha} \kapclas(L).
\end{equation*}
In particular, $\kappa(\alpha) \in \{-\infty \} \cup \mathbb{Z}_{\geq 0}$.
\end{thrm}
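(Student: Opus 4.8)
The plan is to read this off from Theorem \ref{birbeh} combined with the smooth case established immediately above. Write $r(\alpha)$ for the supremum on the right-hand side of the asserted identity. For the inequality $r(\alpha) \le \kappa(\alpha)$, I would take any $\mathbb{Q}$-divisor $L$ on $X'$ with $\phi_{*}[L] = \alpha$ and observe, exactly as in the first line of the proof of the smooth statement applied to the class $[L]$ on $X'$, that $\kapclas(L) \le \kappa([L])$ (a family realizing $|mL|$ has mobility count $h^{0}(X',mL)-1$, whose growth rate is governed by $\kapclas(L)$). Since $\phi$ is birational, Theorem \ref{mobcountstricttransform} applied to the induced operation on families of effective divisors gives $\kappa([L]) = \kappa(\phi_{*}[L]) = \kappa(\alpha)$. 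Taking the supremum over all such $L$ yields $r(\alpha) \le \kappa(\alpha)$.

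For the reverse inequality I would apply Theorem \ref{birbeh} with $Y = X'$ and $k = n-1$ to produce a class $\beta \in N_{n-1}(X')_{\mathbb{Q}}$ with $\phi_{*}\beta = \alpha$ and $\kappa(\beta) = \kappa(\alpha)$. Since $X'$ is smooth of dimension $n$ and $\beta$ has codimension one, the smooth case gives $\kappa(\beta) = \sup_{[L] = \beta} \kapclas(L)$. The classical Iitaka dimension takes values only in $\{-\infty, 0, 1, \ldots, n\}$, so this supremum is actually attained: there is a $\mathbb{Q}$-divisor $L_{0}$ on $X'$ with $[L_{0}] = \beta$ and $\kapclas(L_{0}) = \kappa(\beta) = \kappa(\alpha)$. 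As $\phi_{*}[L_{0}] = \phi_{*}\beta = \alpha$, the divisor $L_{0}$ occurs among those indexing $r(\alpha)$, so $r(\alpha) \ge \kappa(\alpha)$. Combining the two inequalities gives $\kappa(\alpha) = r(\alpha)$, and the final assertion follows since $r(\alpha)$ is a supremum of elements of $\{-\infty\} \cup \mathbb{Z}_{\geq 0}$ (with the convention that the supremum of the empty set is $-\infty$).

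No step here is genuinely difficult: all of the real content is already contained in Theorem \ref{birbeh} and in the smooth version of the statement, together with the fact that the classical and numerical Iitaka dimensions of a divisor class agree. The only point that calls for a word of care is the passage from the supremum in the smooth statement to an actual representing divisor $L_{0}$ realizing the Iitaka dimension of $\beta$; this is immediate from the fact that $\kapclas$ takes only finitely many values, so that the supremum is a maximum.
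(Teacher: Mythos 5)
Your proposal is correct and is essentially the paper's argument: the paper derives this theorem from Theorem \ref{birbeh} together with the smooth case in exactly the way you describe. The only nitpick is that Theorem \ref{mobcountstricttransform} gives $\kappa([L]) \leq \kappa(\phi_{*}[L])$ rather than the equality you assert (a poorly chosen lift $L$ of $\alpha$ may have strictly smaller Iitaka dimension), but since you only use that inequality the argument is unaffected.
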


\begin{rmk}
When $X$ is normal, one can define the Iitaka dimension of any Weil divisor $D$ capturing the asymptotic growth rate of sections of the rank $1$ reflexive sheaves $\mathcal{O}_{X}(mD)$ analogously with the Cartier divisor case.  There is a Cartier divisor $D'$ on a birational model of $X$ with exactly the same behavior of sections (see for example \cite[Lemma 3.3]{fkl15}), so that the Iitaka dimension is still integer-valued.  For normal varieties the Iitaka dimension of a Weil divisor numerical class coincides with the maximal Iitaka dimension of any Weil divisor representing the class by essentially the same argument.
\end{rmk}

\section{Contracted classes}

Suppose that $\pi: X \to Z$ is a morphism and $\alpha \in \Eff_{k}(X)$ satisfies $\pi_{*}\alpha = 0$.  The goal of this section is to bound the Iitaka dimension of $\alpha$ in terms of the geometry of the map $\pi$.

\begin{defn}
Let $\pi: X \to Z$ be a surjective morphism of projective varieties and let $\alpha \in \Eff_{k}(X)$.  Fix an ample divisor $A$ on $Z$.  The $\pi$-contractibility index of $\alpha$ is defined to be the largest non-negative integer $c \leq k+1$ such that $\alpha \cdot \pi^{*}A^{k-c+1}=0$.  This definition is independent of the choice of $A$.
\end{defn}

The basic properties of the contractibility index are described by \cite[Section 4.2]{fl15}:
\begin{itemize}
\item The contractibility index of $\alpha$ is positive precisely when $\pi_{*}\alpha=0$.
\item The contractibility index is at most $k+1$, with equality only for the $0$ class.
\item The contractibility index is at least $k-\dim Z$.  If the contractibility index is larger than this minimum value, then no effective cycle of class $\alpha$ surjects onto $Z$.
\end{itemize}

\begin{exmple}
If $V$ is an irreducible subvariety of $X$, then the contractibility index of $[V]$ is the same as $\mathrm{reldim}(\pi|_{V})$.
\end{exmple}

The Iitaka dimension of $\alpha$ exhibits two different behaviors, based on whether the contractibility index $c$ is smaller or larger than the relative dimension of $\pi$.  First suppose that $c > \mathrm{reldim}(\pi)$.  \cite{fl15} calls such classes ``$\pi$-exceptional'' and shows that they are rigid in a strong sense.  In particular:

\begin{lem}
Let $\pi: X \to Z$ be a surjective morphism of projective varieties of relative dimension $e$.  Suppose that $\alpha \in \Eff_{k}(X)_{\mathbb{Z}}$ has contractibility index $c$ and that $c > e$.  Then $\kappa(\alpha) \leq 0$.
\end{lem}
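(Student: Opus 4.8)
The plan is to show that $\mc(m\alpha) = 0$ for every $m > 0$; the bound $\kappa(\alpha) \le 0$ then follows at once from the definition of the Iitaka dimension, since either no multiple of $\alpha$ is effective (and $\kappa(\alpha) = -\infty$) or every such multiple has vanishing mobility count (and $\kappa(\alpha) = 0$). Since the $\pi$-contractibility index is unchanged under rescaling, each $m\alpha$ is again $\pi$-exceptional, i.e.\ has contractibility index $c > e$, so it suffices to prove that no effective cycle whose class has contractibility index $c > e$ can contain a general point of $X$. I would get this from the stronger assertion that the support of any such effective cycle lies in a single proper closed subset $E \subsetneq X$ depending only on $\pi$: namely the locus $E = \{ x \in X : \dim_{x} \pi^{-1}(\pi(x)) \ge e+1 \}$ where the fibers of $\pi$ fail to be equidimensional. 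This set is closed by upper semicontinuity of fiber dimension, and it is proper because the fiber over a general point of $Z$ has dimension exactly $e$ (that is the meaning of relative dimension $e$), so a general point of $X$ avoids $E$.

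The core step is the containment $\Supp(V) \subseteq E$ for an effective cycle $V = \sum a_{i} V_{i}$ (with $a_{i} \ge 1$, each $V_{i}$ irreducible of dimension $k$) whose class is $m\alpha$. Fix an ample divisor $A$ on $Z$. Since the contractibility index of $\alpha$ is $c$, we have $\alpha \cdot \pi^{*}A^{k-c+1} = 0$, and because $c > e$ the exponent $k-c+1 \le k-e$; intersecting further with $\pi^{*}A$ gives $m\alpha \cdot \pi^{*}A^{k-e} = 0$. Now $\sum a_{i} [V_{i}] \cdot \pi^{*}A^{k-e} = 0$, and each summand $[V_{i}] \cdot \pi^{*}A^{k-e}$ lies in $\Eff_{e}(X)$ (intersection of an effective class with a nef class, using Lemma \ref{intprop} and a limit argument). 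As $\Eff_{e}(X)$ is salient, each $[V_{i}] \cdot \pi^{*}A^{k-e} = 0$. A dimension count then forces $\dim \pi(V_{i}) \le k-e-1$: were $\dim \pi(V_{i}) \ge k-e$, the class $[V_{i}] \cdot \pi^{*}A^{k-e}$ would be represented by cutting $V_{i}$ with $k-e$ general members of $\pi^{*}|NA|$, a nonzero effective $e$-cycle. Consequently the general fiber of the dominant map $V_{i} \to \pi(V_{i})$ is pure of dimension $k - \dim \pi(V_{i}) \ge e+1$, so a general point $x \in V_{i}$ satisfies $\dim_{x} \pi^{-1}(\pi(x)) \ge e+1$; since $E$ is closed and $V_{i}$ is irreducible, $V_{i} \subseteq E$.

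Combining these, for every $m > 0$ each effective cycle of class $m\alpha$ is supported in the fixed proper closed subset $E$, so the total space of any family of effective cycles representing $m\alpha$ maps into $E$ under the evaluation morphism to $X$, which is therefore not dominant; hence $\mc(m\alpha) = 0$ and $\kappa(\alpha) \le 0$. The one subtlety to watch is the final geometric step: one should not try to bound $\pi^{-1}(\pi(V_{i}))$, which can be all of $X$, but instead observe that $V_{i}$ itself is trapped inside the intrinsic non-equidimensionality locus $E$, and that $E$ does not depend on $m$ or on the chosen effective representative. Everything else is routine. Alternatively, one may simply invoke the rigidity of $\pi$-exceptional classes established in \cite{fl15}, which already yields that such classes cannot meet a general point of $X$.
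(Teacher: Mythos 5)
Your proof is correct and follows the same route as the paper: the paper's entire argument is to cite \cite{fl15} (Lemma 4.11) for the existence of a proper closed subset $E \subsetneq X$ containing the support of every effective cycle whose class is a multiple of $\alpha$, whence $\mc(m\alpha)=0$ for all $m$. You have simply supplied the details of that cited argument (identifying $E$ with the non-equidimensionality locus of $\pi$ and using salience of $\Eff_{e}(X)$ to force $\dim \pi(V_{i}) \le k-e-1$), so the two proofs agree in substance.
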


\begin{proof}
Arguing as in \cite[Lemma 4.11]{fl15}, there is a proper closed subset $E$ in $X$ such that any effective cycle represented by a multiple of $\alpha$ is contained in $E$.  Thus $\mc(m\alpha) = 0$ for all positive integers $m$.
\end{proof}

The case when $c = \mathrm{reldim}(\pi)$ is handled by the following naive bound.

\begin{lem} \label{naivebound}
Let $\pi: X \to Z$ be a surjective morphism from a projective variety of dimension $n$ to a projective variety of dimension $d$.  Suppose that $\alpha \in \Eff_{k}(X)$ has contractibility index $c$ and that $k - \dim(Z) < c \leq n-d$.  Then $\kappa(\alpha) \leq (n-k) \cdot \frac{d}{d-k+c}$.
\end{lem}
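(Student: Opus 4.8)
The plan is to bound the mobility count of any family representing $m\alpha$ by intersecting members of the family with a general complete intersection pulled back from the base $Z$, and then applying the known bound on mobility counts of cycles in projective space via Lemma \ref{familymcbound}. Concretely, fix an ample divisor $A$ on $Z$ and set $H = \pi^{*}A$. Because $\alpha$ has contractibility index $c$, we have $\alpha \cdot H^{k-c+1} = 0$, so that the dimension of the general fiber of the restriction of $\pi$ to an effective cycle $W$ representing $m\alpha$ is at most $c$ — no component of $W$ can surject onto a subvariety of $Z$ of dimension larger than $k-c$. This gives a way to trade intersections with pullback hyperplanes for a drop in the ``base dimension'' seen by the cycle.

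First I would take a family $p: U \to W$ of effective $k$-cycles representing $m\alpha$ of maximal mobility count; by the discussion in Section \ref{familysec} we may assume the base is normal projective and that the components of $U$ have generically irreducible fibers. Because the contractibility index is $c$ and $k - \dim Z < c$, no component of a general member surjects onto $Z$; in fact a general member maps into a proper subvariety of $Z$ of dimension $k - c$ (here I would cite the third bullet on the contractibility index from \cite{fl15}, together with the hypothesis $c \leq n-d$ to know the relevant cycles are genuinely $k$-dimensional over a $(k-c)$-dimensional base). Consider the family $q: R \to S$ of codimension $k-c$ subvarieties of $Z$ cut out by general members of $|A|$ (complete intersections $A^{k-c}$), pulled back via $\pi$ to a flat family on $X$ — these have codimension $k-c$ in $X$ as well since $\pi$ is surjective of pure relative dimension. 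The point is that a member of $q$ is a candidate for intersecting the ``spread'' of the family $p$ over $Z$: imposing that a member of $p$ meet $t$ general such pullback-complete-intersections forces it to pass through roughly $t$ general points of $Z$. I would then relate $\mc(p)$ to $\mc(p; q)$, the mobility count of $p$ relative to this family, noting that a point of $X$ general in its fiber pins down a point of $Z$, so up to the fiber dimension $c$ the two counts agree in growth rate; more carefully, I expect $\mc(p) \lesssim \mc(p;q) \cdot (\text{contribution from the } c\text{-dimensional fibers})$, and the fiber contribution is itself bounded by a mobility count of a family of $c$-cycles in the fibers, handled again by Lemma \ref{familymcbound} applied inside a general fiber.

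The cleanest route, and the one I would actually write, is to combine both intersections at once: cut a general member of $p$ by $k-c$ general pullback divisors $\pi^{*}A_1, \ldots, \pi^{*}A_{k-c}$ to get a family $p' = p \cdot \pi^{*}A^{k-c}$ of effective $c$-cycles on $X$, each contracted by $\pi$ (since after these cuts the image in $Z$ is $0$-dimensional). By Lemma \ref{linearseriesintest} applied $k-c$ times, $\mc(p') \geq \min\{\mc(p) - (k-c), \text{[a quantity growing like } m^{d}\text{]}\}$ — the linear series $|A|$ on $Z$ has $\mc(|mA|) \sim c_0 m^{d}$, pulled back to $X$ — while $p'$ is a family of effective $c$-cycles contracted by $\pi$, hence supported in fibers, so $\mc(p')$ is bounded by the mobility count of a family of $c$-cycles on the $e$-dimensional fibers, and since $[p'] \cdot (\text{fiber hyperplane})^{c}$ grows linearly in $m$, Lemma \ref{familymcbound} (or directly \cite[Theorem 5.12]{lehmann16} on a general fiber) gives $\mc(p') \lesssim m^{e/(e - c + ?)}$ — here I must be careful to get $\mc(p') \lesssim C m^{c \cdot \frac{1}{1}} = Cm$ when $c = e$, recovering Lemma \ref{naivebound}'s predecessor, and the general degree-$\frac{r}{r-k}$ exponent otherwise. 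Solving the resulting inequality
\begin{equation*}
\min\{\mc(p), c_0 m^{d}\} \lesssim C\, m \cdot m^{\frac{k-c}{d}\cdot(\text{exponent})}
\end{equation*}
for the growth exponent of $\mc(m\alpha)$ in $m$, and translating through the normalization in Definition \ref{iitakadimdef} (multiplying by $n-k$ and dividing by the exponent), yields exactly $\kappa(\alpha) \leq (n-k)\cdot \frac{d}{d-k+c}$. The main obstacle I anticipate is the bookkeeping in the last step: one must choose the multiple of $m$ correctly (as in the proof of Proposition \ref{ampleintandiitakadim}, one intersects with $\lceil m^{\theta}\rceil$ members of the linear series for the right $\theta$) so that the two competing bounds — the $m^{d}$ growth from the base linear series and the $m^{c}$-type growth from the fibers — balance, and then check that the resulting exponent is precisely $\frac{d}{d-k+c}$ after accounting for the codimension $n - k$ of the (intersected) class; getting the fiber-dimension contribution to match the claimed formula exactly, rather than just up to a constant exponent, is where I expect to spend the most care.
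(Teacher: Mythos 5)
Your overall strategy --- cut members of the family by general divisors pulled back from $Z$ and bound what remains --- can be pushed through, but as written it has concrete problems, and it is in any case far more elaborate than the paper's argument. First, a step that would fail: in your first route you want to apply Lemma \ref{familymcbound} with $q$ the family of pullbacks of codimension-$(k-c)$ complete intersections from $Z$. That lemma requires the members of $q$ to have codimension $r > k$; here $r = k-c \leq k$, so the hypothesis fails and the lemma gives nothing. Second, in your ``cleanest route'' the bound on $\mc(p')$ for the contracted family of $c$-cycles is not a fiberwise mobility count, and the exponent $e/(e-c)$ never enters: since general points of $X$ have distinct, general images in $Z$, a $\pi$-contracted cycle can contain at most as many general points as the number of fibers supporting it, which is bounded linearly by its degree against $\pi^{*}A^{k-c}\cdot H^{c}$, i.e.\ by $Cmt^{k-c}$ if you cut with members of $\pi^{*}|tA|$. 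This observation is what you need in place of your ``$m^{e/(e-c+?)}$'' term. Third, the balancing you defer is essential and your displayed version of it is off: with $t=m$ the inequality only yields $\mc(p)\lesssim m^{k-c+1}$, which is weaker than the claim unless $d-k+c=1$; you must take $t\approx m^{1/(d-k+c)}$ so that $c_{0}t^{d}$ dominates $Cmt^{k-c}$, whereupon $\mc(p)\leq (k-c)+Cmt^{k-c}\lesssim m^{d/(d-k+c)}$ as desired. (You would also need to justify iterating Lemma \ref{linearseriesintest}, which is stated for families of irreducible cycles, $k-c$ times.)

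For comparison, the paper's proof avoids all of this machinery: it simply takes the images in $Z$ of the members of a family representing $m\alpha$. By the definition of the contractibility index each component of such an image has dimension at most $k-c$, its $H$-degree grows linearly in $m$, and general points of $X$ map to general points of $Z$, so $\mc(m\alpha)$ is bounded by the mobility count of the image family of $(k-c)$-dimensional subschemes of the $d$-dimensional variety $Z$; the mobility bound of \cite{lehmann16} then gives $\mc(m\alpha)\leq Cm^{d/(d-k+c)}$ directly. Your cutting construction is, in effect, re-deriving this pushforward bound by hand.
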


Note that when $c = n-d$, this simplifies to $\kappa(\alpha) \leq d$ as desired.

\begin{proof}
The statement is clear if $\kappa(\alpha) \leq n-k$, so we may assume otherwise.  Fix an ample divisor $A$ on $X$ and an ample divisor $H$ on $Z$.

Let $m$ be a positive integer and let $p_{m}$ be a family of effective cycles representing $m\alpha$ with maximal mobility count.  The image of a cycle in the family is a subscheme of $Z$; every component has dimension at most $k-c < \dim(Z)$ and has $H$-degree bounded linearly in terms of $\alpha \cdot A^{k}$.  It is clear that $\mc(p_{m})$ is bounded above by the mobility count of the images.  Using  \cite{lehmann16} to bound the mobility count of the images we see that there is some constant $C$ such that $\mc(p_{m}) \leq C m^{d/d-k+c}$.
\end{proof}

We next turn our attention to the case when $c$ is close to $k$.

\begin{lem} \label{reldimequal}
Let $\pi: X \to Z$ be a surjective morphism of projective varieties of relative dimension $e$.  Suppose that $\alpha \in \Eff_{k}(X)_{\mathbb{Z}}$ has contractibility index $c = k$.  Then $\kappa(\alpha) \leq n-c$.
\end{lem}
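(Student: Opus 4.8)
The hypothesis $c = k$ means $\alpha \cdot \pi^*A = 0$ for an ample divisor $A$ on $Z$, i.e.\ $\alpha$ is contracted ``in dimension zero'': every effective cycle of class proportional to $\alpha$ maps to a finite set of points under $\pi$. So effective representatives of $m\alpha$ live in fibers of $\pi$. The strategy is to localize over $Z$: a family $p\colon U \to W$ of effective cycles representing $m\alpha$ decomposes (after stratifying $W$) into sub-families whose members are supported over finitely many points of $Z$; we want to bound the mobility count of such families by $m^{(n-k)/(n-c)}$ up to a constant, which gives $\kappa(\alpha) \le n - c$.

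\textbf{Key steps.} First, reduce to the case of an irreducible family $p\colon U \to W$ of maximal mobility count, with $U$ having irreducible generic fiber (as in Section~\ref{familysec}). Since $\alpha \cdot \pi^*A = 0$, each member $Z_w$ of the family has $\pi(Z_w)$ a finite set of points; after shrinking $W$ and further decomposing, we may assume $\pi(Z_w)$ consists of a constant number $N$ of points, giving a map $W \dashrightarrow \Sym^N(Z)$, or more usefully, a generically finite collection of maps $W \dashrightarrow Z$. The members $Z_w$ then lie in fibers $X_{z}$, which have dimension $e$ over the locus where $\pi$ is equidimensional (and dimension $\ge e$ in general). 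Second, I would compare $\mc(p)$ to the mobility count of the ``fiberwise'' family: the $X \times \cdots \times X \to Z \times \cdots \times Z$ composition shows that the points swept out must have their $\pi$-images constrained, so one extra general point of $X$ costs essentially one point in a fiber of dimension $\le e$ beyond the first $N$ images. Third — the technical heart — use Lemma~\ref{familymcbound}: take the family $q\colon R \to S$ of fibers of $\pi$ (over its equidimensional locus, via a flattening), which are equidimensional of codimension $n - e \ge n - (k) \ge$ the relevant bound; apply the lemma with $r = n - e$ and $k$ to get $\mc(p; q) \le C([p]\cdot A^k + 1)^{r/(r-k)}$. Since $[p] \cdot A^k$ grows linearly in $m$ and $r - k = n - e - k$... — but here $c = k$ forces us to be careful: the codimension $r$ relevant is actually $n - (k - c) = n$ fibers don't quite work, so instead I expect to intersect with a general complete-intersection subvariety of $X$ to cut the members down and reduce to counting images in $Z$ as in Lemma~\ref{naivebound}, landing on exponent $(n-k)/(n-c)$ after tracking that $\dim\pi(Z_w) \le k - c = 0$.

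\textbf{Main obstacle.} The real difficulty is handling non-equidimensional fibers of $\pi$: over a closed subset of $Z$ the fibers jump in dimension, and members of the family could concentrate there, a priori giving a larger mobility count than the generic-fiber bound predicts. I expect to handle this by a Noetherian induction on $Z$ (stratify $Z$ by fiber dimension, and on each stratum the relative dimension is controlled), combined with the observation that the contractibility index is semicontinuous under restriction — the inequality $c \le k$ together with $c \ge k - \dim Z$ pins things down enough. A secondary subtlety is that when $c = k$ but $e > k$, the bound $n - c = n - k$ is exactly the codimension of $\alpha$, so the claim is that $\kappa(\alpha)$ takes its \emph{minimal} value; this should follow because the image in $Z$ is $0$-dimensional, so effectively all the mobility comes from moving inside fibers, which are disjoint and each carry at most a $1$-parameter-in-$m$ worth of ``point-spreading'' — essentially the argument of Lemma~\ref{naivebound} with $d$ replaced by $\dim \pi(\text{support}) = 0$, degenerating to the linear bound $\mc(p_m) \le Cm$.
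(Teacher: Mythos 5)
Your proposal buries a one\nobreakdash-line argument under machinery that is either unnecessary or, by your own admission, does not work: the central technical device you propose --- applying Lemma \ref{familymcbound} with $q$ the family of fibers of $\pi$ --- is one you then concede ``doesn't quite work'' (correctly: the relevant exponent degenerates when $c=k$), and the fallbacks ($\Sym^{N}(Z)$, complete-intersection slicing, Noetherian induction over strata of $Z$ by fiber dimension) are all beside the point. The paper's proof is a direct count. If $V$ is an effective cycle of class $m\alpha$ through $b$ general points of $X$, then $c=k$ gives $m\alpha\cdot\pi^{*}A=0$; since the pseudo-effective cone is salient, each component $V_{i}$ of $V$ separately satisfies $V_{i}\cdot\pi^{*}A=0$ and hence maps to a single point of $Z$ (this is the cited fact that the image of a representative has dimension at most $k-c=0$). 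So $\pi(V)$ is a finite set whose cardinality is at most the number of components of $V$, which is at most $m\,(\alpha\cdot H^{k})$ for a very ample $H$ on $X$ since each component has $H$-degree at least $1$. On the other hand the $b$ general points of $X$ have $b$ distinct images in $Z$, so $b\leq\#\pi(V)\leq Cm$. Hence $\mc(m\alpha)\leq Cm$ and $\kappa(\alpha)\leq n-k=n-c$.

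Your final paragraph does contain the right idea (``the image in $Z$ is $0$-dimensional \ldots degenerating to the linear bound $\mc(p_{m})\leq Cm$''), so the proposal is salvageable, but two corrections are needed. First, the linear bound on $\#\pi(V)$ comes from counting components of the cycle, not from any property of the fibers of $\pi$; consequently the ``main obstacle'' you identify --- non-equidimensional fibers and the need to stratify $Z$ --- is a non-issue, because fibers of $\pi$ never enter the argument. Second, the slogan ``all the mobility comes from moving inside fibers'' points the wrong way: deforming a component within one fiber cannot pick up additional general points of $X$, since distinct general points of $X$ lie over distinct points of $Z$; the entire content of the bound is that each imposed general point costs one point of $\pi(V)$, and there are only $O(m)$ of those available.
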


\begin{proof}
Let $V$ be an effective cycle representing $m\alpha$ through $b$ general points of $X$.  Then $\pi(V)$ is a union of points on $Z$, which contains $b$ general points as a subset.  Since the cardinality of $\pi(V)$ can only grow linearly with $m$, we obtain the result.
\end{proof}

\begin{thrm} \label{contractedthrm}
Let $\pi: X \to Z$ be a morphism from a projective variety of dimension $n$ to a projective variety of dimension $d$.  Let $\alpha \in \Eff_{k}(X)$ be a class of $\pi$-contractibility index $c$.  If $c = k-1$, then $\kappa(\alpha) \leq n-c$.
\end{thrm}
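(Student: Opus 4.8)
The plan is as follows. If $c>e$, where $e=\mathrm{reldim}(\pi)$, then $\alpha$ is $\pi$-exceptional and $\kappa(\alpha)\le 0\le n-c$ by the lemma on exceptional classes proved above; and if $c=e$ (equivalently $\dim Z=n-k+1$), then the naive bound of Lemma~\ref{naivebound} already gives $\kappa(\alpha)\le (n-k)\cdot\frac{\dim Z}{\dim Z-k+c}=n-c$. So I may assume $k-\dim Z<c=k-1<e$, and (by homogeneity) that $\alpha\in\Eff_k(X)_{\mathbb Z}$.

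The first step is to show that every component $V_i$ of every effective cycle numerically equivalent to a multiple of $\alpha$ satisfies $\dim\pi(V_i)\le 1$. Fix an ample divisor $A$ on $Z$ and an ample divisor $H$ on $X$. The class $\pi^{*}A^{2}\cdot H^{k-2}\in N^{k}(X)$ is a product of nef divisor classes, hence pairs non-negatively with effective $k$-cycles; pairing it with an effective cycle $\sum_i a_i V_i\equiv m\alpha$ and using $m\alpha\cdot\pi^{*}A^{2}=0$ gives $\sum_i a_i\bigl([V_i]\cdot\pi^{*}A^{2}\cdot H^{k-2}\bigr)=0$, with every summand $\ge 0$ and every $a_i>0$, so $[V_i]\cdot\pi^{*}A^{2}\cdot H^{k-2}=0$ for all $i$. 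If $\dim\pi(V_i)\ge 2$ then for general $A_1,A_2\in|A|$ the cycle $V_i\cap\pi^{-1}(A_1)\cap\pi^{-1}(A_2)$ is a non-zero effective $(k-2)$-cycle, and intersecting it with $H^{k-2}$ gives a positive number; this contradiction forces $\dim\pi(V_i)\le 1$.

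Next, using the reductions of Section~2, replace a family for $m\alpha$ of maximal mobility count by a diagonal sub-family of a family sum of irreducible families $p_j$ with irreducible members and $[p_j]\preceq m\alpha$, so that $\mc(m\alpha)\le\sum_j\mc(p_j)$. Since $\sum_j H^{k}\cdot p_j=H^{k}\cdot m\alpha$ and $t\mapsto t^{(n-c)/(n-k)}$ is superadditive (its exponent being $\ge 1$), it suffices to prove $\mc(p_j)\le C\,(H^{k}\cdot p_j)^{(n-c)/(n-k)}$ for a single irreducible family $p$ whose general member $V$ is an irreducible variety, necessarily with $\dim\pi(V)\le 1$. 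If $\pi(V)$ is a point then $V$ lies in a fibre of $\pi$; since two general points of $X$ have distinct images in $Z$, a member of $p$ contains at most one general point of $X$, so $\mc(p)\le 1$ in that case.

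So assume $\pi(V)$ is a curve $C\subset Z$ with $(k-1)$-dimensional fibres, and write $\delta=H^{k}\cdot p$; the goal is $\mc(p)\le C\delta^{(n-k+1)/(n-k)}$. A general point $x$ of $X$ on $V$ forces both $\pi(x)\in C$ and $x\in V\cap\pi^{-1}(\pi(x))$, a $(k-1)$-cycle in the fibre of $\pi$ over the general point $\pi(x)\in Z$. The idea is to bound $\mc(p)$ by balancing the mobility of the family of image curves $C$ — which are curves of degree linear in $\delta$ in $Z$ — against the degree cost, measured by $\delta$, of the fibrewise $(k-1)$-cycles over $C$: forcing $C$ to pass through many general points of $Z$ requires either high degree or high genus of $C$, and high genus in turn forces a correspondingly large amount of variation of the fibrewise cycles over $C$, all of which is controlled by $\delta$. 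Quantitatively I would extract this from Lemma~\ref{familymcbound}, applied both to the family of image curves in $Z$ and, after suitable further reductions, to the fibrewise cycles, and then combine the two estimates. Making this trade-off precise — so that the combined bound is $\delta^{(n-k+1)/(n-k)}$ rather than the weaker $\delta^{\dim Z/(\dim Z-1)}$ coming from the image curves alone — is the technical heart of the argument, and is where I expect essentially all of the difficulty to lie.
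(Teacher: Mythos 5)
Your reductions are all sound: the cases $c>e$ and $c=e$ are indeed dispatched by the exceptional-class lemma and Lemma~\ref{naivebound} respectively, the argument with $\pi^{*}A^{2}\cdot H^{k-2}$ correctly forces every component of every representing cycle to map to a subvariety of $Z$ of dimension $\leq k-c=1$, and the reduction to a single irreducible family via superadditivity of $t\mapsto t^{(n-c)/(n-k)}$ is essentially Lemma~\ref{irrfamconcentrates}. But the proof stops exactly where the theorem actually lives: you state the balancing heuristic for the case $\dim\pi(V)=1$ and acknowledge you have not made it precise. That is a genuine gap, not a routine verification. A two-step scheme that separately bounds the mobility of the image curves in $Z$ and the variation of the fibrewise cycles does not obviously combine to give the exponent $(n-c)/(n-k)$; as you note yourself, the image curves alone only give $d/(d-1)$, and it is not clear how ``high genus forces variation of the fibrewise cycles'' is to be quantified in terms of $\delta$ at all.

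The paper's resolution is a single repackaging step that your outline is missing. After first reducing (via Theorem~\ref{birbeh} and pushforward along a generically finite map) to $X=\mathbb{P}^{d}\times\mathbb{P}^{e}$ with $\pi$ the first projection, each irreducible member $V$ is encoded as a \emph{curve} $T$ in $\mathbb{P}^{d}\times\Chow_{\tau,c}(\mathbb{P}^{e})$: the base change of $V$ to the normalization of the image curve $V'$ is flat, hence gives a map $V'^{n}\to\Chow_{\tau,c}(\mathbb{P}^{e})$ recording the fibrewise $(k-1)$-cycles, and the degree of $T$ against a natural ample divisor is controlled linearly by $[p]\cdot(A+H)^{k}$. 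The condition ``$x\in V$'' then becomes ``$T$ meets the codimension-$(n-c)$ incidence subvariety $\{\pi(x)\}\times\{[W]: \rho(x)\in W\}$,'' so $\mc_{X}(p)=\mc(\widetilde{p};\widetilde{q})$ for the flat incidence family $\widetilde{q}$, and a \emph{single} application of Lemma~\ref{familymcbound} (curves against a codimension-$(n-c)$ family, i.e.\ $r=n-c$, cycle dimension $k-c=1$) yields the exponent $(n-c)/(n-c-1)=(n-c)/(n-k)$ in one stroke. Without some such device that treats the image curve and the fibrewise variation simultaneously, I do not see how your proposed combination of two separate estimates closes; so the proposal as written does not constitute a proof.
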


\begin{proof}
Set $e = n-d$.

We start with several reductions.  Let $\phi: X' \to X$ be a birational model and let $\beta \in \Eff_{k}(X)$ be a class such that $\pi_{*}\beta = \alpha$.  Then the contractibility index of $\beta$ for $\pi \circ \phi$ is still $c$.  Thus by Theorem \ref{birbeh} it suffices to replace $X$ by any higher birational model.

In particular, we may suppose that $X$ admits a map $\rho: X \to Y$ where $Y$ is a variety of dimension $e$ and $\rho|_{F}$ is generically finite for a general fiber $F$ of $\pi$.  Such $X$ naturally carries a generically finite map surjecting onto $Z \times Y$, and hence also to $\mathbb{P}^{d} \times \mathbb{P}^{e}$.  Since the Iitaka dimension can only increase under pushforward to this variety, and the contractibility index can also only increase, it suffices to consider the case when $X = \mathbb{P}^{d} \times \mathbb{P}^{e}$ and $\pi$ is the first projection map.  In this setting, we let $\rho$ denote the second projection map, $A = \pi^{*}\mathcal{O}(1)$, $H = \rho^{*}\mathcal{O}(1)$.

Suppose that $p: U \to W$ is a family of irreducible cycles on $X$ such that $[p] \preceq m\alpha$.  Each irreducible cycle $V$ in the family is mapped to a subvariety $V' \subset \mathbb{P}^{d}$ of dimension $k-c=1$.  We associate the following invariants to $V$:
\begin{itemize}
\item $\sigma$ is the degree of $V'$
\item $\tau$ is the degree of $V \cap F \subset \mathbb{P}^{e}$, where $F = \pi^{-1}(p)$ for a general point $p \in V'$.
\end{itemize}

Since $V'$ is a curve we have that the base-change of $V$ to the normalization $V'^{n}$ of $V'$ is flat, so we can equally well think of each member of our family $V$ as a family of cycles in projective space defined by a map $f: V'^{n} \to \Chow_{\tau,c}(\mathbb{P}^{e})$, where $\Chow_{\tau,c}$ denotes the subvarieties of dimension $c$ and degree $\tau$.  We can realize $\Chow_{\tau,c}(\mathbb{P}^{e})$ as a subvariety of $\mathbb{P}H^{0}(\mathbb{G},L^{\otimes \tau})$ where $\mathbb{G} = G(e-c,e+1)$ is the Grassmannian and $L$ is the pullback of $\mathcal{O}(1)$ under the Pl\"ucker embedding.  Let $M$ denote the very ample divisor on $\Chow$ induced by pulling back $\mathcal{O}(1)$ from this projective space.  Let $\nu_{1}$ and $\nu_{2}$ denote the projection maps on $\mathbb{P}^{d} \times \Chow_{\tau,c}(\mathbb{P}^{e})$ and let $T \subset \mathbb{P}^{d} \times \Chow_{\tau,c}(\mathbb{P}^{e})$ denote the image of $V'^{n}$.  Note that
\begin{equation*}
T \cdot \nu_{1}^{*}\mathcal{O}(1) = \sigma \leq V \cdot A \cdot H^{k-1} \qquad \textrm{and} \qquad T \cdot \nu_{2}^{*}M = V \cdot H^{k}
\end{equation*}
so that degree of $T$ against the ample divisor $\nu_{1}^{*}\mathcal{O}(1) + \nu_{2}^{*}M$ is bounded linearly in terms of the class of $\alpha$.

Fix a component $\mathcal{C} \subset \Chow_{\tau,c}(\mathbb{P}^{e})$ which contains the image of $T$.  Let $q: R \to \mathbb{P}^{e}$ denote the family of subschemes of $\mathcal{C}$ where the fiber over $x \in \mathbb{P}^{e}$ parametrizes all cycles containing $x$.  The induced map $R \to \mathcal{C}$ is flat by equivariance.  We let $\widetilde{q}: \mathbb{P}^{d} \times R \to \mathbb{P}^{d} \times \mathbb{P}^{e}$ denote the corresponding family on the product $\mathbb{P}^{d} \times \mathcal{C}$.  The subvarieties parametrized by $\widetilde{q}$ have codimension $d+e-c = n-c$.  Note that
\begin{equation*}
\mc_{X}(p) = \mc_{\mathbb{P}^{d} \times \mathcal{C}}(\widetilde{p};\widetilde{q})
\end{equation*}
where $\widetilde{p}$ is the family of $(k-c)$-dimensional subvarieties $T$ on $\mathbb{P}^{d} \times \mathcal{C}$ induced by $p$.  Consider the very ample divisor $\widetilde{A} = \nu_{1}^{*}\mathcal{O}(1) + \nu_{2}^{*}M$.  By Lemma \ref{familymcbound}, we have
\begin{equation*}
\mc_{X}(p) \leq 2^{(k-c)(n-c)+3(n-c)} \left( [\widetilde{p}] \cdot \widetilde{A}^{k-c} + 1 \right)^{n-c/n-c-(k-c)}
\end{equation*}
By the intersection calculations above, $[\widetilde{p}] \cdot \widetilde{A}^{k-c}$ is bounded linearly in terms of $[p] \cdot (A+H)^{k}$.  Thus we see that for such an irreducible family $p$, there is some constant $C$ such that $\mc(p) \leq Cm^{n-c/n-k}$.  We conclude by Lemma \ref{irrfamconcentrates} that $\kappa(\alpha) \leq n-c$.
\end{proof}

The upper bound proposed by Conjecture \ref{contractedconj} is optimal in a strong sense: given any morphism $\pi: X \to Z$ and any choice of $k,c$ satisfying the necessary constraints, one can always find a class $\alpha \in \Eff_{k}(X)$ of contractibility index $c$ and Iitaka dimension $\geq n-c$.

\begin{exmple}
We first set parameters.  Choose integers $0 < d < n$ and integers $k,c$ such that $0 < k < n$ and $\min\{0,k-d\} < c < \min\{k+1,n-d\}$.

Suppose that $\pi: X \to Z$ is a morphism where $X$ has dimension $n$ and $Z$ has dimension $d$.  We construct a $k$-cycle class $\alpha$ on $X$ with contractibility index $c$ and with $\kappa(\alpha) \geq n-c$.  The first step is a reduction: suppose that there is a diagram
\begin{displaymath}
    \xymatrix{
        X' \ar[r] \ar[d]_{\pi'} & X \ar[d]^{\pi} \\
        Z' \ar[r]       & Z}
\end{displaymath}
where the horizontal maps are generically finite.  We claim that it is enough to construct a suitable class $\alpha'$ on $X'$ for $\pi'$, if we assume in addition that $\alpha'$ has a multiple represented by an irreducible cycle $V$ through a general point.  Indeed, the pushforward $\alpha$ on $X$ will have at least as large of an Iitaka dimension as $\alpha'$.  Furthermore, the contractibility index of $\alpha'$ is simply the relative dimension of $\pi'|_{V}$, and it is clear that this quantity is preserved by pushing forward $V$.

By applying Noether normalization to the function field $K(Z)$, after replacing $X$ and $Z$ by birational models we may assume that there is a diagram
\begin{displaymath}
    \xymatrix{
        X \ar[r]^{f} \ar[d]_{\pi} & W \ar[dl]_{g} \\
        Z       & }
\end{displaymath}
where $W$ has dimension $n-c$.  Let $H$ be an ample divisor on $W$ and set $\alpha = f^{*}H^{n-k}$.  Since $\alpha$ is represented by the preimages of a big $(k-c)$-cycle on $W$,  $\mc(m\alpha) \geq Cm^{n-c/n-k}$ for some constant $C$ and for sufficiently large $m$.  Furthermore, it is clear that that $\alpha$ is represented by irreducible cycles whose image in $Z$ has dimension $k-c$.  Thus $\alpha$ has all the desired properties.
\end{exmple}

\begin{rmk}
Note that the above construction is somewhat better than the ``naive'' lower bound given by complete intersections.  For example, consider again the class $\alpha = A^{2} + A \cdot H$ on $\mathbb{P}^{2} \times \mathbb{P}^{2}$ as in Example \ref{p2timesp2one}.  As demonstrated there, $\mc(m\alpha) \sim Cm^{3/2}$.   However, if we intersect members of $|m_{1}H|$ and $|m_{2}(H+A)|$ where $m_{1}m_{2}=m$, we can only obtain cycles through $Cm^{4/3}$ general points.
\end{rmk}

\section{Grassmannians}

$G(m,n)$ parametrizes $m$-dimensional subplanes of an $n$-dimensional complex vector space.  Fix a complete flag $V_{0} \subset \ldots \subset V_{n}$ in our vector space.  Given a non-increasing tuple of integers $\lambda = (\lambda_{1},\ldots,\lambda_{m})$ whose components $\lambda_{i}$ satisfy $0 \leq \lambda_{i} \leq n-m$, we let $\sigma_{\lambda}$ denote the class of the Schubert variety parametrizing linear subspaces $W$ satisfying for all $i$
\begin{equation*}
\dim(W \cap V_{n-m+i-\lambda_{i}}) \geq i.
\end{equation*}
As discussed in the introduction, there has been previous extensive work describing various forms of ``rigidity'' for Schubert classes on Grassmannians.  The study of the Iitaka dimension is a variation on this theme which yields interesting information for all classes.

\subsection{Iitaka dimensions on $G(2,n)$}

\begin{thrm} \label{iitakadimg2n}
The Iitaka dimension of a Schubert cycle on $G(2,n)$ is determined by the following list:
\begin{itemize}
\item $\kappa(\sigma_{1}) = \kappa(\sigma_{n-2,n-3}) = 2(n-2)$.
\item $\kappa(\sigma_{r}) = n-2$ for $1 < r \leq n-2$.
\item $\kappa(\sigma_{r,r-1}) = 2r$ for $1 < r < n-2$.
\item $\kappa(\sigma_{r,s}) = r+s$ otherwise.
\end{itemize}
\end{thrm}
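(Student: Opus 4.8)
The plan is to treat the four cases separately, using the Pieri/Giambelli structure of Schubert classes on $G(2,n)$ together with the general machinery developed above (Lemmas \ref{irrfamconcentrates}, \ref{linearseriesintest}, \ref{familymcbound}, and the contracted-class results of Section 4). The codimension of $\sigma_{r,s}$ is $r+s$, so in each case the inequality $\kappa(\sigma_{r,s}) \geq r+s$ is automatic, and the work is in proving the matching upper bound (and, in the two special cases, a stronger lower bound).

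First I would dispose of the extremal cases. The class $\sigma_1$ is big: it is ample (it is a positive multiple of the hyperplane class under the Plücker embedding), so $\kappa(\sigma_1) = \dim G(2,n) = 2(n-2)$ by the big-class criterion of \cite{lehmann16} quoted in the introduction. Dually, $\sigma_{n-2,n-3}$ is the class of a line in $G(2,n)$, and I expect it to be big in $N_1$: lines through a general point sweep out the Schubert divisor $\sigma_1$, and by the curve dichotomy of \cite{8authors} (Example \ref{mobilityofcurves}) it suffices to check that two general points of $G(2,n)$ are joined by a connected chain of lines — which holds because $G(2,n)$ is covered by lines and is, e.g., rationally connected by lines. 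Hence $\kappa(\sigma_{n-2,n-3}) = 2(n-2)$ as well.

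Next, the special divisor case $\kappa(\sigma_r) = n-2$ for $1 < r \le n-2$: here $\sigma_r$ is a Weil (in fact Cartier) divisor class, so by Section \ref{divisorsec} its Iitaka dimension equals the maximal classical Iitaka dimension of a $\mathbb{Q}$-divisor in the class. Since $\Pic(G(2,n)) = \mathbb{Z}$, every divisor in the class is proportional to $\sigma_r$, which has positive degree but is not ample (for $r>1$ the class $\sigma_r$ is not a multiple of $\sigma_1$ — indeed $\sigma_1$ is the ample generator, and $\sigma_r$ for $r\ge 2$ is not proportional to it); so its classical Iitaka dimension is the dimension of the image of the map given by $|m\sigma_r|$, which collapses the $\sigma_r$-trivial curves. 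One computes this image has dimension $n-2$ (this amounts to identifying the linear system $|\sigma_r|$, whose sections are spanned by Schubert-type divisors, and checking the associated morphism has $(n-2)$-dimensional image — concretely $\sigma_r$ is the locus of $W$ meeting a fixed $(n-r)$-plane, a Schubert divisor whose linear span gives a morphism to a projective space of the appropriate size). This identification is the main computational point in this case.

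The remaining family $\sigma_{r,s}$ is where the contracted-class techniques of Section 4 enter, and I expect this to be the main obstacle. For $s < r-1$ (the ``otherwise'' case giving $\kappa = r+s$) and for $s = r-1$ with $1 < r < n-2$ (giving $\kappa = 2r$, which is strictly bigger than the codimension $2r-1$), the idea is: any effective cycle with class proportional to $\sigma_{r,s}$ has image of controlled dimension under a suitable projection $G(2,n) \dashrightarrow G(2,n')$ or under an incidence-correspondence map to a flag variety, so that $\sigma_{r,s}$ is, up to birational modification, a contracted class for an explicit morphism; one then reads off the contractibility index $c$ and applies Lemma \ref{naivebound}/Lemma \ref{reldimequal}/Theorem \ref{contractedthrm}. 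The upper bound $\kappa(\sigma_{r,s}) \le r+s$ should follow because the generic cycle in the class, intersected with enough general Schubert divisors or sweep-out families, can only pass through $\approx m^{\,(r+s)/(2(n-2)-(r+s))}\cdot(\text{deg})$ general points — formalized via Lemma \ref{familymcbound} applied to the family of Schubert subvarieties of complementary dimension, exactly as in the proof of Theorem \ref{contractedthrm}. For the boundary subcase $s = r-1$, the extra growth comes from a complete-intersection-type construction inside the sub-Grassmannian geometry (mirroring the $\mathbb{P}^2\times\mathbb{P}^2$ example), giving $\mc(m\sigma_{r,r-1}) \gtrsim m^{2r/(2r-1)}\cdot(\ldots)$ and hence $\kappa \ge 2r$; the matching upper bound again uses Lemma \ref{familymcbound} with the right auxiliary family of Schubert cycles. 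The hard part throughout is choosing, for each $(r,s)$, the correct auxiliary projection or family of subvarieties so that the abstract bounds become sharp; the case analysis on whether $\sigma_{r,s}$ lies on an extremal face of $\Eff_{2(n-2)-(r+s)}(G(2,n))$ is what dictates the answer, and verifying this face structure (via Schubert calculus) is the core of the argument.
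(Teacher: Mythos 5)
There is a genuine gap, and in fact a concrete error. For $1 < r \leq n-2$ the class $\sigma_{r} = \sigma_{r,0}$ is \emph{not} a divisor class on $G(2,n)$: it has codimension $r$, and only $\sigma_{1}$ is a divisor. Your argument for $\kappa(\sigma_{r}) = n-2$ routes the entire case through Section \ref{divisorsec}, which is therefore inapplicable; it is also internally inconsistent ($\Pic(G(2,n)) = \mathbb{Z}\sigma_{1}$, so if $\sigma_{r}$ were a divisor class it would be a multiple of $\sigma_{1}$, contradicting your own claim that it is not). The actual proof for this case classifies all irreducible cycles of class $m\sigma_{r}$ as loci of lines meeting a fixed codimension $r+1$, degree $m$ subscheme of $\mathbb{P}^{n-1}$ (Lemma \ref{classificationofsigmar}), reducing the computation to counting general lines met by such a subscheme, with the upper bound coming from Lemma \ref{familymcbound} applied on $\mathbb{P}^{n-1}$ with $q$ the family of lines.

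More broadly, the essential ingredient missing from your plan is the rigidity-type classification of effective cycles representing multiples of Schubert classes: the paper's upper bounds in every nontrivial case rest on structure theorems for such cycles (the lemma just mentioned for $\sigma_{r}$; Segre's classification, Lemma \ref{classificationofsigmarr-1}, for $\sigma_{r,r-1}$, which reduces the count to curves in $G(n-r,n)$ meeting Schubert varieties of class $\sigma_{r,r}$; and an inductive argument built on Bryant's theorem showing that for $s < r-1$ every cycle of class $m\sigma_{r,s}$ either sweeps out a degenerate subvariety or consists of lines through a fixed $(n-2-r)$-plane). Applying Lemma \ref{familymcbound} directly to families on $G(2,n)$ with an auxiliary family of complementary Schubert cycles, as you propose, does not yield the stated exponents without these classifications; and the contracted-class machinery of Section 4 cannot be invoked because $G(2,n)$ has Picard number one, so it admits no nonconstant morphism with positive-dimensional fibers and no Schubert class is $\pi$-contracted for any genuine morphism $\pi$. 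Your treatment of $\sigma_{1}$ and $\sigma_{n-2,n-3}$ is fine (both spaces $N^{1}$ and $N_{1}$ are one-dimensional, so these classes are big and the paper calls this case obvious), and your guessed growth rate $m^{2r/(2r-1)}$ for $\sigma_{r,r-1}$ is correct, but the mechanism producing it and the matching upper bound both require Segre's theorem, which your plan does not supply.
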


\begin{rmk}
Note that Theorem \ref{iitakadimg2n} does not address the boundary classes that do not lie on extremal rays.  It would be interesting to see what behavior to expect along the rest of the pseudo-effective cone.
\end{rmk}

\begin{rmk}
The multi rigid classes are classified by \cite{rt12}, \cite{robles13} (see also \cite{bryant05}): on $G(2,n)$ multi rigid classes have the form $\sigma_{j,j}$ and $\sigma_{n-2,0}$.  These will automatically have the minimal Iitaka dimension, but note that the converse implication is not true.
\end{rmk}

\begin{rmk}
Certain features of this theorem should persist for all Grassmannians.  For example, consider $G(m,n)$ and suppose that $1 < t \leq m$.  Then we should have $\kappa(\sigma_{t}) = n-m$ and $\kappa(\sigma_{1^{t}}) = m$.
\end{rmk}

We prove each statement of Theorem \ref{iitakadimg2n} in turn.  Theorem \ref{iitakadimg2n}.(1) is obvious.

\subsubsection{The classes $\sigma_{r}$}

\begin{lem} \label{classificationofsigmar}
Consider $\sigma_{r}$ on $G(2,n)$ for some $1 < r \leq n-2$.  Then any irreducible cycle $V$ of class $m \sigma_{r}$ consists of the set of lines which intersect a fixed codimension $r+1$ degree $m$ subscheme in $\mathbb{P}^{n-1}$.  
\end{lem}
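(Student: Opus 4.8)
The plan is to reconstruct the defining subscheme directly from $V$. Write $\mathbb{P}=\mathbb{P}^{n-1}$; for $y\in\mathbb{P}$ let $\mathbb{P}_{y}\subset G(2,n)$ denote the sub-Grassmannian of lines through $y$ (a linear $\mathbb{P}^{n-2}$ in the Pl\"ucker embedding), and for a closed $Y\subset\mathbb{P}$ let $\Sigma(Y)\subset G(2,n)$ be the variety of lines meeting $Y$. Set
\[
Y:=\{\,y\in\mathbb{P}\mid \mathbb{P}_{y}\subseteq V\,\},
\]
the locus of points all of whose lines lie in $V$ (readily checked to be closed). The goal is to show that $\dim Y=n-2-r$, that $V=\Sigma(Y)$, and that $\deg Y=m$.

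The easy half is this: a line meeting $Y$ at $y$ lies in $\mathbb{P}_{y}\subseteq V$, so $\Sigma(Y)\subseteq V$; and if $\dim Y\geq n-1-r$ then either $\Sigma(Y)=G(2,n)$ or $\dim\Sigma(Y)\geq\dim Y+(n-2)>\dim V$, both impossible, so $\dim Y\leq n-2-r$. Hence, once we know $\dim Y=n-2-r$, the inclusion $\Sigma(Y)\subseteq V$ together with $\dim\Sigma(Y)=\dim Y+(n-2)=\dim V$ and the irreducibility of $V$ force $V=\Sigma(Y)$ (and then $Y$ is irreducible). For the degree, a direct Pieri computation gives $\sigma_{r}|_{\mathbb{P}_{p}}=h^{r}$, where $h$ is the hyperplane class of $\mathbb{P}^{n-2}$; so by Kleiman transversality $V\cap\mathbb{P}_{p}$ is, for general $p$, a reduced cycle of class $mh^{r}$, which for $V=\Sigma(Y)$ coincides with the projection $\pi_{p}(Y)\subset\mathbb{P}^{n-2}$ of $Y$ from $p$; since $\dim Y\leq n-3$, this projection is birational onto its image, so its degree is $\deg Y$ and $\deg Y=m$.

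The substance is the inequality $\dim Y\geq n-2-r$, which I would prove by induction on $n-2-r\geq0$. The base case $r=n-2$ is the multi-rigidity of $\sigma_{n-2}$ (cf.\ \cite{bryant05}, \cite{rt12}, \cite{robles13}): every effective cycle of class proportional to $\sigma_{n-2}$ is a sum of Schubert varieties $\mathbb{P}_{p}$, so an irreducible one is a single $\mathbb{P}_{p}$, whence $m=1$. For the inductive step $r\leq n-3$, cut $V$ by the sub-Grassmannian $G(2,H)\cong G(2,n-1)$ of a general hyperplane $H\subset\mathbb{P}$. By Pieri, $\sigma_{r}\cdot\sigma_{1,1}=\sigma_{r+1,1}$, and under the inclusion $\iota\colon G(2,n-1)\hookrightarrow G(2,n)$ one has $\iota_{*}\sigma_{r}^{G(2,n-1)}=\sigma_{r+1,1}^{G(2,n)}$; combined with Kleiman's theorem (the $G(2,H)$ form a $GL_{n}$-orbit) the cycle $V\cap G(2,H)$ is, for general $H$, a generically reduced proper intersection, hence a sum $\sum_{i}V_{H,i}$ of irreducible cycles $V_{H,i}$ of class $m_{i}\sigma_{r}$ in $G(2,n-1)$ with $\sum_{i}m_{i}=m$ (using that $\sigma_{r}=\sigma_{(r,0)}$ spans an extremal ray of the pseudo-effective cone of $G(2,n-1)$, so that effective classes are nonnegative combinations of Schubert classes). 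Applying the inductive hypothesis to each $V_{H,i}$ yields $V\cap G(2,H)=\Sigma_{H}(Y_{H})$ — the variety of lines in $H$ meeting $Y_{H}$ — for a subscheme $Y_{H}\subset H$ of pure dimension $n-3-r$ and degree $m$.

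Finally I would run a dimension count on the incidence $\mathcal{Y}:=\overline{\{(y,H)\mid H\text{ general},\ y\in Y_{H}\}}\subset\mathbb{P}\times(\mathbb{P}^{n-1})^{\vee}$. Dominating the space of hyperplanes with fibers $Y_{H}$ of pure dimension $n-3-r$ gives $\dim\mathcal{Y}\geq(n-1)+(n-3-r)=2n-4-r$. On the other hand, the characterization $Y_{H}=\{y\in H\mid \mathbb{P}_{y}^{H}\subseteq V\cap G(2,H)\}$ (proved inside $G(2,n-1)$ exactly as in the easy half, and here using $r\geq2$) shows that $y$ occurs in $\mathcal{Y}$ precisely when the codimension-$r$ subvariety $C_{y}:=V\cap\mathbb{P}_{y}$ of $\mathbb{P}_{y}=\mathbb{P}^{n-2}$ contains the hyperplane $\mathbb{P}_{y}^{H}$ of lines in $H$ through $y$; hence $\mathcal{Y}\to\mathbb{P}$ has $\mathbb{P}^{n-2}$-fibers exactly over $Y$ (where $C_{y}=\mathbb{P}_{y}$) and finite fibers over the remaining locus $Y^{\dagger}$ of points $y$ with $C_{y}$ containing some hyperplane of $\mathbb{P}_{y}$. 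Since Kleiman transversality gives $\dim C_{y}=n-2-r<n-3$ for general $y$, such $y$ lie in a proper closed subset, so $\dim Y^{\dagger}\leq n-2$; as $\dim Y^{\dagger}\leq n-2<2n-4-r$ (recall $r\leq n-3$) we conclude $2n-4-r\leq\dim\mathcal{Y}\leq\max\{\dim Y+(n-2),\ \dim Y^{\dagger}\}=\dim Y+(n-2)$, i.e.\ $\dim Y\geq n-2-r$, completing the induction. The main obstacle is this last count — organizing the components of $\mathcal{Y}$ and pinning down the dimension of the excess locus $Y^{\dagger}$ — together with the bookkeeping that makes the induction apply (the Pieri and $\iota_{*}$ identities, extremality of $\sigma_{r}$, and the use of Kleiman's theorem to guarantee that the general sub-Grassmannian section is a generically reduced proper intersection).
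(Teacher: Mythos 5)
Your proof is correct in its essentials but takes a genuinely different route from the paper's. The paper slices $V$ by the Schubert variety of lines through a general point $p$: by Kleiman this intersection is a cycle of class $m\sigma_{n-2,r}$, which \cite[Theorem 5]{bryant05} identifies as the lines through $p$ meeting a codimension-$(r+1)$ subscheme $Q_{p}$; it then shows $Q_{p}$ is independent of $p$ by connecting general points with general $r$-planes $L$ and using the vanishing $\sigma_{r}\cdot\sigma_{n-r-1,n-r-1}=0$ to forbid variation of $Q_{p}$ along $L$, and finishes with a degree computation. You instead build the subscheme intrinsically as $Y=\{y:\mathbb{P}_{y}\subseteq V\}$, reduce everything to the single inequality $\dim Y\geq n-2-r$, and prove that by induction on $n$, slicing by the sub-Grassmannians $G(2,H)$ and running the incidence count on $\mathcal{Y}$, with base case the multi-rigidity of $\sigma_{n-2,0}$. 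The trade-off is clear: the paper's argument is shorter but imports the full classification of $m\sigma_{n-2,r}$-cycles from \cite{bryant05}, while yours needs only the $r=n-2$ case of the lemma itself (multi-rigidity, also quoted in the paper's remarks), at the cost of the more elaborate dimension bookkeeping you already flag as the main obstacle. Two places deserve an explicit sentence in a write-up, though neither is a genuine gap: the equality $\dim\Sigma(Y)=\dim Y+(n-2)$ needs the observation that a general line through a general point of $Y$ meets $Y$ in only finitely many points (valid since $\dim Y\leq n-2$), and the birationality of $\pi_{p}|_{Y}$ holds because the double-point locus of a general projection of a $d$-dimensional variety in $\mathbb{P}^{n-1}$ has dimension at most $2d-n+2$, which is less than $d$ exactly when $d<n-2$ --- so your cited condition $\dim Y\leq n-3$ does suffice.
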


\begin{proof}
Take a general point $p$ on $\mathbb{P}^{n-1}$ and consider the cycle $Z_{p}$ representing the set of lines through $p$, so that $Z_{p}$ has class $\sigma_{n-2}$.  Using generality, the intersection of $Z_{p}$ and $V$ can be done on the cycle level (see \cite[2.Theorem]{kleiman74}).    For convenience we let $U \subset \mathbb{P}^{n-1}$ denote the open set of points  such that the intersection of $Z_{p}$ and $V$ can be done on a set-theoretic level.  The  cycle $Z_{p} \cdot V$ represents the class $m\sigma_{n-2,r}$ and by \cite[Theorem 5]{bryant05} consists of the lines through $p$ intersecting some codimension $r+1$ subscheme $Q_{p}$ of $\mathbb{P}^{n-1}$.
In this way we obtain a codimension $(r+1)$ subscheme $Q_{p}$ of $\mathbb{P}^{n-1}$ for each point $p \in U$.

Now we show that the $Q_{p}$ coincide as $p$ varies.  Take a general codimension $(n-r-1)$ plane $L$ in $\mathbb{P}^{n-1}$.  The locus $T$ parametrizing lines contained in $L$ has class $\sigma_{n-r-1,n-r-1}$.  Again we are in a setting of cycle-level intersection, and since the intersection must vanish we see that $T$ is disjoint from $V$.  Now consider varying $p$ through the points of $U \cap L$.  If the corresponding $Q_{p}$ varied in at least a one-dimensional family, then (after taking closures) we would find a line contained in $L$ represented by a point of $V$, a contradiction.  Thus $Q_{p}$ must be fixed as $p$ varies over points in $U \cap L$.  Since any pair of general points can be connected by a general codimension $(n-r-1)$ plane, this argument shows that $Q_{p}$ must be fixed as we vary $p$ over all general points of $\mathbb{P}^{n-1}$.  Taking a closure, we see that $V$ must be the set of lines intersecting a fixed codimension $(r+1)$ subscheme $Q$.

Finally, we must compare degrees.  If $Q$ has degree $d$, then the corresponding $V$ has class $m \sigma_{r}$ where
\begin{align*}
m & = V \cdot \sigma_{n-2,n-r} = V \cdot \sigma_{n-2} \cdot \sigma_{1}^{n-r} \\
& = \mathrm{cone}_{p}(Q) \cdot \sigma_{1}^{n-r} = d.
\end{align*}
\end{proof}

\begin{lem}
On  $G(2,n)$ we have $\kappa(\sigma_{r}) = n-2$ for any $1 < r \leq n-2$.
\end{lem}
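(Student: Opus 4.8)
Write $X=G(2,n)$, so $\dim X=2(n-2)$ and $\sigma_r\in N_k(X)$ with $\dim X-k=\codim\sigma_r=r$; thus $\kappa(\sigma_r)=r\cdot\sup\{\rho:\limsup_m\mc(m\sigma_r)/m^\rho>0\}$, and the assertion is equivalent to saying that $\mc(m\sigma_r)$ has polynomial growth rate $m^{(n-2)/r}$. The plan is to prove the two matching bounds by transporting the question to projective space using Lemma \ref{classificationofsigmar}, which identifies an effective cycle of class $m\sigma_r$ --- as far as which lines it contains --- with a codimension $r+1$ subscheme of $\mathbb{P}^{n-1}$ of degree $\le m$.

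\emph{Upper bound.} Let $V=\sum a_i V_i$ be an effective cycle of class $m\sigma_r$. Since $\sigma_r$ spans an extremal ray of $\Eff_k(X)$ (this is standard for Grassmannians), each $[V_i]$ is a positive integer multiple $c_i\sigma_r$, so by Lemma \ref{classificationofsigmar} each $V_i$ is the set of lines meeting a codimension $r+1$ subscheme $Q_i\subset\mathbb{P}^{n-1}$ of degree $c_i$; hence $V$ contains a line $\ell$ precisely when $\ell$ meets $Q:=\bigcup_i Q_i$, a subscheme of codimension $r+1$ and degree $\le m$. Passing to a family of such cycles of maximal mobility count yields a family $p$ of $(n-2-r)$-dimensional subschemes of $\mathbb{P}^{n-1}$ with $\mc(m\sigma_r)=\mc(p;q)$, where $q\colon R\to G(2,n)$ is the universal family of lines in $\mathbb{P}^{n-1}$ --- a family of codimension $n-2$ subschemes whose total space carries the flat projection to $\mathbb{P}^{n-1}$. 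As $n-2>n-2-r$, Lemma \ref{familymcbound} applies and gives $\mc(p;q)\le C_n(m+1)^{(n-2)/((n-2)-(n-2-r))}=C_n(m+1)^{(n-2)/r}$ for a constant $C_n$ depending only on $n$. Therefore $\limsup_m\mc(m\sigma_r)/m^{(n-2)/r}<\infty$, so $\kappa(\sigma_r)\le r\cdot\frac{n-2}{r}=n-2$.

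\emph{Lower bound.} Fix a hyperplane $H_0\cong\mathbb{P}^{n-2}$ in $\mathbb{P}^{n-1}$. For an effective codimension $r$ cycle $Q$ on $H_0$ --- equivalently a cycle of class $(\deg Q)\,h^r$, where $h$ is the hyperplane class of $H_0$ --- let $\sigma_Q\subset X$ be the cycle of lines of $\mathbb{P}^{n-1}$ meeting $Q$ viewed as a codimension $r+1$ cycle of $\mathbb{P}^{n-1}$; its top-dimensional part is the closure of the locus of lines $\ell\not\subset H_0$ with $\ell\cap H_0\in Q$, and by the degree computation in the proof of Lemma \ref{classificationofsigmar} one has $[\sigma_Q]=(\deg Q)\sigma_r$, with $\sigma_Q$ varying in a flat family as $Q$ does. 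Now $h^r$ lies in the interior of the one-dimensional cone $\Eff_{n-2-r}(\mathbb{P}^{n-2})$, hence is big, so by the characterization of big classes recalled in the introduction (following \cite{lehmann16}) we have $\kappa_{\mathbb{P}^{n-2}}(h^r)=n-2$: for every $\rho<(n-2)/r$ there are infinitely many $m$ and families of effective cycles of class $mh^r$ on $H_0$ with mobility count $\ge m^\rho$. Since a general line $\ell$ of $\mathbb{P}^{n-1}$ meets $H_0$ in a single point, which is general in $H_0$, for general lines $\ell_1,\dots,\ell_b$ with $b\le\mc_{\mathbb{P}^{n-2}}(mh^r)$ we may choose a $Q$ of class $mh^r$ passing through the $b$ points $\ell_i\cap H_0$, so that each $\ell_i$ lies on $\sigma_Q$. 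Hence the family of the cycles $\sigma_Q$, all of class $m\sigma_r$, has mobility count $\ge\mc_{\mathbb{P}^{n-2}}(mh^r)$, whence $\limsup_m\mc(m\sigma_r)/m^\rho>0$ for all $\rho<(n-2)/r$ and $\kappa(\sigma_r)\ge r\cdot\frac{n-2}{r}=n-2$.

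\emph{Where the difficulty lies.} The upper bound is essentially bookkeeping once one invokes Lemma \ref{familymcbound}; the only point requiring care is the reduction, via extremality of $\sigma_r$, to a single codimension $r+1$ subscheme of $\mathbb{P}^{n-1}$. The conceptual heart is the lower bound: one must recognize that the rational map $G(2,n)\dashrightarrow\mathbb{P}^{n-2}$ sending a line to its intersection with $H_0$ exhibits $\sigma_r$ as (the pushforward of) the pullback of $h^r$, and that $h^r$ carries maximal mobility on $\mathbb{P}^{n-2}$ simply because the relevant effective cone is a ray; transferring that mobility up to $X$ is then routine given Lemma \ref{classificationofsigmar} and the definition of the mobility count.
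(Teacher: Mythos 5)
Your proof is correct and follows essentially the same route as the paper: the lower bound via a big codimension-$r$ cycle on a fixed hyperplane $H_0\subset\mathbb{P}^{n-1}$ (using that a general line meets $H_0$ in a general point), and the upper bound by converting cycles of class $m\sigma_r$ into codimension-$(r+1)$ subschemes of $\mathbb{P}^{n-1}$ via Lemma \ref{classificationofsigmar} and then applying Lemma \ref{familymcbound} with $q$ the universal family of lines. Your explicit use of the extremality of $\sigma_r$ to reduce to irreducible components is a welcome clarification of a step the paper leaves implicit, but it does not change the argument in substance.
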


\begin{proof}
We first show that this is a lower bound.  Fix a hyperplane $H$ in $\mathbb{P}^{n-1}$.  A general line in $\mathbb{P}^{n-1}$ will intersect $H$ at a general point.  So, any codimension $r$ subvariety of $H$ which contains $b$ general points will also intersect $b$ general lines as a codimension $r+1$ subvariety of $\mathbb{P}^{n-1}$.  A degree $m$ codimension $r$ subvariety $Z$ of $H$ can contain $\approx C m^{n-2/r}$ general points of $H$ for a positive constant $C$.  The set of lines intersecting $Z$ is a cycle on $G(2,n)$ of class $m\sigma_{r}$ going through $\approx C m^{n-2/r}$ general points.  (More precisely, by deforming $Z$ we obtain a family of cycles with the desired mobility count.)  This gives $\kappa(\sigma_{r}) \geq n-2$.

We next show that this is an upper bound.  Lemma \ref{classificationofsigmar} classifies all irreducible cycles whose class is proportional to $\sigma_{r}$.  In particular, an irreducible family of cycles on $G(2,n)$ representing $m\sigma_{r}$ will always be induced (at least over an open subset) by a 
family $p: U \to W$ of codimension $r+1$ degree $m$ subvarieties of $\mathbb{P}^{n-1}$.

We apply Lemma \ref{familymcbound} to $\mathbb{P}^{n-1}$ using for $q$ the family of lines on $\mathbb{P}^{n-1}$.  
We conclude that there is some constant $C$ so that a member of $p$ can meet at most $Cm^{n-2/r}$ general lines.  Thus, the corresponding family of cycles on $G(2,n)$ has mobility count at most $Cm^{n-2/r}$.
\end{proof}

\subsubsection{The classes $\sigma_{r,r-1}$}

We first recall a classical result of \cite{segre1}, \cite{segre2}.

\begin{lem}[\cite{segre2}] \label{classificationofsigmarr-1}
Consider $\sigma_{r,r-1}$ on $G(2,n)$ for some $1 < r < n-2$.  Then any irreducible cycle $V$ of class $m \sigma_{r,r-1}$ parametrizes either:
\begin{itemize}
\item the lines contained in the fibers of a one-dimensional family of $\mathbb{P}^{n-r-1}$s, or
\item the lines contained in a quadric hypersurface in some subplane of $\mathbb{P}^{n-1}$.
\end{itemize}
\end{lem}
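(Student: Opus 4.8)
The statement is classical, going back to Segre \cite{segre2}; here is how I would organize the argument. Since we only care about irreducible cycles, fix an irreducible $V \subset G(2,n)$ with $[V] = m\sigma_{r,r-1}$ for some $m > 0$. Let $S \subset \mathbb{P}^{n-1}$ be the reduced union of the lines parametrized by $V$, and let $I = \{ ([\ell],x) : x \in \ell \}$ be the incidence variety, with its projections $\mathrm{pr} \colon I \to V$ (a $\mathbb{P}^{1}$-bundle) and $\mathrm{ev} \colon I \to S$. For a general $x \in S$, write $F_{x} = \mathrm{pr}(\mathrm{ev}^{-1}(x)) \subset V$ for the family of lines of $V$ through $x$ and $C_{x} = \bigcup_{[\ell] \in F_{x}} \ell \subset S$ for the cone they sweep out with vertex $x$.

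The first step is to fix the numerology, intersecting $V$ with general translates of Schubert varieties and using Kleiman transversality exactly as in the proof of Lemma~\ref{classificationofsigmar}. The vanishing $\sigma_{n-2} \cdot \sigma_{r,r-1} = 0$ shows that no line of $V$ passes through a general point of $\mathbb{P}^{n-1}$, so $S$ is a proper subvariety; computing $[V] \cdot \sigma_{a}$ by the Pieri rule shows that it is nonzero exactly for $a \leq n-r-1$, and matching this against the fact that a general $\mathbb{P}^{\,n-2-a}$ meets $S$ precisely when $a \leq \dim S - 1$ gives $\dim S = n-r$. Hence $\dim F_{x} = \dim I - \dim S = n - r - 2$ (positive since $r < n-2$), and since distinct lines of $V$ through $x$ meet only at $x$, we get $\dim C_{x} = \dim F_{x} + 1 = n - r - 1$: through a general point of $S$ the lines of $V$ sweep out a cone of codimension one in $S$, over an $(n-r-2)$-dimensional family of directions.

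The heart of the argument --- and the step I expect to be the main obstacle --- is the structural dichotomy for these cones: because the $C_{x}$ must all fit into the single $(n-r)$-dimensional variety $S$, which is only one dimension larger, each $C_{x}$ is forced to be either a linear $\mathbb{P}^{n-r-1}$ or a genuine quadric cone. I would extract this from the classical focal (second fundamental form) analysis of the family of lines. Along a general line $\ell$ of the family the focal scheme is a subscheme of $\ell$ of length at most $2$. If it is trivial, then $\ell$ moves inside a fixed linear subspace $\Pi \cong \mathbb{P}^{n-r-1}$ contained in $S$, and as $\ell$ varies these subspaces sweep out a one-parameter family whose sub-Grassmannians $G(2,\Pi)$ assemble (with multiplicity) to $V$ --- the first case of the lemma. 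If instead the focal divisor on each line has length two, the standard consequence is that the tangent spaces to $S$ vary ``quadratically'': $S$ is cut out by quadrics through the lines of $V$, and together with the dimension count $\langle S \rangle = \mathbb{P}^{n-r+1}$ this forces $S$ to be an irreducible quadric hypersurface in $\mathbb{P}^{n-r+1}$ with $V$ its variety of lines $F_{1}(S)$ --- the second case. Alternatively one may cite the modern classification of projective varieties swept out by a family of lines of the maximal possible dimension through the general point.

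To finish, in each case one checks by a direct incidence computation that the resulting cycle is indeed a positive multiple of $\sigma_{r,r-1}$ --- the integer $m$ encoding, in the first case, the degree of the family of $\mathbb{P}^{n-r-1}$'s together with the degree of the map from its total space onto $V$, and in the second case the class of $F_{1}(S)$ --- which completes the classification. The recurring technical nuisance throughout is that intersection cycles such as $[V] \cdot \sigma_{1} = m(\sigma_{r+1,r-1} + \sigma_{r,r})$ are a priori irreducible even though their class is a sum, so one must consistently pass to general members and use transversality to read off the geometry rather than the bare numerical class.
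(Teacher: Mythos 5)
First, a point of comparison: the paper does not prove Lemma~\ref{classificationofsigmarr-1} at all --- it is quoted as a classical theorem of Segre, with no argument given. So there is no ``paper's proof'' to match your proposal against; what you have written is an attempt to reconstruct Segre's theorem. Your preliminary reductions are correct and well organized: the Pieri/Kleiman computation giving $\dim S = n-r$, the count $\dim F_{x} = n-r-2$ for the lines of $V$ through a general point of $S$, and the conclusion that these lines sweep out a cone $C_{x}$ of codimension one in $S$ are all right (including the observation that $C_{x} = \dim F_{x}+1$ because two distinct lines meet in at most one point). This correctly reduces the lemma to the classical classification of varieties of dimension $d$ carrying a $(d-2)$-dimensional family of lines through the general point, which is exactly the content of the cited Segre papers.

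The gap is that this reduction is the easy part, and the dichotomy itself --- the ``heart'' you flag --- is asserted rather than proved. The focal-locus sketch as written does not work in the form stated: $V$ has dimension $2n-2r-3$, which exceeds $\dim S - 1 = n-r-1$ whenever $n-r>2$, so $V$ is not a congruence of lines filling $S$, and the standard statement ``the focal scheme on a general line has length at most two'' does not apply directly; one would first have to pass to a suitable $(n-r-1)$-dimensional subfamily or rework the focal computation for the full family. Likewise, the implications ``trivial focal scheme $\Rightarrow$ the lines through $x$ fill a linear $\mathbb{P}^{n-r-1}$'' and ``focal divisor of length two $\Rightarrow$ $S$ is cut out by quadrics, hence is a quadric hypersurface in its span $\mathbb{P}^{n-r+1}$'' are precisely the nontrivial assertions of Segre's theorem and are given no justification (in particular, the claim $\langle S\rangle = \mathbb{P}^{n-r+1}$ is nowhere established). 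Your fallback --- citing the modern classification of varieties swept out by a maximal-dimensional family of lines --- is legitimate and is essentially what the paper itself does, but then the proposal should be presented as a reduction to that citation rather than as a proof. A small additional imprecision: in the first case you must also argue that $V$ is the \emph{full} family of lines in the fibers $\Pi_{t}$ (this follows from the dimension count $\dim G(2,\Pi_{t}) = \dim V - 1$, forcing the fiber of $V$ over a general $t$ to be all of $G(2,\Pi_{t})$), since the lemma classifies $V$ itself and not merely the surface $S$.
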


To compute the mobility count of $\sigma_{r,r-1}$, it clearly suffices to focus on the first type of cycles.

\begin{lem}
On $G(2,n)$ we have $\kappa(\sigma_{r,r-1}) = 2r$ for any $1 < r < n-2$.
\end{lem}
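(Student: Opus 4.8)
The goal is the equality $\kappa(\sigma_{r,r-1}) = 2r$ on $G(2,n)$ for $1 < r < n-2$. By the remark preceding the lemma, and Lemma \ref{classificationofsigmarr-1}, the relevant cycles to analyze are those of the first type: lines contained in the fibers of a one-dimensional family of $\mathbb{P}^{n-r-1}$'s. I will establish the two inequalities separately.

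\textbf{Lower bound $\kappa(\sigma_{r,r-1}) \geq 2r$.} First I would produce families of effective cycles of class $m\sigma_{r,r-1}$ through many general points. The natural construction: choose a one-parameter family $\{P_{t}\}_{t \in B}$ of $\mathbb{P}^{n-r-1}$'s in $\mathbb{P}^{n-1}$ (where $B$ is a curve), and let $V$ be the locus of lines contained in some $P_{t}$. A point of $G(2,n)$ (a line $\ell$) lies on $V$ iff $\ell \subset P_{t}$ for some $t$. To impose general points of $G(2,n)$ I must force general lines $\ell_{1},\dots,\ell_{b}$ to each lie in some member of the family. A line lies in a $\mathbb{P}^{n-r-1}$ iff two general points of the line do; so I need the family of $\mathbb{P}^{n-r-1}$'s, viewed as a curve in $G(n-r,n)$, to pass (after the appropriate incidence translation) through the configuration determined by each $\ell_i$. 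Counting parameters: the space of such one-dimensional families, together with the freedom in the degree $m$ (controlling how "complicated" the sweeping curve $B$ in the Chow variety of $\mathbb{P}^{n-r-1}$'s is), should give roughly $\sim C m^{2r/(n-2)}$ — wait, I need to recheck the normalization. Since $\sigma_{r,r-1}$ has codimension $(2n-4) - (2n - 2r - 1) = 2r - 1$... actually codimension $r + (r-1) = 2r-1$ in $G(2,n)$ which has dimension $2(n-2)$, so $n-k = 2r-1$ and the target exponent $\kappa/(n-k) = 2r/(2r-1)$. I would build the family so that the mobility count grows like $C\,(\deg)^{2r/(2r-1)}$ by a direct dimension count on the space of one-parameter families of $\mathbb{P}^{n-r-1}$'s of the appropriate degree, analogous to the $\sigma_r$ lower bound but with the extra parameter coming from the moving subplane; then deform to get an actual family and apply the definition.

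\textbf{Upper bound $\kappa(\sigma_{r,r-1}) \leq 2r$.} This is where I expect the main difficulty. By Lemma \ref{irrfamconcentrates} it suffices to bound $\mc(p) \leq C(H^{k}\cdot p)^{2r/(2r-1)}$ for irreducible families $p$ of cycles of class $\preceq m\sigma_{r,r-1}$, and by Lemma \ref{classificationofsigmarr-1} such a family is (over an open set) induced by a family of one-parameter families of $\mathbb{P}^{n-r-1}$'s in $\mathbb{P}^{n-1}$. The idea is to reformulate, exactly as in the proof of Theorem \ref{contractedthrm}: a one-parameter family of $\mathbb{P}^{n-r-1}$'s is a curve in the Grassmannian $G(n-r,n)$, so each member of our family $p$ becomes a curve in $\Chow$-type space, and imposing that a line of $G(2,n)$ lies on the cycle translates into an incidence condition with the corresponding family of subschemes. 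I would then set up a family of subschemes $q$ on the relevant parameter space (the fiber over a point being the subvarieties through that point) and identify $\mc_{G(2,n)}(p)$ with $\mc(\widetilde p; q)$ for an auxiliary family $\widetilde p$ of curves, then invoke Lemma \ref{familymcbound}. The key bookkeeping is: the curves $\widetilde p$ live on a $(\dim G(n-r,n))$-dimensional ambient variety, the subschemes in $q$ have the right codimension $r'$, and $r' - 1 = $ (dimension of curves) with the ratio $r'/(r'-1)$ working out to $2r/(2r-1)$; one must check the degree of $\widetilde p$ against the natural ample class is bounded linearly in $m$. The obstacle is making this incidence-geometry translation precise — in particular confirming that "lines lying in a member of the moving family" is genuinely captured by a flat family of subschemes of the expected codimension, and that the relevant parameter space has exactly the dimension needed for the exponent to come out to $2r/(2r-1)$ rather than something larger. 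If the codimension count is off, the bound would be too weak, so that dimension computation on $G(n-r, n)$ (and the choice of the subscheme family $q$ witnessing it) is the crux.
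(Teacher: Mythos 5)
Your upper bound is the paper's argument: rephrase a cycle of the first type in Lemma \ref{classificationofsigmarr-1} as a curve in $G(n-r,n)$, observe that the locus of $\mathbb{P}^{n-r-1}$'s containing a fixed line is a Schubert variety of class $\sigma_{r,r}$ (codimension $2r$ in $G(n-r,n)$), so that general points of $G(2,n)$ become general members of a homogeneous (hence flat, by equivariance) family of codimension-$2r$ subschemes, and then apply Lemma \ref{familymcbound} with $k=1$ and $r'=2r$ to get the exponent $2r/(2r-1)$. The flatness and degree issues you flag are real but routine, and your bookkeeping ($n-k = 2r-1$, target exponent $2r/(2r-1)$) is correct.

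The gap is in the lower bound. You propose ``a direct dimension count on the space of one-parameter families of $\mathbb{P}^{n-r-1}$'s of the appropriate degree,'' but such a count over all curves in $G(n-r,n)$ does not produce the exponent $2r/(2r-1)$: meeting a general Schubert variety of class $\sigma_{r,r}$ imposes $2r-1$ conditions on a curve, and the dimension of any irreducible family of degree-$m$ curves in $G(n-r,n)$ grows too slowly for a na\"ive parameter count to beat the trivial linear bound $\mc(m\sigma_{r,r-1}) \gtrsim m$, which only gives $\kappa \geq 2r-1$. The missing idea is a concentration trick: fix a $2r$-dimensional complete intersection $Y \subset G(n-r,n)$. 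A general Schubert variety of class $\sigma_{r,r}$ has codimension $2r$ and therefore meets $Y$ in finitely many points, so for curves lying in $Y$ the incidence condition with a general $\sigma_{r,r}$ becomes a point condition on $Y$; since a degree-$m$ curve in the $2r$-fold $Y$ can pass through $\approx Cm^{2r/(2r-1)}$ general points, it meets that many general Schubert varieties, and translating back gives a cycle of class $\approx m\sigma_{r,r-1}$ through $\approx Cm^{2r/(2r-1)}$ general lines. This is the analogue of the fixed hyperplane $H$ in the $\sigma_{r}$ lower bound, and without it your construction does not close.
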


\begin{proof}
We first rephrase the problem.  Note that the locus of $\mathbb{P}^{n-r-1}$s containing a fixed line in $\mathbb{P}^{n-1}$ is a Schubert variety of class $\sigma_{r,r}$ in $G(n-r,n)$.  Given the result of Lemma \ref{classificationofsigmarr-1}, it suffices to show that a curve in $G(n-r,n)$ of degree $m$ intersects at most $\approx C m^{\frac{2r}{2r-1}}$ Schubert varieties of class $\sigma_{r,r}$.

First we show the lower bound.  Fix a dimension $2r$ complete intersection variety $Y$ in $G(n-r,n)$.  Then a general Schubert variety of class $\sigma_{r,r}$ will intersect $Y$ in a finite number of points.  Since a degree $m$ curve  in $Y$ can contain $\approx C m^{2r/2r-1}$ general points of $Y$, we can also find a curve intersecting this many general Schubert varieties of class $\sigma_{r,r}$.

The upper bound follows from Lemma \ref{familymcbound}.
\end{proof}

\subsubsection{The other classes}

\begin{lem} \label{grassmanniancycledim}
Let $V \subset G(2,n)$ be an irreducible cycle with class proportional to $\sigma_{r,s}$ where $1 \leq s < r-1$.  Then the lines parametrized by $V$ sweep out an irreducible subset of $\mathbb{P}^{n-1}$ of codimension $s$.
\end{lem}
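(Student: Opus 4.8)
The plan is to estimate the dimension of the sweep via the standard incidence correspondence associated to $V$. Let $\mathcal{L} = \{(W, x) : W \in V, \ x \in \ell_W\} \subset V \times \mathbb{P}^{n-1}$, where $\ell_W \subset \mathbb{P}^{n-1}$ denotes the line corresponding to a point $W$ of $G(2,n)$. The first projection $\mathcal{L} \to V$ has one-dimensional fibers, so $\dim \mathcal{L} = \dim V + 1 = (2(n-2) - (r+s)) + 1$. The sweep $\Sigma$ of $V$ is the image of the second projection $\pi_2: \mathcal{L} \to \mathbb{P}^{n-1}$, and $\dim \Sigma = \dim \mathcal{L} - f$, where $f$ is the dimension of a general fiber of $\pi_2$; this general fiber over a point $x \in \Sigma$ is (a subvariety of) the set of $W \in V$ whose line passes through $x$. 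Thus the content of the lemma is the equality $\dim \Sigma = n-1-s$, equivalently $f = (2(n-2)-(r+s)) + 1 - (n-1-s) = n - 2 - r + s$. The upper bound $\dim \Sigma \le n-1-s$ (equivalently $f \ge n-2-r+s$) should come from the geometry of $V$: intersecting $V$ with the Schubert cycle $\sigma_{n-2}$ of lines through a fixed general point $p$, which by generality can be computed on the cycle level (invoking \cite{kleiman74} as in the proof of Lemma \ref{classificationofsigmar}), and noting $\sigma_{r,s} \cdot \sigma_{n-2} = \sigma_{n-2, n-2-r} \cdot (\text{something})$ — more precisely, using that the class $V \cdot \sigma_{n-2}$ is $m\sigma_{n-2,r}$ (by the Pieri rule, since $\sigma_{r,s}\cdot\sigma_{n-2}$ can only be nonzero if $s=0$... wait, no) — I would instead argue directly: if $\dim \Sigma$ were $> n-1-s$, then through a general point $x \in \Sigma$ the lines of $V$ through $x$ would form too large a family, and intersecting with a general $\sigma_{n-2,n-r}$-type cycle forces a contradiction with the class. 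The lower bound $\dim \Sigma \ge n-1-s$ should follow because if $\Sigma$ had codimension $> s$, then $V$ would be contained in the (closure of the) Schubert-type variety of lines meeting a smaller linear space, forcing the class of $V$ to be proportional to some $\sigma_{r',s'}$ with $s' > s$, contradicting that the class is $\sigma_{r,s}$ with the stated $s$.

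Concretely, for the upper bound I would intersect $V$ with a general translate of $\sigma_{n-r, n-r}$ — the locus of lines contained in a fixed general $\mathbb{P}^{n-r-1} \subset \mathbb{P}^{n-1}$. Since $\sigma_{r,s} \cdot \sigma_{n-r,n-r} = 0$ when $s < r$ (as $s + (n-r) < n$ fails the requisite bound... actually $\sigma_{r,s}\cdot\sigma_{a,b}=0$ unless $r + b \le n-2$ and $s + a \le n - 2$; here $r + (n-r) = n > n-2$), the cycle-level intersection of $V$ with this translate is empty, so no line of $V$ lies in a general $\mathbb{P}^{n-r-1}$. Dualizing the incidence count: a general $\mathbb{P}^{n-r-1}$ meets a subvariety of $\mathbb{P}^{n-1}$ of dimension $\ge r$ in a nonempty set, and if $\dim\Sigma \ge r$... this is the wrong direction, so instead I track the codimension-$s$ claim via: intersect $V$ with $\sigma_{n-2, n-r}$, the lines through a general point $p$ and meeting a general $\mathbb{P}^{r-1}$; this intersection has class $m\sigma_{n-2,r}\cdot(\dots)$ and being nonzero forces $\Sigma$ to meet the cone over that $\mathbb{P}^{r-1}$ with vertex $p$, which is a variety of dimension $r$; running $p$ over $\mathbb{P}^{n-1}$ shows $\Sigma$ has dimension $\ge$... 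I will settle the exact numerology in the writeup.

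The main obstacle I anticipate is pinning down precisely which Schubert intersection numbers vanish and which do not, i.e. making the Pieri-rule bookkeeping exactly match the asserted codimension $s$ (rather than $s-1$ or $s+1$), and ensuring in each case that the intersections can legitimately be performed on the cycle level by Kleiman transversality — the generality hypotheses need to be checked against irreducibility of $V$, not just of a general Schubert translate. A secondary subtlety is that $V$ is only assumed to have class \emph{proportional} to $\sigma_{r,s}$, so multiplicities $m$ will appear throughout; these do not affect the dimension count of $\Sigma$ but must be carried carefully through the intersection-theoretic steps. Once the dimension of the general fiber $f$ of $\pi_2$ is computed, irreducibility of $\Sigma$ is automatic since $\mathcal{L}$ is irreducible (as $V$ is and the fibration $\mathcal{L}\to V$ has irreducible fibers), and its image under $\pi_2$ is therefore irreducible as well.
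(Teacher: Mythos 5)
There is a genuine gap: you set up the right framework (Kleiman transversality plus Schubert calculus applied to the sweep $\Sigma$), but you never identify the Schubert classes that actually detect the codimension of $\Sigma$, and you explicitly defer ``the exact numerology'' at the decisive moment. The classes you propose intersecting with --- $\sigma_{n-2}$ (lines through a point), $\sigma_{n-r,n-r}$ (lines in a fixed $\mathbb{P}^{n-r-1}$), $\sigma_{n-2,n-r}$ --- all probe the index $r$, not $s$, so they cannot pin down the asserted codimension $s$. The paper's proof is a two-line Schubert computation with the \emph{special} classes indexed by $s$: one has $\sigma_{r,s}\cdot\sigma_{n-s-1}=0$ while $\sigma_{r,s}\cdot\sigma_{n-s-2}\neq 0$ (Pieri: $\sigma_{r,s}\cdot\sigma_{n-s-2}=\sigma_{n-2,r}$, whereas $\sigma_{r,s}\cdot\sigma_{n-s-1}$ vanishes since $s+(n-s-1)>n-2$). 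Since $\sigma_{k}$ is the class of lines meeting a fixed linear subspace of dimension $n-k-2$, Kleiman transversality converts the vanishing into: a general $(s-1)$-plane meets no line of $V$, hence misses $\Sigma$, giving $\codim\Sigma\geq s$; and converts the nonvanishing into: a general $s$-plane meets some line of $V$, hence meets $\Sigma$, giving $\codim\Sigma\leq s$. Your sketch of the lower bound on $\dim\Sigma$ (``$V$ would be contained in a Schubert-type variety, forcing the class to be some $\sigma_{r',s'}$ with $s'>s$'') is also not a valid inference as stated --- containment in a Schubert variety does not force the class of $V$ to be proportional to a single Schubert class; the correct argument goes directly through the nonvanishing intersection number as above.

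Two smaller points. Your incidence-correspondence bookkeeping is fine and your observation that irreducibility of $\Sigma$ follows from irreducibility of the incidence variety over $V$ is correct (the paper dismisses this with ``clearly''), but note that you do not actually need to compute the general fiber dimension $f$ of $\pi_{2}$: the linear-subspace test above bounds $\codim\Sigma$ from both sides directly. Also, the proportionality constant $m$ is indeed harmless here, since only the vanishing or nonvanishing of the intersection numbers matters.
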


\begin{proof}
Clearly the lines sweep out an irreducible subset.  We have $\sigma_{r,s} \cdot \sigma_{n-s-1} = 0$ but $\sigma_{r,s} \cdot \sigma_{n-s-2} \neq 0$.  Noting that the Schubert cycle of type $\sigma_{k}$ parametrizes lines intersecting a fixed dimension $n-k-2$ linear subspace and using transversality of general intersections, we obtain the result.
\end{proof}

\begin{lem}
Consider $\sigma_{r,s}$ where $1 \leq s < r-1$.  Let $V$ be a cycle of class $m \sigma_{r,s}$ and let $Z$ denote the image in $\mathbb{P}^{n-1}$ of the universal family over $V$.  Suppose that $Z$ is irreducible.  Then either
\begin{itemize}
\item $Z$ is contained in a hyperplane, or
\item there is a unique fixed $n-2-r$ dimensional hyperplane $Q$ such that every line parametrized by $V$ intersects $Q$, and furthermore for every point $q$ of $Q$ there exists a  $\geq (n-2-s)$-dimensional subfamily of $V$ which parametrizes lines through $q$.
\end{itemize}
\end{lem}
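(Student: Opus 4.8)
The plan is to mirror the proof of Lemma \ref{classificationofsigmar}, extracting geometric data from $V$ by intersecting with auxiliary Schubert cycles and using Kleiman transversality to perform these intersections on the level of cycles. We may assume $V$ is reduced and pure of dimension $\dim \sigma_{r,s} = 2(n-2) - r - s$; since $Z$ is irreducible every component of $V$ sweeps out all of $Z$, so it suffices to treat the case that $V$ is irreducible. If $Z$ spans a proper linear subspace of $\mathbb{P}^{n-1}$ we are in the first alternative, so from now on assume $Z$ is nondegenerate.

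First I would analyze the lines of $V$ through a general point of $Z$. Let $\mathcal{U} \subseteq V \times \mathbb{P}^{n-1}$ be the universal line and $e : \mathcal{U} \to \mathbb{P}^{n-1}$ the evaluation map, so $e$ has image $Z$ and $\dim \mathcal{U} = \dim V + 1$. By Lemma \ref{grassmanniancycledim}, $\dim Z = n - 1 - s$, so for general $p \in Z$ the fiber $C_p := e^{-1}(p)$ has dimension $n - 2 - r$. Since the lines of $C_p$ all pass through $p$, their union is a cone $\mathrm{cone}_p(Q_p)$ over a subscheme $Q_p$ of dimension $n-2-r$; viewing $C_p$ inside the $\mathbb{P}^{n-2}$ of lines through $p$ (a Schubert variety of class $\sigma_{n-2}$), its class in $G(2,n)$ is $\delta \sigma_{n-2,r}$ with $\delta = \deg Q_p$. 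The key rigidity input, exactly as in Lemma \ref{classificationofsigmar}, is that $\sigma_{r,s} \cdot \sigma_{n-r-1,n-r-1} = 0$: the nonvanishing criterion fails since $r + (n-r-1) > n - 2$. Hence for a general linear subspace $L \cong \mathbb{P}^{r}$, no line of $V$ is contained in $L$.

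The heart of the argument is to promote the $Q_p$ to a single fixed linear space $Q$. As in the second half of the proof of Lemma \ref{classificationofsigmar}, I would argue that if the subschemes $Q_p$ varied in a positive-dimensional family as $p$ moves along a general curve in $Z$ — for instance a component of $Z \cap L$, which has positive dimension since $\dim Z + \dim L - (n-1) = r - s > 0$ — then the union of the $Q_p$ over such a curve would meet $L$, producing after taking closures a line of $V$ contained in $L$ and contradicting the rigidity above. Connecting general points of the irreducible variety $Z$ by chains of such curves then shows $Q_p$ is independent of $p$, and taking closures shows that every line of $V$ meets the resulting fixed subscheme $Q \subseteq Z$. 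To see that $Q$ is a linear $\mathbb{P}^{n-2-r}$ — the new feature compared with Lemma \ref{classificationofsigmar}, forced here by $s \geq 1$ — and to fix its dimension, I would run the degree comparison from the end of that proof: intersecting $[V] = m\sigma_{r,s}$ against complementary Schubert classes controls $\deg Q$ and should force $\delta = 1$, so that each $Q_p$, and hence $Q$, is linear of dimension $n-2-r$. Uniqueness follows from a dimension count: a second such linear space would force $C_p$ for general $p$ to lie in the intersection of two $(n-1-r)$-dimensional cones with common vertex $p$, which is impossible once $r \geq 2$.

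Finally I would record the fiber statement over $Q$. A general line of $V$ is not contained in the linear space $Q$, hence meets it in a single point, giving a dominant rational map $V \dashrightarrow Q$; resolving its indeterminacy on an irreducible complete model $\tilde{V} \to V$, and using that a general point of $Q$ lies on a line of $V$ meeting $Q$ only there, the induced proper map $\tilde{V} \to Q$ has closed dense image and so is surjective, so its general fiber has dimension $\dim V - \dim Q = n-2-s$. Upper semicontinuity of fiber dimension then forces every fiber to have dimension $\geq n-2-s$, and since the fiber over $q$ maps to the family of lines of $V$ through $q$, that family has dimension $\geq n-2-s$ for every $q \in Q$. I expect the main obstacles to be, first, making the rigidity step precise — in particular controlling the scheme structure of the $Q_p$ and the dimension of the family they sweep out — and, second, the intersection-theoretic bookkeeping that forces $Q$ to be linear of the claimed dimension, which is where the hypothesis $s < r-1$ (separating $\sigma_{r,s}$ from the classes $\sigma_{r,r-1}$ and $\sigma_{r,r}$ handled by other methods) should enter.
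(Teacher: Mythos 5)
Your overall architecture differs from the paper's: the paper proves this lemma by decreasing induction, cutting $V$ by the Schubert cycle $\sigma_{1,1}$ of lines contained in a general hyperplane $H$ (so that $V' = V \cap \sigma_{1,1}(H)$ is again a cycle of the same type one dimension down), invoking \cite[Theorem 5]{bryant05} as the base case $r = n-2$, and then assembling the planes $Q_H$ supplied by the inductive hypothesis into a single plane $Q$ using the vanishing $\sigma_{r,s}\cdot\sigma_{n-3-s,n-1-r}=0$. Your plan instead analyzes the fibers $C_p$ of the evaluation map directly, in the style of Lemma \ref{classificationofsigmar}. Several of your steps are sound (the dimension count $\dim C_p = n-2-r$, the vanishing $\sigma_{r,s}\cdot\sigma_{n-r-1,n-r-1}=0$, and the constancy argument via the projective dimension theorem), but there is a genuine gap at the step you yourself flag: showing that $Q$ is a \emph{linear} space of dimension $n-2-r$, equivalently that $\delta = \deg Q_p = 1$.

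The proposed fix --- ``intersecting $[V]=m\sigma_{r,s}$ against complementary Schubert classes'' --- cannot work, because for $s \geq 1$ the cone $C_p$ is the fiber of the evaluation map over a point $p$ of the \emph{proper} subvariety $Z \subsetneq \mathbb{P}^{n-1}$. It is therefore not a Kleiman-general intersection of $V$ with $\sigma_{n-2}(p)$, and its class is not computed by the product $[V]\cdot\sigma_{n-2}$; indeed $\sigma_{r,s}\cdot\sigma_{n-2}=0$ for $s\geq 1$ (no line of $V$ passes through a general point of $\mathbb{P}^{n-1}$), so there is no complementary-class computation available to control $\deg Q_p$. This is exactly where the case $s\geq 1$ diverges from Lemma \ref{classificationofsigmar}, and it is the role that Bryant's classification of cycles of class $m\sigma_{n-2,r}$ plays in the paper's induction: linearity of $Q$ is imported from the base case rather than deduced from intersection numbers. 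Your uniqueness argument is also incomplete as stated: two distinct linear spaces $Q, Q'$ spanning the same $\mathbb{P}^{n-1-r}$ together with $p$ would each be met by every line of the cone $C_p$, so ruling this out requires the further observation that all lines of $V$ would then lie in a fixed $\mathbb{P}^{n-1-r}$, contradicting $\dim Z = n-1-s > n-1-r$. Unless you can supply an independent proof that $Q_p$ is linear (essentially the content of the base case you would otherwise need), the argument does not close.
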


\begin{proof}
The proof is by decreasing induction on $r$.  For $r = n-2$, this is proved by \cite[Theorem 5]{bryant05}.  (Note that both conditions $n-3 > s$ and $s \geq 1$ are necessary for the base case.)

We now consider the case $r < n-2$.  By Lemma \ref{grassmanniancycledim} $Z$ is irreducible of dimension $n-1-s$.

Choose a general hyperplane $H$ of $\mathbb{P}^{n-1}$ and the corresponding Schubert cycle $\sigma_{1,1}$.  By generality the intersection of $\sigma_{1,1}$ with $V$ can be done set theoretically to obtain a cycle $V'$.  Let $Z'$ denote the closed subset of $\mathbb{P}^{n-1}$ swept out by the lines parametrized by $V'$. Applying Lemma \ref{grassmanniancycledim} to components of $V'$ we see that each component of $Z'$ has dimension $n-s-2$.  Since $Z' \subset Z \cap H$ and this latter set is irreducible by Bertini, by dimension considerations we must have $Z' = Z \cap H$  and so $Z'$ is irreducible. 

By induction we see that either $Z'$ is contained in a hyperplane in $\mathbb{P}^{n-1}$ or every line parametrized by $V'$ intersects a fixed $n-3-r$ dimensional hyperplane $Q_{H}$ (necessarily contained in $H$).

In the first case, since $H$ is general we see that $Z$ is contained in a hyperplane of $\mathbb{P}^{n-1}$.

In the second case, consider the family of codimension $1$ hyperplanes $\widehat{H}$ in $\mathbb{P}^{n-1}$ containing $Q_{H}$.  As  the general such $\widehat{H}$ varies, we obtain a varying family of $Q_{\widehat{H}}$.  We claim that the $Q_{\widehat{H}}$ all coincide with $Q_{H}$.  Indeed, any line parametrized by $V'$ which also intersects some other point of $\widehat{H}$ is contained in $\widehat{H}$.  Since $s < r-1 < (n-2)-1$, the existence part of the inductive assumption yields through any point of $Q_{H}$ a $(n-4-s)$-dimension worth of lines contained in both $H$ and $\widehat{H}$.  Thus there is at least one line contained in $\widehat{H}$ through any point of $Q_{H}$, and by the uniqueness part of the inductive assumption, we must have $Q_{\widehat{H}} = Q_{H}$.

Finally note that as $H$ varies over general hyperplanes, the $Q_{H}$ are all hyperplane sections of a fixed $n-2-r$ dimensional plane $Q$.  Indeed, since $\sigma_{r,s} \cdot \sigma_{n-3-s,n-1-r} = 0$, we see that $V$ must not intersect a general Schubert variety of the latter class.  Such a variety parametrizes lines contained in a general $r$-dimensional hyperplane $L$ and intersecting an $s+1$-dimensional subplane $M$ of $L$.  If the $Q_{H}$ varied to cover a variety of dimension $n-1-r$, then some $Q_{H}$ would intersect $L$, and since through each point of $Q_{H}$ we have at least an $(n-3-s)$-dimensional family of lines there would be a line through that point also intersecting $M$, a contradiction.  Thus the union of all the $Q_{H}$ must have dimension $n-2-r$.  This union is obviously then a plane $Q$.

Finally we verify the two desired properties of $Q$ by induction.  Since $Q$ is the closure of the union of the $Q_{H}$, there is a line parametrized by $V$ through every point of $Q$.  If $Q$ were not unique, then a general hyperplane section would violate the inductive hypothesis.  Finally, an easy dimension count and induction argument verifies the existence part of the inductive assumption.
\end{proof}

For $r \geq 1$ there is an upper bound on the number of general lines which intersect any fixed $n-2-r$-dimensional hyperplane in $\mathbb{P}^{n-1}$.  Furthermore, the number of general lines contained in any degenerate subvariety is bounded above.  Thus we immediately obtain:

\begin{cor}
On $G(2,n)$ we have $\kappa(\sigma_{r,s}) = r+s$ whenever $1 \leq s < r-1$.
\end{cor}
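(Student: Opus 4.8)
The lower bound $\kappa(\sigma_{r,s})\ge r+s$ is automatic: $\sigma_{r,s}$ is effective, and since $G(2,n)$ is homogeneous one can place $m$ general $\mathrm{PGL}_{n}$-translates of the Schubert variety through $m$ general points, so $\mc(m\sigma_{r,s})\ge m$; writing $k$ for the dimension of the class $\sigma_{r,s}$ (so that $\dim G(2,n)-k=r+s$), the inequality $\kappa(\alpha)\ge n-k$ for effective classes then gives $\kappa(\sigma_{r,s})\ge r+s$. So all the work is in the reverse inequality, and the plan is to rule out super-linear growth of $\mc(m\sigma_{r,s})$. Suppose for contradiction that $\kappa(\sigma_{r,s})>r+s$. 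Then Lemma~\ref{irrfamconcentrates}, applied with an ample divisor $H$ and with the free constant $C$ taken as large as we wish, produces, for some $m$, an irreducible family $p$ of $k$-cycles with $m\sigma_{r,s}-[p]$ effective and $\mc(p)>C\,(H^{k}\cdot p)^{\kappa(\sigma_{r,s})/(r+s)-\epsilon}$. I will contradict this by bounding $\mc(p)$ by a constant $C_{0}=C_{0}(n,r)$ independent of $m$ and of $H^{k}\cdot p$.

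The first step is a reduction to classes proportional to $\sigma_{r,s}$. On $G(2,n)$ the Poincar\'e dual of the Schubert class $\sigma_{(\lambda_{1},\lambda_{2})}$ is again a Schubert class, namely $\sigma_{(n-2-\lambda_{2},\,n-2-\lambda_{1})}$; since a general $\mathrm{PGL}_{n}$-translate of a Schubert variety meets any fixed effective cycle properly \cite{kleiman74}, pairing an effective class against these dual Schubert classes shows that every effective $k$-cycle class on $G(2,n)$ is a non-negative combination of Schubert classes. As the Schubert classes of a fixed codimension are linearly independent, $\sigma_{r,s}$ spans an extremal ray of $\Eff_{k}(G(2,n))$, so the effective class $[p]\preceq m\sigma_{r,s}$ must be of the form $c\,\sigma_{r,s}$ with $1\le c\le m$. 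Hence a general member $V$ of $p$ is an irreducible cycle of class $c\,\sigma_{r,s}$, its universal family has irreducible image $Z\subset\mathbb{P}^{n-1}$, and so the structure lemma proved just above applies to $V$.

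It remains to check that such a $V$ contains at most $C_{0}$ lines that are general points of $G(2,n)$. In the degenerate alternative $Z$ lies in a hyperplane, so any lines of $V$ lie in a common $\mathbb{P}^{n-2}$; but $b$ general lines of $\mathbb{P}^{n-1}$ span a projective space of dimension $\min\{2b-1,\,n-1\}$, which forces $b\le (n-1)/2$. In the other alternative every line of $V$ meets a fixed $(n-2-r)$-plane $Q$; the locus of $(n-2-r)$-planes of $\mathbb{P}^{n-1}$ meeting a fixed line has codimension $r$ in the Grassmannian $\mathbb{G}$ of such planes, and $\dim\mathbb{G}=(n-1-r)(r+1)$, so by Kleiman transversality the locus of planes meeting $b$ general lines is empty once $br>\dim\mathbb{G}$, whence $b\le (n-1-r)(r+1)/r$. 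Putting $C_{0}=\max\{(n-1)/2,\,(n-1-r)(r+1)/r\}$, no member of $p$ contains more than $C_{0}$ general points, so $\mc(p)\le C_{0}$. Since $H^{k}\cdot p=c\,(H^{k}\cdot\sigma_{r,s})\ge H^{k}\cdot\sigma_{r,s}>0$, choosing $C$ large enough in Lemma~\ref{irrfamconcentrates} forces $\mc(p)>C_{0}$, a contradiction. Therefore $\kappa(\sigma_{r,s})\le r+s$, and with the lower bound this proves the corollary.

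\textbf{The main obstacle.} The geometric heart — the structure lemma forcing the lines of $V$ onto a fixed $(n-2-r)$-plane, or into a hyperplane — is already in hand, so what remains is bookkeeping. The point needing the most care is the extremality of $\sigma_{r,s}$ in $\Eff_{k}(G(2,n))$ (used to pass from $[p]\preceq m\sigma_{r,s}$ to $[p]$ proportional to $\sigma_{r,s}$), which rests on the self-duality of the Schubert basis under Poincar\'e duality on $G(2,n)$ together with Kleiman transversality. Secondarily, one must make sure that the dimension counts for ``$(n-2-r)$-planes meeting $b$ general lines'' and for ``$b$ general lines lying in a hyperplane'' are genuinely governed by the expected dimension — again an instance of Kleiman transversality on the homogeneous space $G(2,n)$.
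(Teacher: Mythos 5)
Your proof is correct and follows essentially the same route as the paper: the structure lemma forces the lines of each irreducible member either into a hyperplane or through a fixed $(n-2-r)$-plane, and in either case an incidence/dimension count bounds the number of general lines by a constant, giving linear growth of the mobility count. The only difference is bookkeeping — you route the reduction to irreducible members through Lemma~\ref{irrfamconcentrates} and spell out the extremality of $\sigma_{r,s}$ in $\Eff_{k}(G(2,n))$ and the explicit constants, all of which the paper leaves implicit in its one-line deduction.
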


\bibliographystyle{amsalpha}
\bibliography{iitakadim}

\newcommand{\etalchar}[1]{$^{#1}$}
\providecommand{\bysame}{\leavevmode\hbox to3em{\hrulefill}\thinspace}
\providecommand{\MR}{\relax\ifhmode\unskip\space\fi MR }
\providecommand{\MRhref}[2]{%
  \href{http://www.ams.org/mathscinet-getitem?mr=#1}{#2}
}
\providecommand{\href}[2]{#2}
\begin{thebibliography}{BCE{\etalchar{+}}02}

\bibitem[BCE{\etalchar{+}}02]{8authors}
Thomas Bauer, Fr{\'e}d{\'e}ric Campana, Thomas Eckl, Stefan Kebekus, Thomas
  Peternell, S{\l}awomir Rams, Tomasz Szemberg, and Lorenz Wotzlaw, \emph{A
  reduction map for nef line bundles}, Complex geometry ({G}\"ottingen, 2000),
  Springer, Berlin, 2002, pp.~27--36.

\bibitem[Bry05]{bryant05}
Robert Bryant, \emph{Rigidity and quasi-rigidity of extremal cycles
  in{H}ermitian symmetric spaces}, Princeton University Press Ann. Math.
  Studies AM-153, 2005.

\bibitem[Cam81]{campana81}
F.~Campana, \emph{Cor{\'e}duction alg{\'e}brique d'un espace analytique
  faiblement {K}{\"a}hl{\'e}rien compact}, Inv. Math. \textbf{63} (1981),
  no.~2, 187--223.

\bibitem[CC15]{cc15}
Dawei Chen and Izzet Coskun, \emph{Extremal higher codimension cycles on moduli
  spaces of curves}, Proc. Lond. Math. Soc. (3) \textbf{111} (2015), no.~1,
  181--204. \MR{3404780}

\bibitem[Cos11]{coskun11}
Izzet Coskun, \emph{Rigid and non-smoothable {S}chubert classes}, J.
  Differential Geom. \textbf{87} (2011), no.~3, 493--514.

\bibitem[Cos14]{coskun14}
\bysame, \emph{Rigidity of {S}chubert classes in orthogonal {G}rassmannians},
  Israel J. Math. \textbf{200} (2014), no.~1, 85--126.

\bibitem[CR13]{cr13}
Izzet Coskun and Colleen Robles, \emph{Flexibility of {S}chubert classes},
  Differential Geom. Appl. \textbf{31} (2013), no.~6, 759--774.

\bibitem[FKL16]{fkl15}
Mihai Fulger, J\'anos Koll\'ar, and Brian Lehmann, \emph{Volume and {H}ilbert
  function of {$\Bbb R$}-divisors}, Michigan Math. J. \textbf{65} (2016),
  no.~2, 371--387.

\bibitem[FL16]{fl15}
Mihai Fulger and Brian Lehmann, \emph{Morphisms and faces of pseudo-effective
  cones}, Proc. Lon. Math. Soc. \textbf{112} (2016), no.~4, 651--676.

\bibitem[FL17a]{fl14}
Mihai Fulger and Brian Lehmann, \emph{Positive cones of dual cycle classes},
  Algebr. Geom. \textbf{4} (2017), no.~1, 1--28.

\bibitem[FL17b]{fl13}
\bysame, \emph{Zariski decompositions of numerical cycle classes}, J. Algebraic
  Geom. \textbf{26} (2017), no.~1, 43--106.

\bibitem[Ful84]{fulton84}
William Fulton, \emph{Intersection theory}, Ergebnisse der Mathematik und ihrer
  Grenzgebiete (3) [Results in Mathematics and Related Areas (3)], vol.~2,
  Springer-Verlag, Berlin, 1984.

\bibitem[Hon05]{hong05}
Jaehyun Hong, \emph{Rigidity of singular {S}chubert varieties in {${\rm
  Gr}(m,n)$}}, J. Differential Geom. \textbf{71} (2005), no.~1, 1--22.

\bibitem[Hon07]{hong07}
\bysame, \emph{Rigidity of smooth {S}chubert varieties in {H}ermitian symmetric
  spaces}, Trans. Amer. Math. Soc. \textbf{359} (2007), no.~5, 2361--2381.

\bibitem[Iit70]{iitaka70}
Shigeru Iitaka, \emph{On {$D$}-dimensions of algebraic varieties}, Proc. Japan
  Acad. \textbf{46} (1970), 487--489.

\bibitem[Iit71]{iitaka71}
\bysame, \emph{On {$D$}-dimensions of algebraic varieties}, J. Math. Soc. Japan
  \textbf{23} (1971), 356--373.

\bibitem[Kle74]{kleiman74}
Steven~L. Kleiman, \emph{The transversality of a general translate}, Compos.
  Math. \textbf{28} (1974), no.~3, 287--297.

\bibitem[KMM92]{kmm92}
J{\'a}nos Koll{\'a}r, Yoichi Miyaoka, and Shigefumi Mori, \emph{Rationally
  connected varieties}, J. Algebraic Geom. \textbf{1} (1992), no.~3, 429--448.

\bibitem[Laz04]{lazarsfeld04}
Robert Lazarsfeld, \emph{Positivity in algebraic geometry. {I}-{II}},
  Ergebnisse der Mathematik und ihrer Grenzgebiete. 3. Folge. A Series of
  Modern Surveys in Mathematics [Results in Mathematics and Related Areas. 3rd
  Series. A Series of Modern Surveys in Mathematics], vol.~48, Springer-Verlag,
  Berlin, 2004, Classical setting: line bundles and linear series.

\bibitem[Leh16]{lehmann16}
Brian Lehmann, \emph{Volume-type functions for numerical cycle classes}, Duke
  Math. J. \textbf{165} (2016), no.~16, 3147--3187. \MR{3566200}

\bibitem[Rob13]{robles13}
C.~Robles, \emph{Schur flexibility of cominuscule {S}chubert varieties}, Comm.
  Anal. Geom. \textbf{21} (2013), no.~5, 979--1013.

\bibitem[RT12]{rt12}
C.~Robles and D.~The, \emph{Rigid {S}chubert varieties in compact {H}ermitian
  symmetric spaces}, Selecta Math. (N.S.) \textbf{18} (2012), no.~3, 717--777.

\bibitem[Seg48a]{segre1}
Beniamino Segre, \emph{Sulle {$V_n$} contenenti pi\`u di {$\infty^{n-k}S_k$}.
  {I}}, Atti Accad. Naz. Lincei. Rend. Cl. Sci. Fis. Mat. Nat. (8) \textbf{5}
  (1948), 193--197.

\bibitem[Seg48b]{segre2}
\bysame, \emph{Sulle {$V_n$} contenenti pi\`u di {$\infty^{n-k}S_k$}. {II}},
  Atti Accad. Naz. Lincei. Rend. Cl. Sci. Fis. Mat. Nat. (8) \textbf{5} (1948),
  275--280.

\bibitem[Voi10]{voisin10}
Claire Voisin, \emph{Coniveau 2 complete intersections and effective cones},
  Geom. Funct. Anal. \textbf{19} (2010), no.~5, 1494--1513.

\bibitem[Wal97]{walters97}
Maria~F. Walters, \emph{Geometry and uniqueness of some extreme subvarieties in
  complex {G}rassmannians}, ProQuest LLC, Ann Arbor, MI, 1997, Thesis
  (Ph.D.)--University of Michigan.

\end{thebibliography}

\end{document}